\newcommand{\ul}{\underline}
\newcommand{\pr}{\mathbb{P}}
\newcommand{\E}{\mathbb{E}}
\renewcommand{\d}{\mathrm{d}}
\newcommand{\1}[1]{{\mathds{1}}_{\left\{#1\right\}}}
\newcommand{\K}{\mathcal{K}}
\newcommand{\N}{\mathcal{N}}
\newcommand{\HH}{\mathcal{H}}
\renewcommand{\S}{\mathcal{S}}
\newtheorem{theorem}{Theorem}
\newtheorem{remark}{Remark}
\newtheorem{lemma}{Lemma}
\newtheorem*{lemma*}{Lemma}
\newtheorem{prop}{Proposition}
\DeclareMathOperator{\cov}{cov}
\newcommand{\rem}[1]{{#1}}
\newcommand{\remm}[1]{{#1}}
\title{Accuracy criterion for mean field approximations of Markov processes on hypergraphs
	\footnote{Supported by the \'UNKP-21-1 New National Excellence Program of the Ministry for Innovation and Technology from the source of the National Research, Development and Innovation Fund. 
		\includegraphics[width=0.05\textwidth]{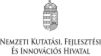}
		\includegraphics[width=0.05\textwidth]{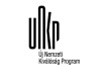}
	} 
	\footnote{ Partially supported by the ERC Synergy under Grant No. 810115 - DYNASNET.}
}
\author{
	D\'aniel Keliger\\
	{\small Department of Stochastics, Institute of Mathematics,}\\
	{\small Budapest University of Technology and Economics}\\
	{\small M\H{u}egyetem rkp. 3., H-1111 Budapest, Hungary;
	}\\
	{\small Alfr\'ed R\'enyi Institute of Mathematics, Budapest, Hungary}\\
	{\small e-mail: perfectumfluidum@gmail.com} \\[3mm]
	Ill\'es Horv\'ath\\
	{\small MTA-BME Information Systems Research Group}\\
	{\small e-mail: horvath.illes.antal@gmail.com}\\[3mm]
}
\begin{document}
	\maketitle
	
	\begin{abstract}
		We provide error bounds for the $N$-intertwined mean-field approximation (NIMFA) for local density-dependent Markov population processes with a well-distributed underlying network structure showing NIMFA being accurate when a typical vertex has many neighbors. The result justifies some of the most common approximations used in epidemiology, statistical physics and opinion dynamics literature under certain conditions. We allow interactions between more than 2 individuals, and an underlying hypergraph structure accordingly.
	\end{abstract}
	
	\section{Introduction}
	
	The analysis of stochastic population processes is an important topic in several disciplines, such as epidemiology, biology, economics or computer systems \cite{Bruneo2011, Bakhshi2010a,hht2014,Caravagna2011,Schlicht2008}. Such processes consist of a large number of interacting individuals (agents) that execute random actions based on the behavior of other individuals.
	
	A widely-used framework is Markov population processes, where each individual is in a local state from a fixed, finite state space, and can change their state in a Markovian manner. For such models, the state space increases exponentially with the population size, making an exact analysis infeasible even for moderate population sizes, instead raising the question of good approximations as the next best thing.
	
	The classical result of Kurtz \cite{kurtz70,kurtz78} is based on two main assumptions: that each individual can observe the entire population, and that the Markovian transition rates of each individual depend on the observation in a density-dependent manner. The conclusion is that, as the number of individuals diverges, the evolution of the stochastic system converges to a deterministic mean-field limit. This limit is straightforward to compute numerically, and can serve as a good approximation of the stochastic system when the number of individuals is large. The mean-field limit of Kurtz is referred to as the \emph{homogeneous mean-field approximation} in the present paper.
	
	While the density-dependent Markov setting is flexible and covers many potential applications, the assumption that each individual can observe the entire population is very restrictive. In many population processes arising from real-life examples, individuals do not have full information about the entire population; instead, each individual can observe only a subset of the population. This information structure can be described by a network topology, where each individual has interactions only with its neighbors according to that topology.
	
	The $N$-intertwined mean field approximation (NIMFA) \cite{NIMFA2011} is a quenched mean-field approximation, where differential equations are considered for each individual based on their expected evolution. NIMFA is a deterministic process different from the homogeneous mean-field approximation that incorporates the network structure naturally, making it a potentially more accurate approximation. On the flip side, the computational complexity is considerably increased compared to the homogeneous mean-field approximation; nevertheless, it remains tractable for population sizes large enough to make it relevant for practical applications. Unfortunately, unlike for homogeneous systems, the justification for using NIMFA is poorly understood, mostly relying on numerical evidence \cite{NIMFA_accuracy,NIMFA_accuracy2} along with a few theoretical results \cite{simon2017NIMFA,Sridhar_Kar1, Sridhar_Kar2,age_structured}.
	
	In the present paper, we focus on a specific class of Markov processes dubbed \emph{local density-dependent Markov population processes}, which preserves the density-dependent assumption of Kurtz, but allows an underlying network structure that dictates the environments observed by each individual. This setting covers many of the frequently used stochastic models, such as the SIS process in epidemiology \cite{simplicial1, simplicial2, simplicial3, simplicial4}, Glauber dynamics in statistical physics \cite{Glauber_original,Glauber_ODE}, or the voter model and majority vote in opinion dynamics \cite{voter_majority,voter_adaptive}.   We incorporate interactions between more than 2 vertices into the model with an underlying hypergraph structure accordingly to reflect on some recent developments in the theory of higher order interactions.  
	
	We provide general error bounds for NIMFA that are strong on well-distributed networks. Furthermore, under additional homogeneity assumptions, such as annealed or activity driven networks \cite{annealed, activity1} we show these error bounds to be small, with the added benefit of further reducing the number of equations to other well-known approximations, like the \emph{heterogenous mean field approximation} \cite{vesp}. Finally, we elaborate the on the argument given by K. Devriendt and P. Van Mieghem \cite{unified_meanfield} and show that Szemer\'edi's regularity lemma \cite{Szemeredi} can be applied to reduce the number of equations (depending on a given $\varepsilon$ error).
	
	The rest of the paper is structured as follows. Section \ref{s:setup} introduces basic notation and 
	setup for
	density-dependent Markov population processes along with examples of models used in the literature to illustrate these concepts and their applicability. Section \ref{s:errorbounds} states the main results and also relates them to the recent work of Sridhar and Kar \cite{Sridhar_Kar1, Sridhar_Kar2} and Parasnis et al. \cite{age_structured}. Section \ref{s:red} discusses further reductions of NIMFA 
	to more simple approximations used throughout the literature. Section \ref{s:Discussion} contains a summary of this paper along with the limitations of these results and possible directions for further research.

	Finally, proofs are contained in Section \ref{s:proofs}.
	
	\section{Setup}
	\label{s:setup}
	
	\subsection{The underlying hypergraph}
	
	Let $G$ be a finite hypergraph on $N$ vertices. The vertex set is labeled $[N]=\{1, \dots, N\}$. The hypergraph is not necessarily uniform; edges may contain up to $M+1$ vertices. The edges are ordered, with the first vertex being special, and we will usually use the notation $(i,j_1,\dots,j_{m})$ for an edge where $1\leq m\leq M$ and $i,j_1,\dots,j_{m}\in [N]$. The idea behind the distinction of the first vertex in an edge is that $w^{(m)}_{i,j_1,\dots,j_{m}}$ will describe the strength of connections where $j_1,\dots,j_{m}$ have a joint effect on vertex $i$ (see Figure \ref{fig_hyperedge}).
	
	\begin{figure}[t]
		\centerline{\includegraphics[width=0.3\textwidth]{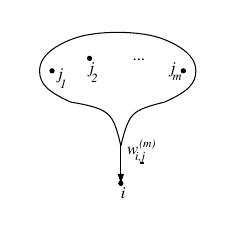}}
		\caption{Edge (hyperedge) with weight $w^{(m)}_{i,j_1,\dots,j_{m}}$.}
		\label{fig_hyperedge}
	\end{figure}

	The $M=1$ case corresponds to (directed) graphs.
	
	We allow so-called \emph{secondary loops} (abbreviated as s. loop), which are $(i,\ul{j})$ edges with non-distinct vertices among $j_1,\dots,j_{m}\in [N]$. Note that traditional loops for the $m=1$ case are excluded from this definition.
	
	We use the notation $[N]^m$ to denote the set of $m$-tuples, and $\ul{j}$ abbreviates $(j_1,\dots,j_{m})$.
	
	For unweighted hypergraphs, adjacency indicators $a^{(m)}_{i,j_1,\dots,j_{m}}$ (where $1\leq m\leq M$ and $i,j_1,\dots,j_{m}\in [N]$)
	
	\begin{align*}
		a_{i,\ul{j}}^{(m)} = \begin{cases}
			1 \ \text{ if $i,j_1,\dots,j_m$ are on the same hyperedge} \\
			0 \ \text{else}
		\end{cases}
	\end{align*}
	
	describe the connections between the vertices.
	
	
	In-degrees for $1\leq m\leq M$ are defined as
	\begin{align}
		\label{eq:dmi}
		d^{(m)}(i):=&\frac{1}{m!}\sum_{\underline{j} \in [N]^m} a_{i,\underline{j}}^{(m)},
	\end{align}
	(where $m!$ is included to cancel the re-orderings of $\ul{j}$), and the average in-degree for each $1\leq m\leq M$ is
	\begin{align*}
		\bar{d}^{(m)}:=& \frac{1}{N} \sum_{i=1}^{N}d^{(m)}(i).
	\end{align*}
	
	In the literature, some normalization is usually assumed. In the present paper, we introduce normalized weights $w^{(m)}_{i,j_1,\dots,j_{m}}$ and corresponding normalized in-degrees 
	$$\delta^{(m)}(i):=\sum_{\underline{j} \in [N]^m} w_{i,\underline{j}}^{(m)}.$$
	representing the total weight of $m$-interactions effecting vertex $i\in [N]$. 
	In the $M=1$ case (classical graphs) we tend to omit the upper index $(m)$ and write $w_{i,\ul{j}}^{(m)}$ simply as $w_{ij},$ and we also utilize the matrix notation $W=(w_{ij})_{i,j \in [N]}.$ 
	
	We have two Conventions for the normalization.
	
	Convention 1:
	\begin{align}
		\label{eq:conv1}
		w_{i,\underline{j}}^{(m)}=\frac{a_{i,\underline{j}}^{(m)}}{m! \bar{d}^{(m)}},\qquad \delta^{(m)}(i)=\frac{d^{(m)}(i)}{\bar{d}^{(m)}}.
	\end{align}
	
	Convention 2:
	\begin{align}
		\label{eq:conv2}
		w_{i,\underline{j}}^{(m)}=\frac{a_{i,\underline{j}}^{(m)}}{m! d^{(m)}(i)}, \qquad
		\delta^{(m)}(i)=1.
	\end{align}
	
	(The same $m!$ from \eqref{eq:dmi} is now included in the conventions.)
	
	For either convention, whenever the denominator would be 0, the numerator will also be 0, and $w_{i,\underline{j}}^{(m)}$ is simply set to 0 as well.
	
	We set
	\begin{align*}w_{\max}=\max_{i,\ul{j},m}w_{i,\underline{j}}^{(m)}.
	\end{align*}
	
	\remm{
	Furthermore, we define
	\begin{align*}
		w_{\max}^{*}:=\max_{m,i} \sum_{ \substack{\ul{j},\ul{k} \in [N]^m \\ \ul{j} \cap \ul{k} \neq \emptyset }}w_{i,\ul{j}}^{(m)}w_{i,\ul{k}}^{(m)}.
	\end{align*}
	
	Note that in the $M=1$ case we have
	\begin{align*}
		w_{\max}^*= \max_i \sum_{j}w_{ij}^2 \leq \delta_{\max} w_{\max}.
	\end{align*}
	}

	We are going to set regularity assumptions for the weights and degrees:
	\begin{align}
		\label{A:1}
		\delta^{(m)}(i) \leq&\, \delta_{\max}, \\
		\label{A:2}
		\sum_{\substack{\underline{j} \in [N]^m \\ \underline{j} \textrm{ s. loop} }} w_{i,\underline{j}}^{(m)} \leq&\, R \sqrt{\remm{w_{\max}^{*}}}.
	\end{align}
	
	For Convention 2, \eqref{A:1} always holds. For Convention 1, we need $d^{(m)}(i) \leq \delta_{\max} \bar{d}^{(m)}$ (upper regularity of the hypergraph).
	
	\eqref{A:2} always holds for $M=1$. It also obviously holds if there are no secondary loops. In other cases, it is an actual restriction on the total weight of secondary loops.
	
	Symmetry is in general not assumed, that is, the hypergraph may be directed.

	For some results concerning classical graphs ($M=1$) with Convention 2, the extra assumption is needed for \emph{out-}degrees as well.
	
	\begin{align}
		\label{eq:out_degree}
		\delta^{\textrm{out}}(j):=\sum_{i \in  [N]} w_{ij} \leq \delta_{\textrm{max}}^{\textrm{out}}
	\end{align}
	 
 	Assumption \eqref{A:1} and \eqref{eq:out_degree} can be understood as a weaker version of double stochasticity of $W$ assumed in \cite{Sridhar_Kar1, Sridhar_Kar2}.

	
	
	
	\subsection{Local density dependent Markov population process}
	\label{s:Markov}
	
	We define a Markov process on the hypergraph. Each vertex is in a state from a finite state space $\S$. $\xi_{i,s}(t)$ denotes the indicator that vertex $i$ is in state $s$ at time $t$; the corresponding vector notation is
	$$\xi_i(t)=\left(\xi_{i,s}(t)\right)_{s \in \S}.$$
	
	We also introduce the notation
	$$\xi^{(m)}_{\ul{i},\ul{s}}(t)=\prod_{k=1}^m \xi_{i_k,s_k}(t),$$
	where $\ul{i}=(i_1,\dots,i_{m})$ is an edge and $\ul{s}=(s_1,\dots,s_{m})$ is a collection of states ($s_k\in \S,k=1,\dots,m)$. $\xi^{(m)}_{\ul{i},\ul{s}}(t)$ describes the indicator of vertices $i_1,\dots, i_m$ being in states $s_1,\dots,s_n$ at time $t$, respectively.
	
	We define the \emph{$m$-neighborhood} of vertex $i$ corresponding to $\ul{s}=(s_1,\dots,s_{m})$ as
	\begin{align}
		\label{eq:phidef}
		\phi^{(m)}_{i,\ul{s}}(t) = \sum_{\ul{j}\in [N]^m} w^{(m)}_{i,\ul{j}}\xi^{(m)}_{\ul{j},\ul{s}}(t).
	\end{align}
	Some explanation is in order. Let $\ul{s}=(s_1,\dots,s_{m})$ be fixed for now. According to \eqref{eq:phidef}, we consider all edges that include $i$ and $m$ other vertices; for each such edge, we check whether the $m$ other vertices are exactly according to the configuration of states described by $\ul{s}$; if yes, their contribution to $\phi^{(m)}_{i,\ul{s}}(t)$ is $w^{(m)}_{i,\ul{j}}$, otherwise their contribution is 0.
	
	The $m$-neighborhoods of $i$ consist of $\phi^{(m)}_{i,\ul{s}}(t)$ for all possible configurations of states $\ul{s}$. The corresponding vector notation is
	\begin{align}
		\label{eq:phidef2}
		\phi^{(m)}_{i}(t) = \left(\phi^{(m)}_{i,\ul{s}}(t)\right)_{\ul{s}\in \S^m},
	\end{align}
	and we may even write
	\begin{align}
		\label{eq:phidef3}
		\phi_{i}(t) = \left(\phi^{(m)}_{i}(t)\right)_{m=1}^M
	\end{align}
	for the entire neighborhood of $i$.
	
	In \eqref{eq:phidef}, the normalized weights $w^{(m)}_{i,\ul{j}}$ are used; in case $w^{(m)}_{i,\ul{j}}=0$ for some $\ul{j}$, the corresponding interaction is simply not present.
	

	Each vertex may transition to another state in continuous time. The transition rates of a vertex may depend on all of its $m$-neighborhoods for $1\leq m \leq M$; accordingly, the transition rate from $s'$ to $s$ is described by the function
	$$q_{ss'}: \otimes_{m=1}^M\mathbb{R}^{\S^m} \to \mathbb{R}$$
	for each $s'\neq s\in\S$.
	
	We assume $q_{ss'}$ is locally Lipschitz, and we also require $q_{ss'}(\phi^{(1)},\dots,\phi^{(M)}) \geq 0$ for non-negative inputs.
	
	For ``diagonal'' rates,
	$$q_{ss}:=-\sum_{s' \neq s}q_{s's}$$
	corresponds to the total outgoing rate from state $s$.
	
	The corresponding transition matrix is $Q=\left(q_{ss'}\right)_{s,s' \in \S}.$ We emphasize that in this convention $q_{ss'}$ refers to an $s \leftarrow s'$ transition and not an $s \to s'$ one. This ordering allows us to use column vectors and matrix multiplication from the left. 
	
	The dynamics of $\left(\xi_{i}(t)\right)_{i=1}^{N}$ is a continuous-time Markov chain with state-space $\S^{N}$ where each vertex performs transitions according to the transition rates $q_{s's}$, independently from the others. After a transition, vertices update their neighborhood vectors $\phi_i(t)$. We call such dynamics \emph{local-density dependent Markov processes}.
	
	We define the process $\left(\xi_{i,s}\right)_{i,s}$ formally via Poisson representation:
	\begin{align}
		\label{eq:xi}
		\begin{split}
			{\xi}_{i,s}(t)=&\xi_{i,s}(0)+\sum_{\substack{s' \in \S \\ s' \neq s}} \N_{i,ss'} \left(\HH_{i,ss'}(t) \right)-\N_{i,s's} \left(\HH_{i,s's}(t) \right),\\
			\HH_{i,ss'}(t)=& \left\{(\tau,x) \in \mathbb{R}^2 \left. \right| 0 \leq \tau \leq t, \ 0 \leq x \leq q_{ss'} \left(\phi_{i}(\tau) \right)  {\xi}_{i,s'}(\tau)\right\},
		\end{split}
	\end{align}
	where for each choice of $1\leq i\leq N$ and $s\neq s'\in\S$, $\left(\N_{i,ss'}(x,y):x,y\geq 0\right)$ is a 2-dimensional Poisson-process with density 1, and the processes are independent for different $(i,s,s')$ triples.
	
	\eqref{eq:xi} is a cumulative formula counting all transitions of the vertex $i$ to and from state $s$ up to time $t$; $s \leftarrow s'$ transitions are generated using the Poisson points in the 2-dimensional domain $\HH_{i,ss'}(t)$ which has area $\int_0^t  q_{ss'} \left(\phi_{i}(\tau) \right)  {\xi}_{i,s'}(\tau) \d\tau$, ensuring the proper transition rate for $s \leftarrow s'$ jumps at time $\tau$. The second term of the sum corresponds to $s' \leftarrow s$ transitions in a similar manner.

	\subsection{N-intertwined mean field approximation}
	
	Although the state occupation probabilities of the population process can be described by the Chapman--Kolmogorov equations, the number of equations is $\left |\S \right|^N$, making it infeasible for numeric or analytic investigations even for moderate sized populations. To address this issue, several approximation schemes had been introduced in the literature with varying complexity.
	
	This chapter discusses the quenched mean field approximation \cite{NIMFA2011}, also called the N-intertwined mean field approximation (NIMFA). NIMFA preserves all information regarding the graph structure and only neglects dynamical correlation between vertices. The goal is to derive state occupation probabilities for each vertex separately, resulting in a total of $\left | \S \right | N$ equations. 
	
	A possible intuition for NIMFA is as follows.
	\begin{align}
		\label{eq:preNIMFA}
		\frac{\d}{\d t} \E \left(\xi_{i}(t) \right)= \E \left[Q \left(\phi_{i}(t)\right) \xi_{i}(t) \right]
	\end{align}
	can be derived from \eqref{eq:xi}. To close \eqref{eq:preNIMFA}, we apply the approximation $\phi_{i}(t) \approx \E \left( \phi_i(t) \right)$, which is reasonable when $N$ is large and there is low correlation between vertices:
	\begin{align*}
		\E \left[Q \left(\phi_{i}(t)\right) \xi_{i}(t) \right] \approx  \E \left[Q \left(\E \left(\phi_{i}(t) \right )\right) \xi_{i}(t) \right]=Q \left(\E \left(\phi_{i}(t) \right )\right) \E \left(\xi_{i}(t) \right).
	\end{align*}
	Accordingly, the NIMFA approximation $z_i(t)=(z_{i,s}(t))_{s\in \S},1\leq i\leq N$ is the solution of the system
	\begin{align}
		\label{eq:NIMFA}
		\begin{split}
			\frac{\d}{\d t} z_{i}(t)=& Q \left(\zeta_{i}(t) \right)z_{i}(t), \\
			\zeta_{i}(t)=& \left(\zeta_{i}^{(m)}(t) \right)_{m=1}^M, \\
			\zeta_{i}^{(m)}(t)=& \left(\zeta_{i,\underline{s}}^{(m)}(t)\right)_{\underline{s} \in [N]^m}=\left(\sum_{\underline{j} \in \S^m} w_{i,\underline{j}}^{(m)} z_{\underline{j},\underline{s}}^{(m)}(t) \right)_{\underline{s} \in \S^m},
		\end{split}	
	\end{align}
	where $z_i(t)$ corresponds to $\xi_i(t)$ and $\zeta_i(t)$ corresponds to $\phi_i(t)$, and then the approximation used is
	$$\pr \left(\xi_{i,s}(t)=1\right)=\E \left(\xi_{i,s}(t)\right) \approx z_{i,s}(t).$$
	
	The following theorem ensures the existence and uniqueness of the solution of \eqref{eq:NIMFA}.
	
	\begin{theorem}
		\label{t:NIMFA_Delta}	
		Let $\Delta^{\S}$ denote the set of probability vectors from $\mathbb{R}^{\S}.$ 
		For any initial condition $ z_i(0) \in \Delta^{\S}$ for all $i$ the ODE system \eqref{eq:NIMFA} has a unique global solution such that $ z_{i}(t) \in \Delta^S$ for all $i$ and $t>0$ as well.
	\end{theorem}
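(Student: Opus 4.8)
The plan is the classical three-step argument for ODEs with an invariant region: local well-posedness, forward invariance of the simplex, then global extension by compactness.

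First I would record that the right-hand side of \eqref{eq:NIMFA}, viewed as a map $F\colon (\mathbb{R}^{\S})^N \to (\mathbb{R}^{\S})^N$, $(z_i)_i \mapsto (Q(\zeta_i)z_i)_i$, is locally Lipschitz: each component $\zeta^{(m)}_{i,\ul{s}}$ is a polynomial (hence $C^\infty$) in the entries $z_{j,s}$, each $q_{ss'}$ is locally Lipschitz by hypothesis, and finite sums, products and compositions of locally Lipschitz maps are locally Lipschitz on bounded sets. The Picard--Lindel\"of (Cauchy--Lipschitz) theorem then yields, for every initial datum, a unique maximal solution $(z_i(t))_i$ on some interval $[0,T_{\max})$.

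Second --- and this is the heart of the matter --- I would show that $(\Delta^{\S})^N$ is forward invariant. The coordinate sums are immediate: since $q_{ss}=-\sum_{s'\neq s}q_{s's}$, every column of $Q(\cdot)$ sums to $0$, so $\mathds{1}^\top Q(\zeta_i)=0$ and $\frac{\d}{\d t}\sum_s z_{i,s}(t)=\mathds{1}^\top Q(\zeta_i(t))z_i(t)=0$; with $\sum_s z_{i,s}(0)=1$ this gives $\sum_s z_{i,s}(t)\equiv 1$. For non-negativity I would use a subtangentiality (Nagumo-type) argument, with the following key observation: as long as $z_j(t)\in\Delta^{\S}$ for every $j$, in particular $z_{j,s}(t)\in[0,1]$, each $\zeta^{(m)}_{i,\ul{s}}(t)=\sum_{\ul{j}} w^{(m)}_{i,\ul{j}}\prod_k z_{j_k,s_k}(t)$ is non-negative because the weights are non-negative; hence the standing hypothesis ``$q_{ss'}(\phi^{(1)},\dots,\phi^{(M)})\ge 0$ for non-negative inputs'' applies to the off-diagonal entries $q_{ss'}(\zeta_i(t))$, $s'\neq s$. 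Consequently, if a coordinate $z_{i,s}$ reaches $0$ while all other coordinates remain non-negative, then $\frac{\d}{\d t}z_{i,s}=q_{ss}(\zeta_i)\cdot 0+\sum_{s'\neq s}q_{ss'}(\zeta_i)z_{i,s'}\ge 0$, i.e.\ on the boundary the vector field points into the non-negative orthant. To turn this into a rigorous invariance statement I would either invoke the standard invariant-region theorem (Nagumo), or argue directly by setting $T^\ast=\sup\{t<T_{\max}: z_i(s)\in\Delta^{\S}\ \forall i,\ \forall s\le t\}$, using continuity to get $z_i(T^\ast)\in\Delta^{\S}$, and deriving a contradiction from the above estimate should $T^\ast<T_{\max}$; equivalently one can perturb the off-diagonal rates to $q_{ss'}+\varepsilon$ to make the inequality strict and let $\varepsilon\downarrow 0$. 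I expect this step --- in particular the bootstrapping between ``$z_j\ge 0$'' and ``$\zeta_i\ge 0$, so that $Q(\zeta_i)$ genuinely has the sign structure of a generator'' --- to be the main obstacle; everything else is routine.

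Finally, global existence follows by compactness: once forward invariance is established, the solution stays in the compact set $(\Delta^{\S})^N$ for all $t\in[0,T_{\max})$, on which $F$ is bounded, so the standard blow-up alternative forces $T_{\max}=\infty$. Uniqueness on $[0,\infty)$ is inherited from the local uniqueness in the first step. This gives the desired unique global solution with $z_i(t)\in\Delta^{\S}$ for all $i$ and all $t>0$.
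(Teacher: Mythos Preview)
Your proposal is correct and follows the classical route: local Picard--Lindel\"of, Nagumo-type forward invariance of $(\Delta^{\S})^N$, then no blow-up by compactness. The paper takes a genuinely different path for the invariance step, one that sidesteps precisely the bootstrap you flagged as the main obstacle. Instead of working with the original $Q$, it replaces the off-diagonal rates by $\hat q_{ss'}:=|q_{ss'}|$, so that $\hat Q(\phi)$ has the sign structure of a generator for \emph{all} inputs $\phi$, not only non-negative ones. For the modified system it then freezes the (already locally defined) $\hat\zeta_i(\cdot)$ as a given time-dependent coefficient and observes that, for each fixed $i$, the resulting linear ODE is the Kolmogorov forward equation of a time-inhomogeneous Markov chain on $\S$; its solution is therefore automatically a probability vector, and a Gr\"onwall comparison identifies it with $\hat z_i$. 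Once $\hat z_i(t)\in\Delta^{\S}$ is established for all $i$, one has $\hat\zeta_i(t)\ge 0$, the absolute values were vacuous, and $\hat z_i$ also solves the original system; uniqueness then gives $\hat z_i=z_i$. The paper's trick buys a clean argument with no subtangentiality or $\varepsilon$-perturbation needed, at the price of introducing the auxiliary $\hat Q$ and the probabilistic interpretation; your approach is more direct and self-contained, and the circularity you correctly identified between ``$z_j\ge 0$'' and ``$\zeta_i\ge 0$'' is exactly what the paper's modification is engineered to dissolve.
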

	
	\subsection{Examples}
	
	In this section we give some examples for models covered by the formalism of Section \ref{s:Markov}.
	
	\subsubsection*{The simplicial SIS model}
	
	We will use the simplicial SIS model, also referred to as the contact process as a running example.
	
	In the $M=1$ case (graphs) the setup is the following: Each vertex can be in one of two states: susceptible ($S$) and infected ($I$), hence the state space is $\S=\{S,I \}.$ Infected vertices become susceptible at a constant rate $\gamma \geq 0$ while susceptible vertices receive the illness with rate proportional to number of its infected neighbhours.
	
	The number of infected neighbhours of vertex $i \in [N]$ at time $t$ equals to
	$$ \sum_{j=1}^N a_{ij}\xi_{j,I}(t) $$
	as $a_{ij}\xi_{j,I}(t)$ the indicator of vertex $j$ is connected to vertex $i$ and that it is infected at time $t$. After normalizing it with $\bar{d}$ or $d(i)$ depending on our choice of convention 1 or 2 one gets
	$$\sum_{j=1}^N w_{ij} \xi_{j,I}(t)=\phi_{i,I}(t). $$
	
	Therefore, the transition rates takes the form $q_{SI}(\phi_i(t))=\gamma, \ q_{IS}(\phi_i(t))=\beta \phi_{i,I}(t)$ where $\beta \geq 0$ is a suitable constant factor. In matrix form:
	\begin{align*}
		Q(\phi_i(t))=
		\left[ {\begin{array}{cc}
				-\gamma & \gamma \\
				\beta \phi_{i,I}(t) & -\beta \phi_{i,I}(t) \\
		\end{array} } \right]
	\end{align*}
	
	For the SIS process NIMFA takes the form:
	\begin{align*}
		\frac{\d}{\d t} z_{i,I}(t)=-\gamma z_{i,I}(t)+\beta(1-z_{i,I}(t))\sum_{j=1}^{N}w_{ij}z_{j,I}(t).
	\end{align*}
	Here we used $z_{i,S}(t)=1-z_{i,I}(t)$ which is also the reason why it enough to write the $I$ components only.
	
	The extension of the SIS model to hypergraphs is called the simplicial SIS model. The curing rate stays $\gamma$, however the infection dynamics is modified. A susceptible vertex can be infected via any $(m+1)$-edge if all other $m$ vertices are infected. The weighted sum of such edges $(m+1)$-edges is
	$$ \sum_{\ul{j} \in [N]^{m}} w_{i,\ul{j}}^{(m)} \xi_{\ul{j},(I,\dots, I)}^{(m)}(t)=\phi_{i,(I,\dots,I)}^{(m)}(t). $$
	
	The infection rates is sum of all the $1 \leq m \leq M$ with appropriate $\beta_1,\dots,\beta_M \geq 0$ factors:
	$$q_{IS}(\phi_i(t))=\sum_{m=1}^{M}\beta_m \phi_{i,(I,\dots,I)}^{(m)}(t). $$
	
	For the simplicial SIS model NIMFA takes the form
	\begin{align*}
		\frac{\d}{\d t} z_{i,I}(t)=-\gamma z_{i,I}(t)+\left(1-z_{i,I}(t) \right)\sum_{m=1}^{M}\beta_m \sum_{\underline{j} \in [N]^m}w_{i,\underline{j}}^{(m)}z_{\underline{j},(I,\dots,I)}^{(m)}(t),
	\end{align*}

	\subsubsection*{Glauber dynamics}
	
	Glauber dynamics is a stochastic process whose stationary distribution coincides with the distribution given by a spin system, such as the Ising model \cite{Glauber_original}.
	
	There are two possible states: $\S=\{+,-\}.$ Instead of the indicators 
	$$\xi_{i,+}(t),\, \xi_{i,-}(t)$$ 
	it is customary use the sign variables
	$$\sigma_i(t):=\xi_{i,+}(t)-\xi_{i,-}(t)=2 \xi_{i,+}(t)-1.$$ 
	
	In physical systems it is natural to assume $w_{ij}$ is symmetric and $w_{ii}=0$.
		
	The dynamics is the following:
	\begin{itemize}
		\item At each time step, choose a vertex $i$ uniformly.
		\item With probability $p_i(\sigma)=\frac{e^{\beta S_i(\sigma)}}{e^{\beta S_i(\sigma)}+1}$, vertex $i$ switches to state + (else -), where
		$$S_i(\sigma)=\sum_{j=1}^Nw_{ij}\sigma_j. $$
	\end{itemize}
	Note that $S_{i}(\sigma)$ arises from the reduction of the energy
	\begin{align*}
		H(\sigma):=-\frac{1}{2}\sum_{i<j}w_{ij}\sigma_{i}\sigma_{j}
	\end{align*}
	when vertex $i$ is flipped from $-$ to $+$.  The stationary distribution is then given by the Gibbs measure
	\begin{align*}
	P(\sigma):=& \frac{1}{Z}e^{-\beta H(\sigma)}, \\
	Z:=& \sum_{\sigma}e^{-\beta H(\sigma)}.
	\end{align*}

	We modify the above dynamics. First, note that, in accordance with \eqref{eq:phidef},
	\begin{align*}
	S_i(\sigma(t))=\sum_{j=1}^{N}w_{ij}\left(\xi_{j,+}(t)-\xi_{j,-}(t) \right)=\phi_{i,+}(t)-\phi_{i,-}(t).
	\end{align*}
	With a slight abuse of notation, we denote
	\begin{align*}
	S\left(\phi_i(t) \right):=\alpha \phi_{i,+}(t)-\gamma \phi_{i,-}(t),
	\end{align*}
	allowing the dynamics to have a preferred state.
	
	Furthermore, we turn to the continuous time version instead with transition rates given by
	\begin{align*}
		q_{+-}(\phi)=&e^{\beta S(\phi)}, \\
		q_{-+}(\phi)=&1.
	\end{align*}
	Since there are only two states, it is enough to consider the probabilities of occupying state +. For this, NIMFA gives the following system of ODEs:
	\begin{align}
	\label{eq:NIMFA_Ising}
	\frac{\d}{\d t} z_{i,+}(t)=&(1-z_{i,+}(t))e^{\beta S(\zeta_i(t))}-z_{i,+}(t).
	\end{align}
	
The equilibrium state is given by the fixed point problem
	\begin{align}
	\label{eq:Ising_stationary}
	 z_{i,+}=\frac{e^{\beta S(\zeta_i)}}{e^{\beta S(\zeta_i)}+1}.
	\end{align}
	Assume $\alpha=1,\gamma=-1$ as in the original setting and that the underlying weighted graph is regular: $\forall i \ \delta(i)=\sum_{j} w_{ij}=1$. Than \eqref{eq:Ising_stationary} reduces to
	\begin{align*}
	&\sigma=\tanh \left(\frac{1}{2}\beta \sigma \right), \\
	&\forall i \ 2 z_{i,+}-1=\sigma
	\end{align*}
	giving back the classical mean field approximation of the Ising model on lattice. This is not surprising as both NIMFA and the classical mean field approach is based on the assumption of independence of vertices.
	
	Based on \cite{Glauber_ODE}, we can generalize the model for hypegraphs via extending $S(\phi)$ to
	\begin{align*}
	S(\phi_i(t)):=\sum_{m=0}^{M} \alpha_m \phi_{i,(+,\dots,+)}^{(m)}(t)-\gamma_m \phi_{i,(-,\dots,-)}^{(m)}(t)
	\end{align*}
	allowing the system to lose even more energy when $3$ or more neighbors have the same configuration on a hyper-edge. 

	\subsubsection*{The voter model}

	The voter model is a conceptually simple stochastic process modeling opinion dynamics \cite{voter_majority}. In the most simple case, there are two possible states: $\S={0,1}.$
	
	The dynamics can be described the following way: At each time step, we choose a vertex uniformly. Said vertex chooses an neighbor also uniformly, and copies its state. Similarly to the Glauber dynamics, we will study the continuous time version instead.
	
	For vertex $i$, the ratio of neighbors sharing belief $s \in \{0,1\}$ is 
	\begin{align*}
	\frac{1}{d(i)}\sum_{j=1}^N a_{ij} \xi_{j,s}(t)=\phi_{i,s}(t)
	\end{align*}	    
	with the choice of Convention 2. Hence, the transition rates take the form
	\begin{align*}
	q_{01}(\phi_i(t))=&\lambda \phi_{i,0}(t), \\
	q_{10}(\phi_{i}(t))=& \lambda \phi_{i,1}(t)=\lambda\left(1-\phi_{i,0}(t)\right).
	\end{align*}
	Using $z_{i,1}(t)=1-z_{i,0}(t)$, NIMFA can be written as
	\begin{align*}
	\frac{\d}{\d t}z_{i,0}(t)=-\lambda (1-\zeta_{i,0}(t))z_{i,0}(t)+\lambda \zeta_{i,0}(t)\left(1-z_{i,0}(t)\right).
	\end{align*}
	
	\subsubsection*{A modified majority rule model}
	
	Another popular model of opinion dynamics is the majority rule \cite{voter_majority}. In this setting a group of $m+1$ individuals are choosen who update their state simultaneously to the majority opinion. Ties are usually broke with either a random choice or setting a preferred opinion to win in this case, say opinion $1$. For the sake of simplicity, we apply the latter approach.
	
	Due to the continuous time setting we use, we modify the majority rule such that only one individual updates its opinion during a transition based on the state of the other vertices (not including its own opinion for the sake of simplicity).
	
	As it is stated in \cite{voter_majority}, the hypergraph setting is more suitable for majority rule. We assume communities have a bounded size $M+1$, while each individual can be a part of many, possibly overlapping communities. 
	
	$a_{i,j_1,\dots,j_m}^{(m)}$ is the indicator of vertices $i, j_1, \dots, j_m \in [N]$ being in a community. We assume symmetry in the indices and set $a_{i,j_1,\dots,j_m}^{(m)}=0$ if there are duplicates. We use a slightly modified version of Convention 1:
	\begin{align*}
	w_{i,\ul{j}}^{(m)}=\frac{\alpha_m a_{i,\ul{j}}}{m! \bar{d}^{(m)}},
	\end{align*}
	where $\alpha_m$ measures how much importance vertices put on communities of size $m+1$. $\remm{\bar{w}_{\textrm{max}}}$ can be small either due to vertices being part of many communities of size $m+1$ on average or because they put less importance on said communicates.
	
	Introduce the notation $|s|=\sum_{l=1}^{m}s_l.$ Vertex $i$ in community $i,j_1,\dots,j_m$ changes its opinion to the majority of $j_1,\dots, j_m$ at rate $w_{i,\ul{j}}^{(m)}$. Therefore,
	\begin{align*}
	q_{01}(\phi_{i}(t))=&\sum_{m=0}^{M} \sum_{\ul{j} \in [N]^m}w_{i,\ul{j}}^{(m)}\1{0 \ \textit{is the majority for $j_1, \dots, j_m$}}\\
	=& \sum_{m=0}^{M} \sum_{\ul{j} \in [N]^m}w_{i,\ul{j}}^{(m)} \sum_{|\ul{s}|<\frac{m}{2}} \prod_{l=1}^{m} \xi_{j_l,s_l}(t)=\sum_{m=0}^{M} \sum_{|\ul{s}|<\frac{m}{2}} \phi_{i,\ul{s}}^{(m)}(t), \\
	q_{10}(\phi_{i}(t))=&\sum_{m=0}^{M} \sum_{|\ul{s}| \geq \frac{m}{2}} \phi_{i,\ul{s}}^{(m)}(t).
	\end{align*}

	The NIMFA ODEs are
	\begin{align*}
		\frac{\d}{\d t}z_{i,0}(t)=&(1-z_{i,0}(t))\sum_{m=0}^{M} \sum_{|\ul{s}|<\frac{m}{2}} \zeta_{i,\ul{s}}^{(m)}(t)-z_{i,0}(t)\sum_{m=0}^{M} \sum_{|\ul{s}| \geq \frac{m}{2}} \zeta_{i,\ul{s}}^{(m)}(t).
	\end{align*}

	\section{Error bounds for NIMFA}
	\label{s:errorbounds}
	
	In this section we are presenting our main results which bound the error arising from neglecting the dynamical correlation between vertices.

	Recall that \eqref{eq:preNIMFA} was closed by assuming $\phi_{i}(t)\approx \E \left( \phi_{i}(t)\right)$. We introduce an auxiliary process where the empirical neighborhood $\phi_{i}(t)$ is replaced by the approximate $\zeta_i(t)$ from \eqref{eq:NIMFA}:
	\begin{align} 
		\label{eq:hat_xi}
		\begin{split}
			\hat{\xi}_{i,s}(t)=&\xi_{i,s}(0)+\sum_{\substack{s' \in \S \\ s' \neq s}} \N_{i,ss'} \left(\K_{i,ss'}(t) \right)-\N_{i,s's} \left(\K_{i,s's}(t) \right),\\
			\K_{i,ss'}(t)=& \left\{(\tau,x) \in \mathbb{R}^2 \left. \right| 0 \leq \tau \leq t, \ 0 \leq x \leq q_{ss'} \left(\zeta_{i}(\tau) \right)  \hat{\xi}_{i,s'}(\tau)\right\}.
		\end{split}
	\end{align}
	The process $\hat{\xi}_{i,s}(t)$ is an indicator process just like $\xi_{i,s}(t)$, so it takes 0 or 1 values, and $\sum_{s\in\S}\hat{\xi}_{i,s}(t)=1$ for any $i\in [N]$ and $t\geq 0$. However, assuming independent initial conditions, $\hat{\xi}_{i}(t)$ remain independent. Applying total expectation to \eqref{eq:hat_xi} shows
	\begin{align*}
		\frac{\d}{\d t}\E \left(\hat{\xi}_{i}(t) \right)= Q \left(\zeta_{i}(t) \right) \E \left(\hat{\xi}_{i}(t) \right),
	\end{align*}
	which, along with \eqref{eq:NIMFA}, implies that if $\E \left(\hat{\xi}_{i}(0) \right)=z_{i}(0)$, then $\hat{\xi}_{i}(t)-z_i(t)$ is a martingale and
	\begin{align}
		\label{NIMFA_initial_condition}
		\E \left(\hat{\xi}_{i}(t) \right)=z_{i}(t) \quad  \forall t \geq 0.
	\end{align}

	Using the same background Poisson processes $ \N_{i,ss'}$ provides a coupling between $\xi$ and $\hat\xi$ that will be useful later on.

	We aim to give an upper bound for $|\hat{\xi}(t)-\xi(t)|$, as well as for $|\hat{\xi}(t)-z(t)|$. We start with $|\hat{\xi}(t)-\xi(t)|$ by introducing the error terms
	\begin{align*}
		D_{i}^{(0)}(t)=& \sup_{0 \leq \tau \leq t} \E \left(\sum_{s \in \S} \left|\xi_{i,s}(\tau)-\hat{\xi}_{i,s}(\tau) \right| \right), \\
		\tilde{D}_{i}^{(0)}(t)=&  \E \left(\sup_{0 \leq \tau \leq t}\sum_{s \in \S} \left|\xi_{i,s}(\tau)-\hat{\xi}_{i,s}(\tau) \right| \right).
	\end{align*}
	Apparently, the only difference between the two is the order in which we take the supremum in time. $\tilde{D}_{i}^{(0)}(t)$ is more strict as
	\begin{align*}
		D_{i}^{(0)}(t) \leq \tilde{D}_{i}^{(0)}(t).
	\end{align*}
	
	Observe that  $\sum_{s \in \S} \left|\xi_{i,s}(\tau)-\hat{\xi}_{i,s}(\tau) \right|$ only has two possible values: $0$ if $\xi_{i}(t)=\hat{\xi}_i(t)$, and $2$ otherwise (as there will be two $s \in \mathcal{S}$ indices where $\xi_{i,s}(t),\hat{\xi}_{i,s}(t)$ differs). This implies
	
	\begin{align*}
		\sup_{0 \leq \tau \leq t}\pr \left( \xi_{i}(\tau) \neq \hat{\xi}_{i}(\tau) \right)=& \frac{1}{2}D_i^{\rem{(0)}}(t), \\
		\pr \left( \exists \ 0 \leq \tau \leq t: \ \xi_{i}(\tau) \neq \hat{\xi}_{i}(\tau) \right)=& \frac{1}{2}\bar{D}_i^{\rem{(0)}}(t) 
	\end{align*}

	We also introduce error terms describing the environments arising from $\xi_{i}(t)$ and $\hat{\xi}_{i}(t)$:
	\begin{align*}
		D_{i}^{(m)}(t)=& \sup_{0 \leq \tau \leq t} \E \left[\sum_{\underline{s} \in \S^m} \left|\phi_{i,\underline{s}}^{(m)}(\tau)-\zeta_{i,\underline{s}}^{(m)}(\tau) \right| \right]\quad (1 \leq m \leq M),\\
		\tilde{D}_{i}^{(m)}(t)=& \, \E \left[\sup_{0 \leq \tau \leq t} \sum_{\underline{s} \in \S^m} \left|\phi_{i,\underline{s}}^{(m)}(\tau)-\zeta_{i,\underline{s}}^{(m)}(\tau) \right| \right]\quad (1 \leq m \leq M).
	\end{align*}
	Since the neighborhoods $\phi_{i}(t)$ and $\zeta_{i}(t)$ are constructed from the indicators $\xi_{i}(t)$ and $\hat{\xi}_{i}(t)$, it is reasonable to expect $\phi_{i}(t)$ and $\zeta_{i}(t)$ to be close to each other -- as long as $\xi_i(t)$ and $\hat{\xi}_{i}(t)$ are also close. To avoid circular reasoning, we carry on handling these two types of errors together at the same time. This motivates the introduction of
	\begin{align*}
		D_{\max}^{(m)}(t)=& \max_{i \in [N]} D_{i}^{(m)}(t), \\
		D_{\max}(t)=& \sum_{m=0}^{M} D_{\max}^{(m)}(t), \\
		\tilde{D}_{i}(t)=& \sum_{m=0}^{M}\tilde{D}_{i}^{(m)}(t).
	\end{align*}
	
	The vector notation $\tilde{D}(t)=\left(\bar{D}_{i}(t) \right)_{i \in [N]}$ will also be utilized.
	
	Now we can go ahead to state the main results of the paper. The idea behind the staments is when the vertex weights are generally small (the network is well-distributed) then vertices has low correlation between each other, hence NIMFA is accurate.
	\begin{theorem} (Main)
		\label{t:Main}
		
		Assume the initial conditions $\xi_{i}(0)$ are independent and \eqref{NIMFA_initial_condition} is satisfied. Then for every $t \geq 0$ there is a constant $C=C\left(t, \delta_{\max},R\right)$ such that
		\begin{align}
			\label{eq:main1}
		\max_{i}	\sup_{0 \leq \tau \leq t}\pr \left( \xi_{i}(\tau) \neq \hat{\xi}_{i}(\tau) \right) \leq 	\frac{1}{2}D_{\max}(t) \leq & C  \sqrt{\remm{w_{\max}^{*}}}.
		\end{align}
		
		Furthermore, if we additionally assume $M=1$ (having $1$-uniform hypergraphs) then there exist constants $C_1=C_1(\delta_{\max}), C_2=C_{2}(\delta_{\max})$ such that for all $t \geq 0$
		\begin{align}
			\label{eq:main2}
			\begin{split}
				\left \|\tilde{D}(t) \right \| \leq& C_1\remm{(1+t)}\exp\left(C_2\left\|W+I\right \|t \right) \|\mu \|,\\
				\mu=&\left( \sqrt{\sum_{j=1}^{N}w_{ij}^2}\right)_{i \in [N]},
			\end{split}
		\end{align}
		where the norm $\| \cdot \|$ is arbitrary, \rem{$W=\left(w_{ij}\right)_{i,j=1}^N$ and $I$ is the identity matrix}.
	\end{theorem}
	
	\begin{remark}The reason why we have different results for $M>1$ and $M=1$ is technical in nature. The main observation is that in the $M=1$ case $\hat{\xi}_{i,s}(t)-z_{i,s}(t)$ \ref{eq:2_norm_bound} \remm{can be easily made to be a martingale by the appropriate compensator} martingale making possible to take $\sup_{0 \leq \tau \leq t}$ inside the expectation via Doob's inequality. It is no longer the case for $M>1$.
	\end{remark}
	
\eqref{eq:main1} is a local result in the sense that it provides a uniform bound, ensuring that $\hat{\xi}_{i,s}(t)$ and $\xi_{i,s}(t)$ are close for all vertices $i$ simultaneously. For example, in the SIS process it allows us to approximate infection probabilities for concrete individuals, not just global or mesoscopic ratios. 
		
	\eqref{eq:main2} will be elaborated on in Theorem \ref{t:extra}. 
	
	In general, we cannot expect a similar local result for $\hat{\xi}_{i,s}(t)$ and $z_{i,s}(t)$ since $\hat{\xi}_{i,s}(t)$ is an indicator while $z_{i,s}(t)$ is a continuous variable. However, if we average out $\hat{\xi}_{i,s}(t)$ over a macroscopic set of vertices, a similar result will hold.
	
	In \eqref{eq:main2} the use of $\ell^2$ or $\ell^{\infty}$ is advised. Observe
		\begin{align}
		\nonumber
		&\|W \|_\infty=\max_{i} \sum_{j}w_{ij} \leq \delta_{\textrm{max}} \\
		\label{eq:2_norm_bound}
		& \|W \|_2 \leq \sqrt{\|W \|_1 \|W \|_{\infty}}=\sqrt{\left(\max_{j} \sum_{i}w_{ij}\right)\left(\max_{j} \sum_{i}w_{ij}\right)} \leq \sqrt{\delta_{\textrm{max}}^{\textrm{out}}\delta_{\textrm{max}}}, 
	\end{align}
	making $\exp\left(C_2\left\|W+I\right \|t \right)$ bounded in \eqref{eq:main2}. Note that \eqref{eq:2_norm_bound} is the only step where Assumption \eqref{eq:out_degree} regarding $\delta_{\textrm{max}}^{\textrm{out}}$ is used.

	As for $\|\mu \|$:
	
	\begin{align*}
		\|\mu \|_{\infty}=&\max_{1 \leq i \leq N} \sqrt{\sum_{j=1}^{n}w_{ij}^2} \leq \max_{1 \leq i \leq N} \sqrt{w_{\textrm{max}}\sum_{j=1}^{n}w_{ij}} \leq \sqrt{w_{\textrm{max}} \delta_{\textrm{max}}},  \\	
		\|\mu \|_{2}=&\sqrt{\sum_{i=1}^{N}\sum_{j=1}^N w_{ij}^2}.
	\end{align*}
	
	Convention 1 works well with the $O \left(\sqrt{w_{\textrm{max}}} \right)$ error bound as $w_{\textrm{max}}=\frac{1}{\bar{d}}$ holds in that case suggesting vertices being close to independent when they have a lot of neighbors on average. Similarly to \eqref{eq:main1}, it also gives a uniform error bound, making it possible to approximate the probabilities at the individual level. For Convention 2 on the other hand, $w_{\textrm{max}}=\frac{1}{d_{\min}}$  is sensitive to even one vertex with a low degree. If we are not attached to uniform bounds in $i,$ we can provide a more robust on for the error of a typical vertex, thus, it is possible to describe global or mesoscopic population statistics.
	
	Let $\iota \sim U \left([N]\right)$ the index of a randomly chosen vertex.
	\begin{align*}
		&\pr \left(  \exists \ \tau \in [0,t]: \ \xi_{\iota}(\tau) \neq \hat{\xi}_{\iota}(\tau) \right) =\\
		&\frac{1}{N}\sum_{i=1}^N\pr \left(  \exists \ \tau \in [0,t]: \ \xi_{i}(\tau) \neq \hat{\xi}_{i}(\tau) \right) \leq\\
		& \frac{1}{2 N}\sum_{i=1}^{N}\tilde{D}_i(t) \leq \sqrt{\frac{1}{4 N}\sum_{i=1}^{N}\tilde{D}_i^2(t)}=O \left( \sqrt{\frac{1}{ N} \|\mu \|_2^2}\right)
	\end{align*}
	Observe
	\begin{align}
		\label{eq:frobenius}
		\frac{1}{N}\| \mu \|_2^2= \frac{1}{N}\sum_{i=1}^N \sum_{j=1}^{N}w_{ij}^2
	\end{align}
	is the squared and normalized Frobenius norm of the matrix $W.$ We mention that such bound were used in \cite{Sridhar_Kar2} under more strict assumptions regarding $W$.

	Note that for Convention 2
		\begin{align}
		\label{eq:main4}
		\frac{1}{N}\sum_{i=1}^N \sum_{j=1}^{N}w_{ij}^2=\frac{1}{N}\sum_{i=1}^N \sum_{j=1}^{N}\frac{a_{ij}}{d^2(i)}=\frac{1}{N}\sum_{i=1}^N \frac{1}{d(i)},
		\end{align}
	meaning the error is small when vertices typically have large degrees.
	
	These observations along with Theorem \ref{t:Main} give the following result:
	\begin{theorem}
		\label{t:extra}
		For $M=1$ (directed, weighted graphs), there exist constants $C_1=C_1(t,\delta_{\textrm{max}})$ and $C_2=C_2(t,\delta_{\textrm{max}},\delta_{\textrm{max}}^{\textrm{out}})$ such that
		\begin{align}
			\label{eq:extra1}
			\max_{i}  \pr \left( \exists \ \tau \in [0,t] : \ \xi_{i}(\tau) \neq \hat{\xi}_{i}(\tau) \right) \leq & C_1 \sqrt{\remm{w_{\max}^*}}, \\
			\label{eq:extra2}
			\frac{1}{N} \sum_{i=1}^N\pr \left( \exists \ \tau \in [0,t] : \ \xi_{i}(\tau) \neq \hat{\xi}_{i}(\tau) \right) \leq & C_2 \sqrt{\frac{1}{N}\sum_{i=1}^N \sum_{j=1}^N w_{ij}^2}.
		\end{align}
	\end{theorem}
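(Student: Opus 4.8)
The plan is to obtain \eqref{eq:extra1}--\eqref{eq:extra2} as direct corollaries of Theorem \ref{t:Main}, by specializing the free norm in \eqref{eq:main2} and combining it with the elementary identity relating the hitting probability to the error terms. First I would record that $\sum_{s\in\S}\left|\xi_{i,s}(\tau)-\hat\xi_{i,s}(\tau)\right|$ takes only the values $0$ and $2$, so its running supremum over $[0,t]$ equals $2\cdot\1{\exists\,\tau\in[0,t]:\ \xi_i(\tau)\neq\hat\xi_i(\tau)}$; taking expectations gives $\pr\!\left(\exists\,\tau\in[0,t]:\ \xi_i(\tau)\neq\hat\xi_i(\tau)\right)=\tfrac12\tilde D_i^{(0)}(t)\le\tfrac12\tilde D_i(t)$ for every $i$ (recall that for $M=1$ we have $\tilde D_i(t)=\tilde D_i^{(0)}(t)+\tilde D_i^{(1)}(t)$). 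Hence it suffices to bound $\|\tilde D(t)\|_\infty=\max_i\tilde D_i(t)$ for \eqref{eq:extra1} and $\tfrac1N\sum_{i=1}^N\tilde D_i(t)$ for \eqref{eq:extra2}.

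For \eqref{eq:extra1} I would apply \eqref{eq:main2} with the $\ell^\infty$ norm. On the right-hand side, $\|W+I\|_\infty\le\|W\|_\infty+1=\max_i\delta(i)+1\le\delta_{\max}+1$ by \eqref{A:1}, so $\exp(C_2\|W+I\|_\infty t)$ is bounded by a constant depending only on $t$ and $\delta_{\max}$; moreover $\|\mu\|_\infty=\max_i\sqrt{\sum_j w_{ij}^2}\le\sqrt{w_{\max}\,\delta_{\max}}$, as computed in the text. Multiplying these bounds yields $\max_i\tilde D_i(t)\le C_1\sqrt{w_{\max}}$ with $C_1=C_1(t,\delta_{\max})$, and halving gives \eqref{eq:extra1}.

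For \eqref{eq:extra2} I would instead use the $\ell^2$ norm together with Cauchy--Schwarz,
\[
\frac1N\sum_{i=1}^N\tilde D_i(t)\le\left(\frac1N\sum_{i=1}^N\tilde D_i(t)^2\right)^{\!1/2}=\frac1{\sqrt N}\,\|\tilde D(t)\|_2 .
\]
By \eqref{eq:main2}, $\|\tilde D(t)\|_2\le C_1\exp(C_2\|W+I\|_2 t)\,\|\mu\|_2$; by \eqref{eq:2_norm_bound}, $\|W+I\|_2\le\|W\|_2+1\le\sqrt{\delta_{\max}^{\textrm{out}}\delta_{\max}}+1$, so the exponential factor is controlled by a constant depending on $t,\delta_{\max},\delta_{\max}^{\textrm{out}}$, while $\|\mu\|_2=\big(\sum_{i}\sum_{j}w_{ij}^2\big)^{1/2}$. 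Combining, $\tfrac1N\sum_{i=1}^N\tilde D_i(t)\le C\sqrt{\tfrac1N\sum_i\sum_j w_{ij}^2}$, and halving gives \eqref{eq:extra2} with $C_2=C_2(t,\delta_{\max},\delta_{\max}^{\textrm{out}})$.

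There is no real obstacle here, the substantive work being Theorem \ref{t:Main}; the only point that needs care is the bookkeeping of the constant dependencies. The $\ell^\infty$ route invokes only the in-degree regularity \eqref{A:1}, so $C_1$ depends (besides on $t$) on $\delta_{\max}$ alone, whereas the $\ell^2$ route unavoidably brings in the out-degree bound \eqref{eq:out_degree} through \eqref{eq:2_norm_bound}, which is precisely why $C_2$ must also depend on $\delta_{\max}^{\textrm{out}}$.
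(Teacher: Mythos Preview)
Your proposal is correct and follows essentially the same route as the paper: the paper does not give a separate proof section for Theorem~\ref{t:extra} but derives it from the discussion immediately preceding the statement, namely the identity $\pr(\exists\,\tau\in[0,t]:\xi_i(\tau)\neq\hat\xi_i(\tau))=\tfrac12\tilde D_i^{(0)}(t)$ together with \eqref{eq:main2} specialized to the $\ell^\infty$ norm for \eqref{eq:extra1} and to the $\ell^2$ norm (via Cauchy--Schwarz and \eqref{eq:2_norm_bound}) for \eqref{eq:extra2}. Your bookkeeping of the constant dependencies matches the paper's as well.
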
	
  
	So far, we have only accounted for the error between $\xi_i(t)$ and $\hat{\xi}_i(t)$, however, what we are actually interested in is the expectation $\E\left(\hat{\xi}_i(t)\right)=z_i(t)$, the solution of the ODE system given by NIMFA. Thankfully, $\left(\hat{\xi}_{i}(t) \right)_{i \in [N]}$ are independent, hence, their averages must concentrate around the mean:

	\begin{theorem}
		\label{t:fslln}
		Assume \eqref{NIMFA_initial_condition} holds with independent initial conditions. Then \remm{there is a constant $\bar{C}$ such that}  and any $1\leq K\leq N$, 
		\begin{align}
			\label{eq:fslln}
			\E \left[ \sup_{0 \leq \tau \leq  t} \sum_{s \in \S} \left|\frac1K\sum_{i=1}^K\left(\hat{\xi}_{i,s}(\tau)-z_{i,s}(\tau)\right) \right| \right] \leq  \frac{\remm{\bar{C}(1+t)}}{\sqrt{K}}.
		\end{align}	
	\end{theorem}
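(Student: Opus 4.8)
The plan is to fix a state $s\in\S$, recognize the centered empirical average
$M_K^s(\tau):=\frac1K\sum_{i=1}^K(\hat\xi_{i,s}(\tau)-z_{i,s}(\tau))$
as a martingale, control its running supremum via Doob's $L^2$ maximal inequality, and then sum the resulting estimates over $s\in\S$.

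First I would record the martingale structure. By \eqref{NIMFA_initial_condition} together with the computation preceding it, for each $i$ and each $s$ the process $\hat\xi_{i,s}(\cdot)-z_{i,s}(\cdot)$ is a mean-zero martingale for the filtration generated by $\hat\xi_i$. Under the assumed independent initial conditions the processes $\hat\xi_1,\dots,\hat\xi_K$ are independent, so $M_K^s(\cdot)$ is a martingale for the joint filtration $\mathcal{F}_\tau=\sigma(\hat\xi_i(u):1\le i\le N,\ u\le\tau)$; it is right-continuous with left limits (a finite linear combination of pure-jump processes minus a continuous one) and bounded, hence square-integrable, so Doob's inequality applies.

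Next I would apply Doob's $L^2$ maximal inequality, $\E[(\sup_{0\le\tau\le t}|M_K^s(\tau)|)^2]\le 4\,\E[M_K^s(t)^2]$, and evaluate the right-hand side using independence and $z_{i,s}(t)=\E\hat\xi_{i,s}(t)$:
$\E[M_K^s(t)^2]=\frac1{K^2}\sum_{i=1}^K\mathrm{Var}(\hat\xi_{i,s}(t))\le\frac1{K^2}\cdot K\cdot 1=\frac1K$,
where $\mathrm{Var}(\hat\xi_{i,s}(t))\le\E(\hat\xi_{i,s}(t)^2)=\E(\hat\xi_{i,s}(t))\le1$ since $\hat\xi_{i,s}(t)\in\{0,1\}$. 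Hence $\E[(\sup_{0\le\tau\le t}|M_K^s(\tau)|)^2]\le 4/K$, and by Jensen's (Cauchy--Schwarz) inequality $\E[\sup_{0\le\tau\le t}|M_K^s(\tau)|]\le 2/\sqrt K$. Bounding the supremum of a sum by the sum of the suprema and using linearity of expectation, $\E[\sup_{0\le\tau\le t}\sum_{s\in\S}|M_K^s(\tau)|]\le\sum_{s\in\S}\E[\sup_{0\le\tau\le t}|M_K^s(\tau)|]\le 2|\S|/\sqrt K$, which is exactly \eqref{eq:fslln}.

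I expect no serious analytic obstacle here: the substantive input — that $\hat\xi_{i,s}-z_{i,s}$ is a martingale — has already been established, and the remaining points are routine. The mild care needed is in the joint-filtration martingale property (which reduces to independence of the $\hat\xi_i$), the measurability of the running supremum and the applicability of Doob's inequality to the c\`{a}dl\`{a}g process $M_K^s$, and the variance bound (here the crude estimate $\mathrm{Var}(\hat\xi_{i,s}(t))\le1$ already suffices, and could be sharpened to $1/4$ at the cost of replacing $2|\S|$ by $|\S|$).
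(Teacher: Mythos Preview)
Your proposal is correct and follows essentially the same argument as the paper: fix $s$, apply Doob's $L^2$ maximal inequality to the martingale $\frac1K\sum_{i=1}^K(\hat\xi_{i,s}-z_{i,s})$, bound the variance by $1/K$ via independence and $\mathbb{D}^2(\hat\xi_{i,s}(t))\le 1$, then sum over $s\in\S$. Your write-up is in fact a bit more careful about the filtration and c\`adl\`ag issues than the paper's terse version.
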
 
	
	The most natural application of Theorem \ref{t:fslln} is for $K=N$, but it is formulated in a way so that it can be applied to any convenient subset of vertices (the fact that the first $K$ vertices are considered has no significance as the vertices can be reordered arbitrarily).
	
	Together, Theorems \ref{t:Main}, \ref{t:extra} and \ref{t:fslln} give an error bound for the NIMFA approximation.
	\begin{theorem}
		\label{t:xi_z}
		Assume \eqref{NIMFA_initial_condition} holds with independent initial conditions. Then for any $t \geq 0$, there exists a constant $C=C(t,\delta_{\max},R)$ such that
		\begin{align}
			\label{eq:thm4a}
			\sup_{0 \leq \tau \leq t} \E \left(\sum_{s \in \S} \left|\frac1N\sum_{i=1}^N\left(\xi_{i,s}(\tau)-z_{i,s}(\tau)\right) \right| \right)\leq
			C\left(\sqrt{\remm{w_{\max}^{*}}}+\frac{1}{\sqrt{N}}\right).
		\end{align}
		
		Furthermore, if we additionally assume $M=1$, there exist constants $C_1=C_1(t,\delta_{\max}), C_2=C_{2}(t,\delta_{\max},\delta_{\textrm{max}}^{\textrm{out}})$ such that
		\begin{align}
			\label{eq:thm4b}
			\E\left[\sup_{0 \leq \tau \leq t}  \left(\sum_{s \in \S} \left|\frac1N\sum_{i=1}^N\left(\xi_{i,s}(t)-z_{i,s}(t)\right) \right| \right)\right] \leq& C_1\left(\sqrt{\remm{w^{*}_{\max}}}+\frac{1}{\sqrt{N}} \right)
		\end{align}
		and
		\begin{align}
			\label{eq:thm4c}
			\E\left[\sup_{0 \leq \tau \leq t}  \left(\sum_{s \in \S} \left|\frac1N\sum_{i=1}^N\left(\xi_{i,s}(t)-z_{i,s}(t)\right) \right| \right)\right] \leq& C_2\left(\sqrt{\frac{1}{N}\sum_{i=1}^N \sum_{j=1}^N w_{ij}^2}+\frac{1}{\sqrt{N}} \right)
		\end{align}
		where $\mu$ is the same as for Theorem \ref{t:Main}.
	\end{theorem}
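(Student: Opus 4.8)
\textbf{Proof proposal for Theorem \ref{t:xi_z}.}

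The plan is to combine the three previously established estimates by a triangle-inequality argument, splitting the NIMFA error $\frac1N\sum_i(\xi_{i,s}-z_{i,s})$ into a ``coupling'' part $\frac1N\sum_i(\xi_{i,s}-\hat\xi_{i,s})$ and a ``concentration'' part $\frac1N\sum_i(\hat\xi_{i,s}-z_{i,s})$. For the first display \eqref{eq:thm4a}, I would bound
\begin{align*}
\E\left(\sum_{s\in\S}\left|\tfrac1N\sum_{i=1}^N(\xi_{i,s}(\tau)-z_{i,s}(\tau))\right|\right)
\leq \E\left(\tfrac1N\sum_{i=1}^N\sum_{s\in\S}|\xi_{i,s}(\tau)-\hat\xi_{i,s}(\tau)|\right)
+\E\left(\sum_{s\in\S}\left|\tfrac1N\sum_{i=1}^N(\hat\xi_{i,s}(\tau)-z_{i,s}(\tau))\right|\right).
\end{align*}
The first term on the right is at most $\frac1N\sum_i D_i^{(0)}(t)\leq \frac12 D_{\max}(t)$ uniformly in $\tau\leq t$, which is $O(\sqrt{w_{\max}})$ by \eqref{eq:main1} of Theorem \ref{t:Main}. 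The second term: for fixed $\tau$ we would apply Theorem \ref{t:fslln} with $K=N$ (the supremum over $\tau$ there only makes it larger), giving the bound $2|\S|/\sqrt N$. Taking $\sup_{\tau\leq t}$ and adding yields \eqref{eq:thm4a} with $C$ depending on $t,\delta_{\max},R$ through the constant in Theorem \ref{t:Main}.

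For \eqref{eq:thm4b} and \eqref{eq:thm4c}, which are the $M=1$ strengthenings with the supremum inside the expectation, I would use the same split but now estimate each piece in the stronger $\tilde D$-norm. The coupling part is controlled by $\frac1N\sum_i\tilde D_i(t)\leq \|\tilde D(t)\|_\infty$ (or via Cauchy--Schwarz by $\sqrt{\frac1N\|\tilde D(t)\|_2^2}$), and Theorem \ref{t:Main}\eqref{eq:main2} bounds $\|\tilde D(t)\|$ by $C_1\exp(C_2\|W+I\|t)\|\mu\|$; using \eqref{eq:2_norm_bound} the exponential factor is absorbed into a constant depending on $t,\delta_{\max},\delta^{\textrm{out}}_{\max}$, and then $\|\mu\|_\infty\leq\sqrt{w_{\max}\delta_{\max}}$ gives \eqref{eq:thm4b} while $\frac1N\|\mu\|_2^2=\frac1N\sum_{i,j}w_{ij}^2$ gives \eqref{eq:thm4c}. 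The concentration part is handled exactly by Theorem \ref{t:fslln} with $K=N$, which already has the supremum inside the expectation, contributing $2|\S|/\sqrt N$.

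I do not expect a serious obstacle here: the theorem is essentially an assembly of three already-proven lemmas via the triangle inequality, and the only care needed is bookkeeping of constants (and noting that $\E|\cdot|$ of an average over $i$ is dominated by the average of $\tilde D_i$, which requires swapping $\sum_i$ with $\sup_\tau$ and $\E$ in the right order --- legitimate since everything is nonnegative). The mildest subtlety is that in \eqref{eq:thm4a} one can only put $\sup_\tau$ outside $\E$ for the concentration term as stated in Theorem \ref{t:fslln}, whereas Theorem \ref{t:fslln} actually gives the stronger inside version for free, so the weaker outside statement \eqref{eq:thm4a} follows a fortiori; for \eqref{eq:thm4b}--\eqref{eq:thm4c} the inside-$\E$ supremum is available for both terms precisely because $M=1$, which is why the hypothesis is needed there.
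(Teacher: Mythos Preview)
Your proposal is correct and follows essentially the same triangle-inequality assembly as the paper: split $\xi-z$ into the coupling piece $\xi-\hat\xi$ (controlled by Theorem~\ref{t:Main}) and the concentration piece $\hat\xi-z$ (controlled by Theorem~\ref{t:fslln}), with the $M=1$ versions \eqref{eq:thm4b}--\eqref{eq:thm4c} using \eqref{eq:main2} in place of \eqref{eq:main1}. The only small clarification: for \eqref{eq:thm4b} you should invoke the $\ell^\infty$ bound $\|W\|_\infty\le\delta_{\max}$ rather than \eqref{eq:2_norm_bound}, so that the resulting constant $C_1$ depends only on $(t,\delta_{\max})$ as stated, reserving \eqref{eq:2_norm_bound} (and hence the $\delta_{\max}^{\textrm{out}}$ dependence) for the $\ell^2$ route to \eqref{eq:thm4c}.
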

	
	\begin{figure}[t]
		\centerline{\includegraphics[scale=0.50]{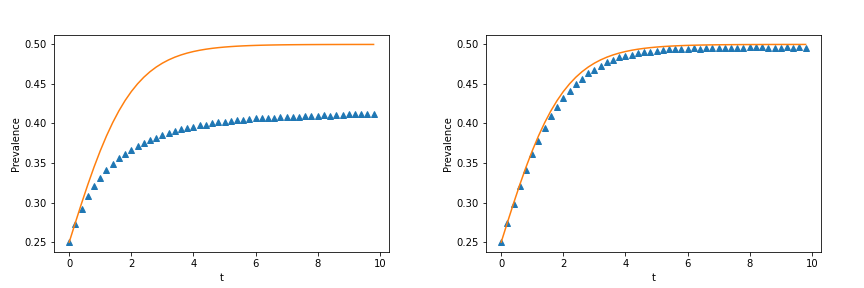}}
		\caption{The ratio of infected based on the average of $1000$ simulations (triangles) compared to the estimate of NIMFA (solid line) on an $N=1000$ vertex modified cycle graphs with the closest $10$ (left) and $100$ (right) neighbors being connected. ($\beta=2, \gamma=1$) As we increase the degrees NIMFA performs better.}
		\label{fig_curves}
	\end{figure}

	\begin{figure}[t]
	\centerline{\includegraphics[scale=0.50]{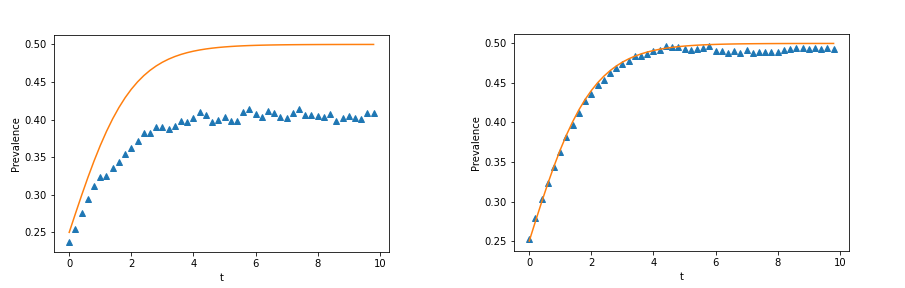}}
	\caption{The ratio of infected based on the average of $10$ simulations (triangles) compared to the estimate of NIMFA (solid line) on an $N=5000$ vertex modified cycle graphs with the closest $10$ (left) and $100$ (right) neighbors being connected. ($\beta=2, \gamma=1$) As we increase the degrees NIMFA performs better.}
	\label{fig_curves2}
\end{figure} 

	\subsection*{Related works}
	
	In this section we compare our results to the recent independent work of Sridhar and Kar \cite{Sridhar_Kar1, Sridhar_Kar2} and Parasnis et al. \cite{age_structured}.
	
	In \cite{Sridhar_Kar1} the authors describe how the state densities of certain related stochastic processes on weighted graphs with doubly symmetric matrix $W$ can be approximated by a set of $O(N)$ ODEs analogous to NIMFA given that the normalized Frobenius norm $\frac{1}{N}\sum_{i=1}\sum_{j=1}^N w_{ij}^2$ is small and $N$ is large.
	
	Given the conclusions of Theorem 4.2 in \cite{Sridhar_Kar1} and Theorem \ref{t:xi_z} in the present paper are very similar in nature, it makes sense to compare the general setup, the conditions, the conclusions and the technique directly to those in the present paper.
	
	
	
	Setup. Strictly speaking, the stochastic processes discussed in the present paper and in \cite{Sridhar_Kar1, Sridhar_Kar2} are different. In our work, time is continuous while \cite{Sridhar_Kar1} and \cite{Sridhar_Kar2} start from discrete time steps then speed up time. This is a minor difference though, and with appropriate time scaling, the models in \cite{Sridhar_Kar1, Sridhar_Kar2} and the present paper define essentially the same object.
	
	Conditions. In the present paper, we require only that the normalized degrees are bounded. This is more general than the doubly stochastic $W$ assumption of \cite{Sridhar_Kar1, Sridhar_Kar2}. Specifically, our result also justifies Example 4.2 in \cite{ Sridhar_Kar2}.

	Via \eqref{eq:thm4c}, qualitatively the same type of error terms were retained in terms of the normalized Frobenius norm, but \cite{Sridhar_Kar1, Sridhar_Kar2} provides an error probability bound that is exponential in $N$. In the present paper, we do not focus on this kind of large deviation bound in $N$.
	
	
	\cite{Sridhar_Kar1, Sridhar_Kar2} derive bounds for the global average. On the other hand, our results show more localized, uniform bounds in terms of vertices. This is made possible by the use of the auxiliary Markov processes $\hat{\xi}_{i}(t)$, allowing accurate predictions about individual vertices too, not just global averages. 
	
	Our framework also allows higher order interactions, while \cite{Sridhar_Kar1, Sridhar_Kar2} is restricted to order 2 interactions (graphs).
	
	In \cite{age_structured} the authors study the SIR process in age-structured populations on time-varying networks. They show that when $N$ and the rewiring rate is high the prevalence of the age groups can be described via an ODE system analogous to the metapopulation NIMFA model \eqref{eq:meta_NIFMA} in Section \ref{s:meta}. Note that \cite{age_structured} applies to cases with fast, but finite rewiring rates as well, while our result only considers the idealized case of infinite rewiring rates.

	\section{Further reductions to NIMFA}
	\label{s:red}
	
	This section relates NIMFA to other approaches from the literature. Although NIMFA is a major reduction of the exact Kolmogorov-equations, requiring only $O(N)$ ODEs to be solved, it can be still computationally prohibitive when the number of vertices is too large. Furthermore, NIMFA requires knowing both the full network structure and precise initial conditions for all vertices. We look at further reductions to \eqref{eq:NIMFA} when additional structure is known for the network or initial conditions; several of these actually lead to other well-known models from the literature.
	
	\subsection{Homogeneous mean field approximation}
	\label{s:HMFA}
	
	The homogeneous mean field approximation (HMFA) assumes that the vertices are \emph{well mixed}, meaning, every vertex interacts with every other with equal weights. Formally, this can be this can be described by a complete hypergraph (with all loops and secondary loops):
	\begin{align*}
		w_{i,\underline{j}}^{(m)}=\frac{1}{N^m}.
	\end{align*}
	
	This definition may be generalized to include cases when $w_{i, \underline{j}}^{(m)}=0$ for certain $m$ indices, e.g. $(M+1)$-uniform hypergraphs. For ease of notation, instead of modifying the definition of $w_{i, \underline{j}}^{(m)}$, it is also possible to choose the rate functions $q_{ss'}(\phi)$ so that they do not depend on the appropriate $\phi^{(m)}$ coordinates, making the choice of $w_{i,\underline{j}}^{(m)}$ irrelevant.
	
	\remm{
	
	\begin{remark}
	\label{r:w_bar}
	Let $\xi_1, \dots, \xi_m, \eta_1, \dots, \eta_m$ be i.i.d. uniform variables from the index set $[N]$. Then 
	\begin{align*}
	\frac{1}{\left(N^m \right)^2} \sum_{\substack{\ul{i}, \ul{j} \in [N]^m \\ \ul{j} \cap \ul{k} \neq \emptyset}} 1=\pr \left( \ul{\xi} \cap \ul{\eta} \neq \emptyset \right) \leq & \sum_{j,k=1}^{m} \pr \left(\xi_{j} =\eta_{k} \right)=\frac{m^2}{N}, \\
	\geq & \pr \left(\xi_{1}=\eta_{1} \right)=\frac{1}{N},
	\end{align*}
	
implying
\begin{align*}
	\frac{1}{\left(N^m \right)^2} \sum_{\substack{\ul{i}, \ul{j} \in [N]^m \\ \ul{j} \cap \ul{k} \neq \emptyset}} 1 \asymp \frac{1}{N} .
\end{align*}
	\end{remark}
	}

	For such networks, $\remm{w_{\max}^{*} \asymp}\frac{1}{N}$ and $\delta_{\max}=1$. What remains to show is that \eqref{A:2} holds with some bounded $R$.
	\begin{align}
		\label{eq:diag_bound}
		\begin{split}
			&\sum_{\substack{\underline{j} \in [N]^m \\ \underline{j} \textit{ is s.\ loop}}}w_{i, \underline{j}}^{(m)}= \frac{1}{N^m} \left|\left \{\left. \underline{j} \in [N]^m \right| \underline{j} \textrm{ s.\ loop} \right\} \right|=\\
			&1-\frac{1}{N^m} \left|\left \{\left. \underline{j} \in [N]^m \right| \underline{j} \textrm{ not s. loop} \right\} \right|=1-\prod_{l=0}^{m-1}\left(1-\frac{l}{N} \right)=\\
			&O \left(\frac{1}{N}\right) \ll \sqrt{\remm{w_{\max}^{*}}},
		\end{split}
	\end{align}
	hence, $R$ can be chosen arbitrarily small for large enough $N$.
	
	Our goal now is to derive a small system of equations for
	\begin{align*}
		u(t):=\frac{1}{N}\sum_{i=1}^N z_{i}(t).
	\end{align*}
	Our strategy is based on the observation that the neighbourhood vectors $\zeta_{i}(t)$ are the same for all vertices.
	\begin{align*}
		\zeta_{i,\underline{s}}^{(m)}(t)=&\frac{1}{N^m}\sum_{\underline{j} \in [N]^m} \prod_{l=1}^{m}z_{j_l,s_l}(t)=\prod_{l=1}^m \left(\frac{1}{N}\sum_{j_l=1}^N z_{j_l,s_l}(t) \right)=\\
		&\prod_{l=1}^{m}u_{s_l}(t)=:u_{\underline{s}}^{(m)}(t)
	\end{align*}
	
	This results in the ODE system: 
	\begin{align}
		\label{eq:HMFA}
		\begin{split}
			\frac{\d}{\d t}u(t)=& Q \left(U(t) \right)u(t),\\
			U(t)=& \left(u^{(m)}(t) \right)_{m=1}^M, \\
			u^{(m)}(t)=& \left(u_{\underline{s}}^{(m)}(t) \right)_{\underline{s}\in \S^m}=\left(\prod_{l=1}^{m}u_{s_l}(t) \right)_{\underline{s}\in \S^m}.
		\end{split}
	\end{align}
	
	For example, the simplicial SIS model \eqref{eq:HMFA} takes the form
	\begin{align*}
		\frac{\d}{\d t} u_{I}(t)=-\gamma u_{I}(t)+\left(1-u_{I}(t) \right)\sum_{m=1}^{M} \beta_{m}u_{I}^{m}(t). 
	\end{align*}
	which was used in \cite{simplicial2}.

	In this setting, Theorem \ref{t:xi_z} shows the ratio of vertices in state $s \in \mathcal{S}$ can be approximated by $u_s(t)$ with $O \left( \frac{1}{\sqrt{N}} \right)$ error. The well known results of Kurtz \cite{kurtz70, kurtz78} correspond to the $M=1$ case.
	
	\subsubsection*{Regular hypergraphs}
	
	Although \eqref{eq:HMFA} is both feasible for analytical and numerical investigations (due to its finite size) the assumption that the network structure is well-mixed is quite restrictive. However, as we will see, the well-mixed condition can be relaxed given uniform initial conditions.
	
	We call a weighted hypergraph \emph{regular} if
	\begin{align}
		\label{def:regular}
		\forall \ 1 \leq i \leq N, \ 1 \leq m \leq M  \ \ \delta^{(m)}(i)=1.
	\end{align}
	Note that the value $1$ is arbitrary and any other constant value would work with minor modifications to the rate functions $q_{ss'}$.
	
	We note that \eqref{def:regular} always holds for Convention 2 hypergraphs. For Convention 1, it holds when $d^{(m)}(i)=\bar{d}^{(m)}\,\, \forall 1 \leq i \leq N, \ 1 \leq m \leq M$  (that is, the hypergraph is regular in the usual sense).
	
	\begin{prop}
		\label{p:uniform}
		
		Assume \eqref{def:regular} and
		$$ z_{i}(0)=u(0)\quad \forall \ 1 \leq i \leq N$$ for some $u(0) \in \Delta^{\S}.$ Then the solution of \eqref{eq:NIMFA} takes the form
		$$\ z_{i}(t)=u(t)\quad \forall \ 1 \leq i \leq N $$
		where $u(t)$ satisfies \eqref{eq:HMFA}. 
	\end{prop}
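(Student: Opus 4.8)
The plan is to verify that the profile which is constant in the vertex index, $z_i(t)\equiv u(t)$, solves the NIMFA system \eqref{eq:NIMFA}, and then to conclude by the uniqueness part of Theorem \ref{t:NIMFA_Delta}. First I would let $u(t)$ be the solution of \eqref{eq:HMFA} with initial value $u(0)$; since HMFA is exactly NIMFA on the well-mixed hypergraph (as noted in Section \ref{s:HMFA}, which satisfies \eqref{A:1} and \eqref{A:2}), Theorem \ref{t:NIMFA_Delta} gives a unique global solution with $u(t)\in\Delta^{\S}$ for all $t$. Define $\tilde z_i(t):=u(t)$ for every $i\in[N]$; then $\tilde z_i(0)=u(0)=z_i(0)$ by hypothesis and $\tilde z_i(t)\in\Delta^{\S}$.

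The key step is to compute the NIMFA neighbourhood vectors for the profile $\tilde z$. For every $i\in[N]$, $1\leq m\leq M$ and $\ul s\in\S^m$,
\begin{align*}
	\zeta_{i,\ul s}^{(m)}(t)=\sum_{\ul j\in[N]^m}w_{i,\ul j}^{(m)}\prod_{l=1}^m\tilde z_{j_l,s_l}(t)=\left(\prod_{l=1}^m u_{s_l}(t)\right)\sum_{\ul j\in[N]^m}w_{i,\ul j}^{(m)}=\delta^{(m)}(i)\,u_{\ul s}^{(m)}(t)=u_{\ul s}^{(m)}(t),
\end{align*}
where the factorization uses that $\prod_{l=1}^m u_{s_l}(t)$ does not depend on $\ul j$, and the last equality uses regularity \eqref{def:regular}. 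Hence $\zeta_i(t)=U(t)$ for every $i$, so substituting $\tilde z$ into \eqref{eq:NIMFA} gives $\frac{\d}{\d t}\tilde z_i(t)=Q(\zeta_i(t))\tilde z_i(t)=Q(U(t))u(t)=\frac{\d}{\d t}u(t)$, which holds by \eqref{eq:HMFA}. Thus $(\tilde z_i)_{i\in[N]}$ is a global $\Delta^{\S}$-valued solution of \eqref{eq:NIMFA} with the prescribed initial condition.

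Finally, by the uniqueness statement of Theorem \ref{t:NIMFA_Delta}, $z_i(t)=\tilde z_i(t)=u(t)$ for all $i$ and all $t\geq0$, and $u$ satisfies \eqref{eq:HMFA} by construction, which is the claim. I do not expect a genuine obstacle here: the only point requiring care is the collapse of the weighted sum $\sum_{\ul j}w_{i,\ul j}^{(m)}$ to $\delta^{(m)}(i)=1$, which hinges on the NIMFA product (independence) closure making $u_{\ul s}^{(m)}(t)$ a product of the coordinates $u_{s_l}(t)$ and hence independent of $\ul j$ — exactly the place where the regularity assumption \eqref{def:regular} is used.
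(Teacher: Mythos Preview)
Your proof is correct and follows essentially the same route as the paper's: define the vertex-constant profile $z_i(t)=u(t)$, use regularity \eqref{def:regular} to collapse $\sum_{\ul j}w_{i,\ul j}^{(m)}\prod_l u_{s_l}(t)$ to $u_{\ul s}^{(m)}(t)$, and verify \eqref{eq:NIMFA} directly. Your version is slightly more explicit in invoking the uniqueness part of Theorem~\ref{t:NIMFA_Delta} and in justifying the existence of $u(t)$, but the argument is the same.
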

	We mention that statements similar to Proposition \ref{p:uniform} have appeared in the literature before in certain special cases \cite[Proposition 3.18 ]{Simon_book}. Combining Proposition \ref{p:uniform} with Theorem \ref{t:Main} ensures the accuracy of the homogeneous mean field approximation on regular graphs with large degrees and homogeneous initial conditions disregarding any further network structure.
	
	\begin{proof}(Proposition \ref{p:uniform})
		
		Let $u(t)$ be the solution of \eqref{eq:HMFA}. Set $z_{i}(t)=u(t).$ We have to show that $z_i(t)$ satisfies \eqref{eq:NIMFA}. The initial conditions are satisfied according to the assumption, and for the derivatives,
		\begin{align*}
			u_{\underline{s}}^{(m)}(t)=&u_{\underline{s}}^{(m)}(t)  \delta^{(m)}(i)=u_{\underline{s}}^{(m)}(t) \sum_{\underline{j} \in [N]^m} w_{i,\underline{j}}^{(m)}=\sum_{\underline{j} \in [N]^m} w_{i,\underline{j}}^{(m)}z_{\underline{j},\underline{s}}^{(m)}(t)=\zeta_{i,\underline{s}}^{(m)}(t),\\
			\frac{\d}{\d t}z_{i}(t)=&\frac{\d }{\d t} u(t)=Q \left(U(t) \right)u(t)=Q \left(\zeta_{i}(t)\right)z_{i}(t).
		\end{align*}
	\end{proof}
	
	\subsection{Metapopulation models}
	\label{s:meta}
	
	As we saw in Section \ref{s:HMFA} \rem{,} a way to reduce the number of equations is by grouping vertices together and representing them by a single averaged-out term. In practice, this approach will only work if the vertices grouped together are sufficiently homogeneous, which is typically not the case for the entire population. To mitigate this issue, we may introduce \emph{communities}, inside which we assume homogeneity, then derive the dynamics between communities. This "higher resolution" may increase accuracy, at the cost of a larger ODE system. 
	
	In practice, the communities can be chosen by demographic and geographic criterion such as age and location. Alternatively, it is also possible to group vertices according to degree, or a third option is the use of community detection algorithms \cite{SzemerediAlgorithm}.
	
	We present the general setup for metapopulation models first for graphs in Section \ref{sss: metagraph}, then for hypergraphs in Section \ref{sss: metahypergraph}\rem{.}
	
	For the SIS process on graphs similar results had been derived in \cite{SISpartition}.
	\subsubsection{Metapopulation models on graphs}
	\label{sss: metagraph}
	
	First, assume $M=1$. Divide the vertices into a partition $V_1,\dots,V_{K}$  with size $\left|V_k\right|=N_k$ such that vertices inside a group are similar in some sense. The average weight between group $V_k$ and $V_l$ is
	\begin{align}
		\label{def:tilde_w1}
		\tilde{w}_{kl}=\frac{\sum_{i \in V_k}\sum_{j \in V_l} w_{ij}}{N_k N_l}.
	\end{align}
	(In the idealized case of metapopulations, $w_{ij}$ would have the same value $\tilde{w}_{kl}$ for each $i \in V_k, j \in V_l$ pair.)
	
	Next we derive the dynamics for the averages
	\begin{align}
		\label{def:bar_z1}
		\bar{z}_{k}(t):=\frac{1}{N_k}\sum_{i \in V_k}  z_{j}(t).
	\end{align}
	
	$\zeta_i(t)$ has the same value $\bar{\zeta}_k(t)$ for all $i \in V_k$:
	\begin{align}
		\label{def:pre_theta1}
		\bar{\zeta}_{k}(t)=\zeta_{i}(t)=\sum_{j=1}^N w_{ij}z_{j}(t)= \sum_{l=1}^K \underbrace{N_l\tilde{w}_{kl}}_{\bar{w}_{kl}} \frac{1}{N_l}\sum_{j \in V_l}z_j(t)=\sum_{l=1}^K \bar{w}_{kl}\bar{z}_l(t).
	\end{align}
	Therefore, we can derive an ODE system for \eqref{def:bar_z1} 
	\begin{align}
		\label{eq:meta_NIFMA}
		\frac{\d}{\d t}\bar{z}_k(t)=Q\left(\bar{\zeta}_k(t) \right)\bar{z}_{k}(t)
	\end{align}
	which is equivalent to \eqref{eq:NIMFA} on the graph $\overline{\mathcal{G}}$ with vertex set $\{1, \dots, K\}$ and weights $\left(\bar{w}_{kl}\right)_{k,l=1}^K.$

	\subsubsection{Metapopulation models on hypergraphs}
	\label{sss: metahypergraph}
	
	For the general metapopulation setting, we assume that for each $m=1,\dots,M$, the population is partitioned into \emph{local groups} $V_1^{(m)},\dots,V_{K^{(m)}}^{(m)}$. The \emph{type} of a vertex will be denoted by $k=\left(k^{(1)}, \dots, k^{(M)} \right)$, which means that for each $m=1,\dots,M$, the given vertex is in the local group $V_{k^{(m)}}^{(m)}$. Vertices can be partitioned according to their type into $\prod_{m=1}^M K^{(m)}$ \emph{global groups}.
	
	We aim to define a hypergraph on the types, with weights consistent with the average of weights within each group. That said, with the above setup, this is easier to do using local groups for each $m=1,\dots,M$.
	
	For a given $m$, $k^{(m)}$ and $\underline{l}^{(m)}=\left(l_1^{(m)}, \dots, l_m^{(m)} \right)$, the \emph{total local $m$-weight between $k^{(m)}$ and $\underline{l}^{(m)}$} is defined as
	\begin{align}
		W_{k^{(m)},\underline{l}^{(m)}}^{(m)}:=\sum_{i \in V_{k^{(m)}}^{(m)}}\sum_{j_1 \in V_{l_1^{(m)}}^{(m)}} \dots \sum_{j_m \in V_{l_m^{(m)}}^{(m)}}w_{i,\underline{j}}^{(m)}.
	\end{align}
	
	Then, using the notation
	$$N_{\underline{l}^{(m)}}:=\prod_{r=1}^{M}N_{l_r^{(m)}}^{(m)},$$
	we define the weight of the edge containing the local groups $k^{(m)}, \underline{l}^{(m)}$ as
	\begin{align}
		\tilde{w}_{k^{(m)},\underline{l}^{(m)}}^{(m)}:=\frac{W_{k^{(m)},\underline{l}^{(m)}}^{(m)}}{N_{k^{(m)}}N_{\underline{l}^{(m)}}}.
	\end{align}
	
	Let $k(i)=\left(k^{(1)}(i), \dots, k^{(M)}(i) \right)$ denote the type of $i$. For easier notation, we will often use $\iota \sim U \left([N] \right)$, which is a random vertex independent from everything else. Then we define the average of $z_i(t)$ over type $k$ as
	\begin{align}
		\label{def:bar_z2}
		\bar{z}_{k}(t):= \E \left(\left.z_{\iota}(t) \right| k(\iota)=k \right)=\frac{1}{N_k}\sum_{i \in V_k}z_{i}(t).	
	\end{align}
	
	In this case as well, $\zeta_i(t)$ has the same value for all $i \in V_k$; this common value will be denoted by $\bar{\zeta}_k(t).$ Let $\iota_1, \dots, \iota_m$ denote i.i.d. copies of $\iota.$ Then
	\begin{align}
		\label{eq:zeta_bar}
		\begin{split}
			\bar{\zeta}_k^{(m)}(t)=& \zeta_i^{(m)}(t)=\sum_{j \in [N]^m}w_{i, \underline{j}}^{(m)}z_{\underline{j}}^{(m)}(t)=\sum_{\underline{l}^{(m)}}\tilde{w}_{k,\underline{l}^{(m)}}^{(m)}\sum_{j_1 \in V_{l_1^{(m)}}^{(m)}} \dots \sum_{j_m \in V_{l_m^{(m)}}^{(m)}}z_{\underline{j}}^{(m)}(t) \\
			=& \sum_{\underline{l}^{(m)}} \underbrace{N_{\underline{l}^{(m)}}\tilde{w}_{k,\underline{l}^{(m)}}^{(m)}}_{:=\bar{w}_{k^{(m)},\underline{j}^{(m)}}^{(m)}} \E \left(\left. \prod_{r=1}^{m}z_{\iota_r}(t) \right| k^{(m)}(\iota_1)=l_1^{(m)}, \dots, k^{(m)}(\iota_{m})=l_m^{(m)} \right)=\\
			=& \sum_{\underline{l}^{(m)}} \bar{w}_{k^{(m)},\underline{j}^{(m)}}^{(m)}\prod_{r=1}^{m}\E \left(\left. z_{\iota}(t) \right| k^{(m)}(\iota)=l_r^{(m)} \right)\rem{.}
		\end{split}
	\end{align}
	
	This means that the ODE system for \eqref{def:bar_z2} is formally the same as \eqref{eq:meta_NIFMA} (with the appropriate definition of $\bar{z}_k(t)$ and $\bar{\zeta}_k(t)$)\rem{.}
	
	Note that $\bar{\zeta}_k(t)$ can also be expressed via $\bar{z}_{k}(t)$ as 
	\begin{align*}
		\E \left(\left. z_{\iota}(t) \right| k^{(m)}(\iota)=l_r^{(m)} \right)=&\E \left(\left. \E \left(\left.z_{\iota}(t) \right| k(i)=k \right) \right| k^{(m)}(\iota)=l_r^{(m)} \right)=\\
		&\E \left(\left. \bar{z}_{k(\iota)}(t) \right| k^{(m)}(\iota)=l_r^{(m)} \right),
	\end{align*}
	making \eqref{eq:meta_NIFMA} a closed system.
	
	In the special case when the hypergraph is $(M+1)$-uniform, we can set $K^{(m)}=1$ for all $m<M$ virtually making the local group $k^{(M)}$ and the global group $k$ the same (apart from some $1$'s in the first $M-1$ components). In this case, $Q$ only depends on $\bar{\zeta}^{(M)}(t)$ which can be expressed as
	\begin{align*}
		\bar{\zeta}_{k^{(M)}}^{(M)}=\sum_{\underline{l}^{(m)}}\bar{w}_{k^{(m)},\underline{l}^{(m)}}^{(m)}\prod_{r=1}^{m}\bar{z}_{k^{(M)}(l_r)}(t).
	\end{align*}

	\subsection{Annealed networks}
	
	So far, we only focused on the dynamics of the Markov process neglecting the dynamics of the network itself. When there is a separation of scale between the speed of the Markov process and the changes to the network itself, two kinds of idealizations are typically used: 
	\begin{itemize}
		\item \emph{quenched networks}: the speed at which the network changes is much slower than the Markov process. In this case, the network is assumed constant in time.
		\item \emph{annealed networks}: the speed at which the network changes is much faster than the Markov process. In this case, we consider the network changes averaged out for the interactions.
	\end{itemize}
	
	Annealed networks can be modeled by replacing connections $a_{i, \underline{j}}^{(m)}$ in \eqref{eq:conv1} and \eqref{eq:conv2} with the average $\langle a \rangle_{i, \underline{j}}^{(m)}$.
	
	In this section, we present a setup for annealed networks generated via the configuration model \cite{configuration}. Similar calculations can be made for other models that include e.g.\ degree correlation  such as equation (93) in \cite{annealed_formula}.
	
	Once again, we start with the graph case.
	
	In the configuration model the degrees $d(1), \dots, d(N)$ are given beforehand, and vertex $i$ receives $d(i)$ half-edges (\emph{stubs}) initially. Then in each round, we choose two stubs at random to connect and form an edge, repeating this procedure until all stubs are paired.
	
	Loops and multiple edges are possible, but their effect will be neglected. The expected connection between vertices $i$ and $j$ is
	\begin{align*}
		\langle a \rangle_{ij}=\frac{d(i)d(j)}{\bar{d}N}.
	\end{align*}
	
	The degree of each vertex $i$ indeed matches the prescribed $d(i)$ as
	\begin{align*}
		\sum_{j=1}^N \langle a \rangle_{ij}=\frac{d(i)}{\bar{d}} \frac{1}{N}\sum_{j=1}^N d(j)=d(i).
	\end{align*}
	
	$\langle a \rangle_{ij}$ depends only on the degrees of $i$ and $j$, so it can be interpreted as a metapopulation model where vertices are grouped according to their degree. (Note that here we also use the index $k=0$ for isolated vertices if any.) The corresponding weights are
	\begin{align*}
		\tilde{w}_{kl}=\frac{kl}{\bar{d}^2 N},
	\end{align*}
	for Convention 1, and 
	\begin{align*}
		\tilde{w}_{kl}=\frac{l}{\bar{d}N}.
	\end{align*}
	for Convention 2.
	
	Let $q_k:= \frac{k N_k}{\bar{d} N}$ denote the size biased degree distribution and introduce
	\begin{align}
		\label{def:theta1}
		\Theta(t):=\sum_{l=0}^{d_{\max}} q_{l}\bar{z}_{l}(t).
	\end{align}
	
	Using \eqref{def:pre_theta1}, $\bar{\zeta}_k(t)$ can be written as
	
	\begin{align*}
		\bar{\zeta}_k(t)= \frac{k}{\bar{d}} \Theta(t),
	\end{align*}
	for Convention 1, and 
	\begin{align*}
		\bar{\zeta}_k(t)=  \Theta(t).
	\end{align*}
	for Convention 2.
	
	For example, the I component of the SIS process assuming Convention 1 is
	\begin{align}
		\label{eq:Vesp}
		\begin{split}
			\frac{\d}{\d t}\bar{z}_{k,I}(t)&= -\gamma \bar{z}_{k,I}(t)+\frac{\beta}{\bar{\bar{d}}} k \left(1-\bar{z}_{k,I}(t) \right)\Theta_I(t) \rem{,} \\ 
			\Theta_I(t)&=\sum_{l=0}^{d_{\textrm{max}}} q_{l}\bar{z}_{l,I}(t) \rem{.}
		\end{split}
	\end{align}
	which is the Inhomogeneous Mean Field Approximation (IMFA) studied by Pastor-Satorras and Vespignani \cite{vesp}.
	
	For Convention 1, to apply the results of the present paper, we need to assume upper regularity, i.e. $\delta_{\max}=\frac{d_{\max}}{\bar{d}}$ to be bounded. In many applications, the degree distribution converges to a fixed distribution, making $\bar{d}$ bounded; in such a setting, we accordingly require $d_{\max}$ to be bounded as well.
	
	Assuming upper regularity,
	\begin{align*}
		w_{\max}= \frac{d_{\max}^2}{\bar{d}^2 N}=\frac{1}{N}\delta_{\max}^2
	\end{align*}
	thus Theorem \ref{t:xi_z} actually provides an $O\left( \frac{1}{\sqrt{N}}\right)$ error bound.
	
	As for Convention 2, $\delta_{\max}=1$ holds as usual, and
	\begin{align*}
		w_{\max}=\frac{1}{N}\frac{d_{\max}}{\bar{d}}.
	\end{align*}
	
	Unfortunately, one can not relax the bound on $d_{\textrm{max}}$ by using \eqref{eq:extra2} instead of \eqref{eq:extra1} as it requires bounds for the out-degrees:
		\begin{align*}
		 \delta^{\textrm{out}}(j)=\sum_{i=1}^N w_{ij}= \sum_{k=0}^{d_{\textrm{max}}} N_k \frac{d(j)}{\bar{d}N}=\frac{d(j)}{\bar{d}} \leq \frac{d_{\textrm{max}}}{\bar{d}} \leq \delta_{\textrm{max}}^{\textrm{out}}.
		\end{align*}

	
	Now we turn to the hypergraph case $M>1$. We generalize the notion of the configuration model in the following manner: For a fixed $m$, the $m$-degrees are given as $d^{(m)}(1), \dots, d^{(m)}(N)$ and each vertex receives $m$-stubs based on their degree. In each round, we choose $m+1$ $m$-stubs at random to form an $m$-edge, then repeat this procedure until all of the stubs have been paired. This procedure is performed for each $1 \leq m \leq M$ independently.
	
	For distinct $i, j_1, \dots j_m$, the probability of connecting them in a given round is
	\begin{align*}
		\frac{d^{(m)}(i) \prod_{r=1}^m d^{(m)}(j_r)}{{\bar{d}^{(m)}N \choose m+1}} \approx \frac{(m+1)! d^{(m)}(i) \prod_{r=1}^m d^{(m)}(j_r)}{\left(\bar{d}^{(m)}N\right)^{m+1}}.
	\end{align*} 
	Since there are $\frac{\bar{d}^{(m)}N}{m+1}$ rounds in total, we set
	\begin{align*}
		\langle a \rangle_{i,\underline{j}}^{(m)}:=\frac{m! d^{(m)}(i) \prod_{r=1}^m d^{(m)}(j_r)}{\left(\bar{d}^{(m)}N\right)^{m}}.
	\end{align*}
	
	For the hypergraph case, we only examine Convention 1, for which
	\begin{align*}
		\tilde{w}_{k^{(m)}, \underline{l}^{(m)} }^{(m)}=\frac{k^{(m)}}{\bar{d}^{(m)}}\frac{  \prod_{r=1}^m l_r^{(m)} }{\left(\bar{d}^{(m)}N\right)^{m}}.
	\end{align*}
	
	Once again, the resulting hypergraph can be interpreted as a metapopulation model, where the local groups are given according to the $m$-degrees of the vertices.
	
	Clearly $\delta^{(m)}(i)= \frac{d^{(m)}(i)}{\bar{d}^{(m)}},$ so we make an upper regularity assumption in this case as well, from which $w_{\max}=O \left( \frac{1}{N} \right)$ follows.
	
	For hypergraphs,
	
	$$\remm{ \frac{1}{\left(\bar{d}^{(m)} \right)^{m+1}N^m}} \leq \tilde{w}_{k^{(m)}, \underline{l}^{(m)} }^{(m)} \leq \frac{\delta_{\max}^{m+1}}{N^m},$$ 
	so \eqref{eq:diag_bound}
	\remm{making $w_{\max}^{*} \asymp \frac{1}{N}$ according to Remark \ref{r:w_bar}. As for condition \eqref{A:2}}
	\begin{align}
		\label{eq:diag_bound2}
		\sum_{\substack{\underline{j} \in [N]^m \\ \underline{j} \textit{ is s.\ loop}}}w_{i, \underline{j}}^{(m)} \leq C \sum_{\substack{\underline{j} \in [N]^m \\ \underline{j} \textit{ is s.\ loop}}} \frac{1}{N^m} =O \left( \frac{1}{N}\right) \ll \sqrt{ \remm{ w_{\max}^{*}}},
	\end{align}
	hence arbitrarily small $R$ can be used for large enough $N$.
	
	The next step is to calculate $\bar{\zeta}_k(t)$ based on \eqref{eq:meta_NIFMA}. Define
	$$q_{k^{(m)}}^{(m)}:= \frac{k^{(m)}N_{k^{(m)}}}{ \bar{d}^{(m)} N},$$ 
	the size-biased degree distribution of the $m$-vertices. Also define
	\begin{align}
		\label{def:Theta2}
		\Theta^{(m)}(t):= \sum_{l=1}^{d_{\max}^{(m)}}q_{l}^{(m)}\E \left(\left.z_{\iota}(t) \right| d^{(m)}(\iota)=l \right),
	\end{align}
	once again using the notation $\iota \sim U \left([N] \right)$.
	
	Using \eqref{eq:zeta_bar},
	\begin{align*}
		\bar{\zeta}_{k}^{(m)}(t)&=\sum_{\underline{l}^{(m)}} \bar{w}_{k^{(m)},\underline{j}^{(m)}}^{(m)}\prod_{r=1}^{m}\E \left(\left. z_{\iota}(t) \right| d^{(m)}(\iota)=l_r^{(m)} \right)=\\
		&= \frac{k^{(m)}}{\bar{d}^{(m)}}\sum_{\underline{l}^{(m)}} \prod_{r=1}^{m}q_{l_r}^{(m)}\E \left(\left.z_{\iota}(t) \right| d^{(m)}(\iota)=l_r \right)\\
		&= \frac{k^{(m)}}{\bar{d}^{(m)}} \prod_{r=1}^{m}\sum_{l_r=1}^{d_{\textrm{max}}^{(m)}}q_{l_r}^{(m)}\E \left(\left.z_{\iota}(t) \right| d^{(m)}(\iota)=l_r \right)=\\
		&= \frac{k^{(m)}}{\bar{d}^{(m)}} \left(\Theta^{(m)}(t) \right)^m.
	\end{align*}
	
	Accordingly, e.g. the dynamics for the simplicial SIS model can be written as
	\begin{align}
		\label{eq:IHMFA_simplicial}
		\frac{\d}{\d t}\bar{z}_{k,I}(t)=-\gamma \bar{z}_{k,I}(t)+(1-\bar{z}_{k,I}(t))\sum_{m=1}^{M} \frac{\beta^{(m)}}{\bar{d}^{(m)}}\left(\Theta_{I}^{(m)}(t) \right)^{m} \rem{.}
	\end{align}
	
	\eqref{eq:IHMFA_simplicial} was studied in \cite{simplicial3} for the $(M+1)$-uniform case, where $\E \left(\left.z_{\iota}(t) \right| d^{(M)}(\iota)=l \right)$ simplifies to $\bar{z}_{k}(t)$ as the global class $k$ and the local class $k^{(M)}$ coincide. 
	
	\subsection{Activity-driven networks}
	
	Activity-driven networks  were introduced in  \cite{activity1}.
	
	Let $a_1, \dots, a_K$ be positive numbers called \emph{activities} and let $a(i)$ denote the activity of vertex $i$. Instead of a graphs structure, each vertex chooses a random vertex uniformly with rate $\beta a(i)$ and if they are an SI pair, the susceptible node becomes infected. Recoveries happen independently with rate $\gamma.$
	
	The above model corresponds to an SIS process on the weighted graph
	\begin{align*}
		w_{ij}=\frac{a(i)+a(j)}{N}
	\end{align*} 
	since  to form the $(i,j)$ pair, either $i$ or $j$ needs to activate, and each vertex is chosen with probability $\frac{1}{N}.$ The graph is a metapopulation model, with groups corresponding to the activity values.
	
	We generalize this concept to allow higher order interactions. $a_{1}^{(m)}, \dots, a_{K^{(m)}}^{(m)}$ are the possible $m$-activities and we assume that vertex $i$ chooses $m$ other vertices at random with rate $a^{(m)}(i).$ This results in a hypergraph with weights
	\begin{align*}
		w_{i,\underline{j}}^{(m)}= \frac{1}{N^m}\left(a_{i}^{(m)}+\sum_{r=1}^{m}a_{j_r}^{(m)} \right).
	\end{align*}
	
	Assume the activity rates are bounded from above by some $a_{\max}<\infty.$ Also, introduce
	$$\bar{a}^{(m)}:=\frac{1}{N}\sum_{i=1}^{N}a^{(m)}(i). $$
	Then
	\begin{align*}
		\delta^{(m)}(i)= a_{i}^{(m)} +\frac{1}{N^m}\sum_{\underline{j} \in [N]^m} \sum_{r=1}^{m}a_{j_r}^{(m)} =a_{i}^{(m)}+\bar{a}^{(m)} \leq 2a_{\max}
	\end{align*}
	so \eqref{A:1} is satisfied.
	
	\remm{Clearly,
	\begin{align*}
	\frac{a_{i}^{(m)}}{N^{m}} \leq w_{i, \ul{j}}^{(m)} \leq \frac{(m+1)a_{max}}{N^{m}}
	\end{align*}
	}
	\remm{ making $w_{\max}^{*} \asymp \frac{1}{N}$.}  \eqref{eq:diag_bound2} is applicable here as well satisfying \eqref{A:2}, hence Theorem \ref{t:Main} applies.
	
	$\bar{\zeta}_k(t)$ can also be expressed with the help of \eqref{eq:zeta_bar}.
	\begin{prop}
		\label{p:activity}
		Let $\iota \sim U([N])$ a random index and $p^{(m)}_{k^{(m)}}$ be the ratio of vertices in the local group $k^{(m)}.$ Also, define
		\begin{align*}
			\psi^{(m)}(t):= \sum_{l=1}^{K^{(m)}}a_{l}^{(m)}p_{l}^{(m)}\E \left(\left. z_{\iota}(t) \right| a^{(m)}(\iota)=l^{(m)} \right).
		\end{align*}
		
		Then the neighborhood vectors have the form
		\begin{align*}	
			\bar{\zeta}^{(m)}_k(t)=\left(a_{k^m}^{(m)}\E \left(z_{\iota}(t)\right)+\psi^{(m)}(t) \right)\E^{m-1} \left(z_{\iota}(t)\right).
		\end{align*}	
	\end{prop}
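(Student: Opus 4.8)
\textbf{Proof proposal for Proposition \ref{p:activity}.}

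The plan is to start from the metapopulation formula \eqref{eq:zeta_bar} specialized to the activity-driven hypergraph, identify the local groups with the activity classes, and then expand the product using the explicit form of $\bar{w}^{(m)}_{k^{(m)},\underline{l}^{(m)}}$ that comes from $w_{i,\underline{j}}^{(m)}= \frac{1}{N^m}\bigl(a_i^{(m)}+\sum_{r=1}^m a_{j_r}^{(m)}\bigr)$. First I would compute $\tilde{w}^{(m)}_{k^{(m)},\underline{l}^{(m)}}$ and hence $\bar{w}^{(m)}_{k^{(m)},\underline{l}^{(m)}}=N_{\underline{l}^{(m)}}\tilde{w}^{(m)}_{k^{(m)},\underline{l}^{(m)}}$. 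Summing $w_{i,\underline{j}}^{(m)}$ over $i\in V^{(m)}_{k^{(m)}}$ and $j_r\in V^{(m)}_{l_r^{(m)}}$ gives two kinds of terms: the $a_i^{(m)}$-term contributes $a^{(m)}_{k^{(m)}}N_{k^{(m)}}\prod_r N_{l_r^{(m)}}/N^m$ (using that activity is constant on a local group), while each $a_{j_r}^{(m)}$-term contributes $a^{(m)}_{l_r^{(m)}}N_{k^{(m)}}\prod_{r'} N_{l_{r'}^{(m)}}/N^m$. After dividing by $N_{k^{(m)}}N_{\underline{l}^{(m)}}$ and multiplying by $N_{\underline{l}^{(m)}}$, one obtains
\begin{align*}
\bar{w}^{(m)}_{k^{(m)},\underline{l}^{(m)}}=\frac{1}{N^m}\Bigl(a^{(m)}_{k^{(m)}}+\sum_{r=1}^m a^{(m)}_{l_r^{(m)}}\Bigr)\prod_{r=1}^m N_{l_r^{(m)}},
\end{align*}
so that $\frac{1}{N^m}\prod_r N_{l_r^{(m)}}=\prod_r p^{(m)}_{l_r^{(m)}}$.

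Next I would substitute this into the last line of \eqref{eq:zeta_bar}. Writing $\zeta_l:=\E\!\left(z_\iota(t)\mid a^{(m)}(\iota)=l^{(m)}\right)$ and $p_l:=p^{(m)}_l$ for brevity, the sum over $\underline{l}^{(m)}$ factorizes into a sum of $m+1$ product-type terms. The term carrying the factor $a^{(m)}_{k^{(m)}}$ gives $a^{(m)}_{k^{(m)}}\bigl(\sum_l p_l\zeta_l\bigr)^m$, and since $\sum_l p_l\zeta_l=\E(z_\iota(t))$ by the tower property, this equals $a^{(m)}_{k^{(m)}}\E^m(z_\iota(t))$. Each of the $m$ terms carrying a factor $a^{(m)}_{l_r^{(m)}}$ gives, by relabeling, $\bigl(\sum_l a^{(m)}_l p_l \zeta_l\bigr)\bigl(\sum_l p_l\zeta_l\bigr)^{m-1}=\psi^{(m)}(t)\,\E^{m-1}(z_\iota(t))$, and there are exactly $m$ of them. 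Collecting everything,
\begin{align*}
\bar{\zeta}^{(m)}_k(t)=\Bigl(a^{(m)}_{k^{(m)}}\E(z_\iota(t))+m\,\psi^{(m)}(t)\Bigr)\E^{m-1}(z_\iota(t)).
\end{align*}
This is the claimed identity up to the combinatorial factor $m$ in front of $\psi^{(m)}$; I would recheck the normalization of $w_{i,\underline{j}}^{(m)}$ and the counting of ordered $m$-tuples — depending on whether the $\underline{j}$ appearing in \eqref{eq:phidef} and \eqref{eq:zeta_bar} is ordered or symmetrized, the $m$ may be absorbed, reconciling with the stated form. The only genuine subtlety, and the main thing to get right, is exactly this bookkeeping of symmetry factors (the $\frac{1}{m!}$ conventions, ordered versus unordered tuples, and which vertex is the "special" first one), since the analytic content is otherwise just the tower property and the factorization of a product over independent coordinates.
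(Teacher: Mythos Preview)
Your approach is exactly the one the paper takes: start from the metapopulation formula \eqref{eq:zeta_bar}, compute $\bar{w}^{(m)}_{k^{(m)},\underline{l}^{(m)}}$ for the activity-driven weights, and split the sum according to the additive decomposition $a^{(m)}_{k^{(m)}}+\sum_{r=1}^m a^{(m)}_{l_r^{(m)}}$. Your computation of $\bar{w}^{(m)}_{k^{(m)},\underline{l}^{(m)}}=\bigl(a^{(m)}_{k^{(m)}}+\sum_r a^{(m)}_{l_r^{(m)}}\bigr)\prod_r p^{(m)}_{l_r^{(m)}}$ matches the paper's \eqref{eq:activity_calc}, and the treatment of the $a^{(m)}_{k^{(m)}}$-term and of a single $a^{(m)}_{l_{r'}^{(m)}}$-term is identical to the paper's.

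The discrepancy you flag is real. The paper computes the contribution of one fixed $r'$ and obtains $\psi^{(m)}(t)\,\E^{m-1}(z_\iota(t))$, then writes the final answer without the factor $m$ coming from the sum over $r'=1,\dots,m$. Your result $\bigl(a^{(m)}_{k^{(m)}}\E(z_\iota(t))+m\,\psi^{(m)}(t)\bigr)\E^{m-1}(z_\iota(t))$ is what the computation actually gives; there is no hidden $1/m!$ or unordered-tuple convention here to absorb it, since $w_{i,\underline{j}}^{(m)}=\frac{1}{N^m}\bigl(a_i^{(m)}+\sum_r a_{j_r}^{(m)}\bigr)$ is defined on ordered tuples and \eqref{eq:zeta_bar} sums over ordered $\underline{l}^{(m)}$. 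For $m=1$ both formulas agree, which is why the graph case \cite{activity2} checks out. So you need not hedge: your bookkeeping is correct, and the missing $m$ is an oversight in the paper's statement and final combination step, not in your proposal.
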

	
	The proof of Proposition \ref{p:activity} is given in Section \ref{s:proofs}.
	
	For activity-driven networks, the simplicial SIS model takes the form
	\begin{align}
		\begin{split}
			\label{eq:activity_SIS}
			\frac{\d}{\d t} \bar{z}_{k,I}(t)=&-\gamma\bar{z}_{k,I}(t)+
			\left(1-\bar{z}_{k,I}(t) \right) \cdot \\
			&\sum_{m=1}^{M} \beta_{m}\E^{m-1} \left(z_{\iota,I}(t)\right)\left(a_{k^m}^{(m)}\E \left(z_{\iota,I}(t)\right)+\psi_I^{(m)}(t) \right).
		\end{split}
	\end{align}
	\cite{activity2} proves that \eqref{eq:activity_SIS} describes the large graph limit correctly when $M=1$.

	\subsection{Dense graphs and Szemer\'edi's regularity lemma}
	
	We call a hypegraph dense if there is some $0<p_0 \leq 1$ such that
	\begin{align}
		\label{def:dense}
		\bar{d}^{(m)} \geq p_0N^m\quad \forall \ 1 \leq m \leq M.
	\end{align}
	
	For Convention 1 graphs,
	\begin{align*}
		\frac{1}{M! \ N}\leq w_{\max} \leq& \frac{1}{p_0 N},\\
		\delta_{\max} \leq & \frac{1}{p_0}
	\end{align*}
	hold and \eqref{eq:diag_bound2} directly follows, satisfying the conditions for Theorem \ref{t:Main}.
	
	We focus on the graph case $M=1$. We assume that the rate functions $q_{ss'}$ are \emph{affine}, that is, they have the form
	\begin{align}
		\label{eq:linear}
		q_{ss'}\left(\phi\right)=q_{ss'}^{(0)}+\sum_{r \in \S}q_{ss',r}^{(1)}\phi_{r}\rem{,}
	\end{align}
	where $q_{ss'}^{(0)}, \left(q_{ss',r}^{(1)} \right)_{r \in \S}$ are nonnegative constants. Many epidemiological models have this form, including the SIS process.
	
	As it was pointed out in the preliminary work \cite{unified_meanfield}, Szemer\'edi's regularity lemma \cite{Szemeredi} provides a method to approximate \eqref{eq:NIMFA} with a finite system up to arbitrary precision (for large enough $N$).
	
	Roughly speaking, Szemer\'edi's regularity lemma states that any large enough dense graph can be partitioned into finitely many ``boxes'' (called an $\varepsilon$-regular partition) which have the same size (except one remainder box), and besides a few exceptional pairs the edge count between two boxes behaves as if coming from a randomly mixed graph, with error at most $\varepsilon$.
	
	We denote  an $\varepsilon$-regular partition by $V_0,V_1, \dots, V_K$, where $V_0$ is the exceptional set. 
	\begin{align*}
		e(A,B):=\sum_{i \in A} \sum_{j \in B} a_{ij}
	\end{align*}
	refers to the number of edges between the vertex sets $A,B$ with the convention that edges in $A \cap B$ are counted double. 
	
	We define the graph $\overline{\mathcal{G}}$ on vertices $\left(V_1, \dots, V_K\right)$. ($V_0$ is neglected.)   
	
	The adjacency matrix is replaced by the edge density between $A,B \subseteq [N]$ defined as
	\begin{align}
		\label{eq:rho}
		\rho(A,B):=\frac{e\left(A,B\right)}{\left|A\right| \cdot \left|B \right|}
	\end{align}
	It is easy to see that $0 \leq \rho\left(A,B\right) \leq 1.$
	
	The adjacency matrix counterpart for $\overline{\mathcal{G}}$ is simply the edge density between the $V_1,\dots,V_K$ sets. For the average degree we further define
	\begin{align}
		\label{eq:p}
		p:=& \frac{\bar{d}}{N}, \\
		\label{eq:kappa}
		\kappa:=&\frac{\left| V_1\right|}{N}=\dots = \frac{\left| V_K\right|}{N}
	\end{align}
	where $p$ is the global edge density of $\mathcal{G}$ and $\kappa$ is the portion of vertices one box contains. The average degree in $\overline{\mathcal{G}}$ is $K p \approx \frac{p}{\kappa},$ motivating the definition of the weights
	\begin{align}
		\label{eq:w_bar}
		\bar{w}_{kl}:= \frac{\kappa}{p} \rho \left(V_{k},V_{l} \right).
	\end{align}
	
	The corresponding solution of \eqref{eq:NIMFA} on the graph $\overline{\mathcal{G}}$ with weights \eqref{eq:w_bar} is denoted by $\left(v_{k}(t) \right)_{k=1}^{K}$ with initial condition
	\begin{align}
		\label{eq:v(0)} 
		v_{k}(0)=\frac{1}{|V_k|}\sum_{ i \in V_k}z_{i}(0).
	\end{align} 
	
	Finally, we define
	\begin{align}
		\label{eq:v_bar}
		\bar{v}(t):= \sum_{k=1}^{K} \frac{\left|V_k \right|}{N}v_{k}(t)
	\end{align}
	and the average global density vector
	\begin{align}
		\label{eq:z_bar}
		\bar{z}(t):=\frac{1}{N}\sum_{i=1}^{N}z_{i}(t).
	\end{align}
	
	\begin{theorem}
		\label{t:Szemeredi}
		$\forall T>0, \varepsilon>0, p_0>0 \, \exists K_{\max} \in \mathbb{Z}^{+}$ such that for any $\mathcal{G}$ simple graph with density parameter $p_0$ and $N \geq K_{\textrm{max}} $, there exists a partition $V_0, V_1, \dots, V_K$ with $K \leq  K_{\max}$ such that
		\begin{itemize}
			\item $\left|V_1 \right|= \dots =\left|V_K \right|\rem{,}$
			\item $\left|V_0 \right| \leq  \varepsilon N$,
			\item \rem{ $\sup_{0 \leq t \leq T} \left \|\bar{z}(t)-\bar{v}(t) \right \|_1 \leq  \varepsilon.$}
		\end{itemize}
	\end{theorem}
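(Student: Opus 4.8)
This is a purely deterministic statement comparing the NIMFA system \eqref{eq:NIMFA} on $\mathcal{G}$ with the reduced system on $\overline{\mathcal{G}}$, so no probabilistic ingredient enters. Write $z_i(t)$ for the $\Delta^{\S}$-valued solution of \eqref{eq:NIMFA} on $\mathcal{G}$ (Theorem \ref{t:NIMFA_Delta}), $p=\bar d/N$ from \eqref{eq:p}, $\kappa=|V_1|/N$ from \eqref{eq:kappa}, $\bar z_k(t)=\frac1{|V_k|}\sum_{i\in V_k}z_i(t)$, and $e_k(t)=\bar z_k(t)-v_k(t)$. By \eqref{def:dense} we have $p\ge p_0$. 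The plan is to: (1) invoke Szemer\'edi's regularity lemma with a parameter $\varepsilon'=\varepsilon'(\varepsilon,T,p_0)$ chosen at the very end, and also with at least $\lceil 1/\varepsilon'\rceil$ parts so that $\kappa\le\varepsilon'$; (2) use the affine form \eqref{eq:linear}, i.e. $Q(\phi)=Q^{(0)}+\sum_{r\in\S}\phi_r Q^{(1)}_r$ with fixed nonnegative matrices, to close the block-averaged equation for $\bar z_k(t)$ up to a regularity error; (3) run a Gr\"onwall estimate on the $N$-weighted error $\bar E(t):=\sum_k\frac{|V_k|}{N}\|e_k(t)\|_1$; (4) add the trivial $|V_0|/N\le\varepsilon'$ contribution of the exceptional set.

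\textbf{Step (2): block averaging via regularity.} Averaging \eqref{eq:NIMFA} over $i\in V_k$ and inserting \eqref{eq:linear} together with $\zeta_{i,r}(t)=\frac1{\bar d}\sum_{j\in[N]}a_{ij}z_{j,r}(t)$ (Convention 1), the nonlinearity becomes, in its $s$-component, the bilinear quantity $\frac1{|V_k|\bar d}\sum_{i\in V_k}\sum_{j\in[N]}a_{ij}z_{i,s}(t)z_{j,r}(t)$ in the functions $z_{\cdot,s}(t),z_{\cdot,r}(t):[N]\to[0,1]$. Splitting the $j$-sum over $V_0,V_1,\dots,V_K$ and applying the bounded-function form of $\varepsilon'$-regularity -- for an $\varepsilon'$-regular pair $(V_k,V_l)$ and $f,g:[N]\to[0,1]$ one has $\big|\sum_{i\in V_k,\,j\in V_l}a_{ij}f(i)g(j)-\rho(V_k,V_l)(\sum_{V_k}f)(\sum_{V_l}g)\big|\le\varepsilon'|V_k||V_l|$, which follows from the layer-cake identity $f=\int_0^1\1{f>t}\,\d t$ and definitions \eqref{eq:rho}, \eqref{eq:w_bar} -- one obtains
\begin{align}
\label{eq:blockavg}
\frac{\d}{\d t}\bar z_k(t)=Q^{(0)}\bar z_k(t)+\sum_{r\in\S}Q^{(1)}_r\Big(\bar z_k(t)\sum_{l=1}^K\bar w_{kl}\bar z_{l,r}(t)\Big)+A_k(t),
\end{align}
where $A_k(t)=\sum_r Q^{(1)}_r A_{k,r}(t)$ collects the error terms: the $V_0$-block, bounded entrywise by $|V_0|/\bar d\le\varepsilon'/p_0$; the $\varepsilon'$-regular pairs, each contributing $\le\varepsilon'|V_l|/\bar d=\varepsilon'\kappa/p$; and the $\le\varepsilon'K^2$ irregular pairs together with the $K$ diagonal blocks $l=k$ (which regularity does not control), each bounded trivially by $\le 2|V_l|/\bar d=2\kappa/p$. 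Using $\kappa K\le1$ and $\kappa\le\varepsilon'$, summing these bounds gives, uniformly in $t$,
\begin{align*}
\sum_{k}\tfrac{|V_k|}{N}\|A_k(t)\|_1=\kappa\sum_k\|A_k(t)\|_1\le C\,\varepsilon'/p_0,
\end{align*}
with $C$ depending only on $|\S|$ and the (fixed) rate functions.

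\textbf{Steps (3)--(4): Gr\"onwall and conclusion.} The identical computation applied to $v_k$ on $\overline{\mathcal{G}}$ gives $\frac{\d}{\d t}v_k=Q^{(0)}v_k+\sum_rQ^{(1)}_r\big(v_k\sum_l\bar w_{kl}v_{l,r}\big)$, so subtracting this from \eqref{eq:blockavg} and using $\bar z_k(t),v_k(t)\in\Delta^{\S}$ (the latter by Theorem \ref{t:NIMFA_Delta}, since $v_k(0)=\bar z_k(0)$ by \eqref{eq:v(0)}, whence also $e_k(0)=0$) one obtains, with constants $C_1,C_2$ depending only on the rate functions and on $p_0$,
\begin{align*}
\|e_k(t)\|_1\le\int_0^t\Big(C_1\|e_k(\tau)\|_1+C_2\sum_{l}\bar w_{kl}\|e_l(\tau)\|_1+\|A_k(\tau)\|_1\Big)\d\tau.
\end{align*}
Multiplying by $|V_k|/N$, summing over $k$, and bounding the coupling by $\sum_k\tfrac{|V_k|}{N}\sum_l\bar w_{kl}\|e_l(\tau)\|_1\le\big(\max_l\sum_k\bar w_{kl}\big)\bar E(\tau)\le\tfrac1{p_0}\bar E(\tau)$ -- all column sums of $(\bar w_{kl})$ are at most $\tfrac1p\le\tfrac1{p_0}$ -- gives $\bar E(t)\le\int_0^t\big((C_1+C_2/p_0)\bar E(\tau)+C'\varepsilon'/p_0\big)\d\tau$ with $\bar E(0)=0$; Gr\"onwall yields $\sup_{t\le T}\bar E(t)\le C''\varepsilon'$ with $C''=C''(\text{rates},T,p_0,|\S|)$. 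Since $\bar z(t)-\bar v(t)=\tfrac1N\sum_{i\in V_0}z_i(t)+\sum_k\tfrac{|V_k|}{N}e_k(t)$ by \eqref{eq:v_bar}--\eqref{eq:z_bar}, we get $\sup_{t\le T}\|\bar z(t)-\bar v(t)\|_1\le\varepsilon'+C''\varepsilon'$; choosing $\varepsilon'=\varepsilon/(1+C'')$ makes this $\le\varepsilon$. Then $K_{\max}$ is the part-count bound the regularity lemma outputs for this $\varepsilon'$ (with at least $\lceil1/\varepsilon'\rceil$ parts), which depends only on $\varepsilon,T,p_0$, and the hypothesis $N\ge K_{\max}$ (enlarged if necessary to the lemma's threshold) guarantees the partition exists.

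\textbf{Main obstacle.} The delicate point is Step (2): passing from the vertex-level bilinear sums to the reduced weights $\bar w_{kl}$ requires only a bounded-function corollary of $\varepsilon'$-regularity, but one must control both the irregular pairs and the \emph{a priori} uncontrolled diagonal blocks $(V_k,V_k)$. A per-vertex bound on $\|A_k\|_1$ is hopeless (a single $V_k$ may meet many irregular pairs); the remedy is to work throughout with the $N$-weighted quantities $\bar E$ and $\kappa\sum_k\|A_k\|_1$, exploiting that $(\#\,\text{irregular pairs})\cdot\kappa^2\le\varepsilon'(\kappa K)^2\le\varepsilon'$ and $K\kappa^2\le\kappa\le\varepsilon'$, and likewise routing the Gr\"onwall coupling through the column sums $\sum_k\bar w_{kl}\le1/p_0$ rather than the row sums.
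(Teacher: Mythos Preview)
Your proof is correct and follows the same high-level architecture as the paper: apply Szemer\'edi's lemma with a small parameter $\varepsilon'$ and at least $1/\varepsilon'$ parts, derive the block-averaged NIMFA equation with a regularity error term, and close by Gr\"onwall on the $N$-weighted error $\kappa\sum_k\|\bar z_k-v_k\|_1$. The one substantive difference lies in how you pass from the vertex-level bilinear sums $\sum_{i\in V_k,j\in V_l}a_{ij}z_{i,s'}(t)z_{j,r}(t)$ to the block densities $\rho(V_k,V_l)$. The paper introduces auxiliary independent Markov chains $\eta_i(t)$ with marginals $z_i(t)$, so that these bilinear sums become expectations of edge counts between genuine \emph{subsets} $V_{k,s'}(t)=\{i\in V_k:\eta_{i,s'}(t)=1\}$, to which the standard set-form of regularity applies directly; the expectation is then discarded since the resulting bound is deterministic. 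You instead invoke the bounded-function corollary of $\varepsilon'$-regularity via the layer-cake identity, which is the textbook deterministic route and dispenses with the artificial randomization altogether --- in line with your opening remark that the statement is purely deterministic. Both devices produce the same $O(\varepsilon')$ bound on $\kappa\sum_k\|A_k\|_1$ (the paper's $\kappa\sum_k\|g_k\|_1$), and the remaining Lipschitz estimate, Gr\"onwall step, and bookkeeping for $V_0$, the diagonal blocks, and the irregular pairs are essentially identical to the paper's.
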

	The proof is provided in Section \ref{s:proofs}.
	
	Szemer\'edi's regularity lemma also guarantees that such a partition can be found in polynomial time \cite{SzemerediAlgorithm}.
	
	We note that $K_{\max}$ may increase rapidly as $\varepsilon \to 0^{+}$  limiting the applicability of the approach. That said, for networks with extra community structure, this approach may still be useful.
	
	\section{Discussion}
	\label{s:Discussion}

	In this paper we examined the accuracy of the so called N-Intertwined Mean Field Approximation on hypergraphs. The idea of NIMFA is to assume vertices are independent from each other, then derive the dynamics of the occupation probabilities of each vertex. This leaves us with and ODE system of size $O(N)$ instead of an exponentially increasing system given by the exact Kolmogorov equations.  
	
	Our findings show that when the incoming weights are well distributed -- for example, vertices typically have large degrees -- then NIMFA gives an accurate approximation. Under additional assumptions we showed how the number of ODEs can be further reduced to give well-known approximation methods from the literature, such as the heterogenous mean field approximation. Finally, we showed how Szemer\'edy's regularity lemma can be used to reduce the number of equations to constant order (depending only on the error desired) for large enough dense graphs.
	
	These results have their limitations. The error bounds work poorly for truly sparse graphs (with bounded average degrees). Analyzing such systems probably requires qualitatively different approaches.
	
	The upper regularity condition can be restrictive for certain applications. We conjecture that the results could be greatly generalized in this direction for degree distributions with fast decaying tails.

	For the reduction for dense graph we applied the strong version of Szemer\'edy's lemma. The weak version of Szemer\'edy's lemma, however, has more desirable algorithmic properties and a smaller bound on the number of "boxes" one needs for a given $\varepsilon$. Extending the theorem in this direction might be beneficial for large, inhomogeneous, dense systems.
	
	Finally, NIMFA has the disadvantage of requiring full knowledge of the network which is usually not possible in practice. Using metapopulation networks instead mitigates this problem, and also greatly reduces the number of equations required. This method, however, relies on the assumption that the metapopulation dynamics is close enough to the original one. Further research is needed to understand how well coarse graining performs in terms of preserving the network dynamics.

	\section{Proofs}
	\label{s:proofs}
	
	\subsection{General proofs}
	
	We state and prove a technical lemma first which will be used throughout other proofs.
	
	\begin{lemma}
		\label{l:technical}
		Let $a_{1},\dots,a_{n}$ and $b_{1},\dots,b_{n}$ two sets of numbers such that $0 \leq \left|a_i \right|, \left|b_i \right| \leq 1$. Then
		\begin{align*}
			\left|\prod_{i=1}^{n}a_{i}-\prod_{i=1}^n b_i \right| \leq \sum_{i=1}^n \left|a_i-b_i \right|.
		\end{align*}
	\end{lemma}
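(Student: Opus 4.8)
The plan is to prove the inequality by a telescoping decomposition of the difference of products (an induction on $n$ works equally well). First I would introduce the hybrid products
\[
P_k := \left(\prod_{i=1}^{k} a_i\right)\left(\prod_{i=k+1}^{n} b_i\right), \qquad k = 0, 1, \dots, n,
\]
with the convention that an empty product equals $1$, so that $P_0 = \prod_{i=1}^n b_i$ and $P_n = \prod_{i=1}^n a_i$. The quantity we want to bound then telescopes:
\[
\prod_{i=1}^n a_i - \prod_{i=1}^n b_i = P_n - P_0 = \sum_{k=1}^{n}\left(P_k - P_{k-1}\right).
\]

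The key step is that $P_k$ and $P_{k-1}$ differ only in whether the $k$-th factor is $a_k$ or $b_k$, so
\[
P_k - P_{k-1} = \left(\prod_{i=1}^{k-1} a_i\right)\left(a_k - b_k\right)\left(\prod_{i=k+1}^{n} b_i\right).
\]
Since $\left|a_i\right| \le 1$ and $\left|b_i\right| \le 1$ for every $i$, both products flanking $(a_k - b_k)$ have absolute value at most $1$, so $\left|P_k - P_{k-1}\right| \le \left|a_k - b_k\right|$. Applying the triangle inequality to the telescoping sum yields
\[
\left|\prod_{i=1}^n a_i - \prod_{i=1}^n b_i\right| \le \sum_{k=1}^{n}\left|P_k - P_{k-1}\right| \le \sum_{k=1}^{n}\left|a_k - b_k\right|,
\]
which is exactly the claim.

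Alternatively, an induction on $n$ gives the same conclusion: the case $n=1$ is trivial, and for the inductive step one writes $\prod_{i=1}^n a_i - \prod_{i=1}^n b_i = a_n\left(\prod_{i=1}^{n-1} a_i - \prod_{i=1}^{n-1} b_i\right) + (a_n - b_n)\prod_{i=1}^{n-1} b_i$, bounds $\left|a_n\right| \le 1$ and $\left|\prod_{i=1}^{n-1} b_i\right| \le 1$, and invokes the inductive hypothesis on the first term. There is no genuine obstacle in either route; the only things requiring a little care are the bookkeeping with empty products at the endpoints $k=0$ and $k=n$ and the elementary observation that $\left|ab\right| \le 1$ whenever $\left|a\right|,\left|b\right| \le 1$, which is what licenses the bounds on the side factors.
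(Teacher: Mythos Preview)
Your proof is correct and takes essentially the same approach as the paper: the paper proceeds by induction on $n$, splitting off the $n$-th factor exactly as in your ``alternatively'' paragraph, and your telescoping argument is simply the unrolled version of that same induction. There is nothing to add.
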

	
	\begin{proof}(Lemma \ref{l:technical})
		
		The proof is by induction on $n$. The statement is trivial for $n=1$. For $n>1$,
		\begin{align*}
			\left|\prod_{i=1}^{n}a_{i}-\prod_{i=1}^n b_i \right| =&\left|a_n\prod_{i=1}^{n-1}a_{i}-b_n\prod_{i=1}^{n-1} b_i \right| =\\
			&\left|\left(a_n -b_n\right)\prod_{i=1}^{n-1}a_{i}+b_n\left(\prod_{i=1}^{n-1}a_{i}-\prod_{i=1}^{n-1} b_i \right) \right| \leq \\
			&\left|a_n -b_n\right|\prod_{i=1}^{n-1}\left|a_{i}\right|+\left|b_n\right| \cdot\left|\prod_{i=1}^{n-1}a_{i}-\prod_{i=1}^{n-1} b_i \right|  \leq \\
			&\left|a_n -b_n\right|+\left|\prod_{i=1}^{n-1}a_{i}-\prod_{i=1}^{n-1} b_i \right|  \leq \left|a_n -b_n\right|+\sum_{i=1}^{n-1}\left|a_i-b_i \right|=\\
			&\sum_{i=1}^{n}\left|a_i-b_i \right|.
		\end{align*}
	\end{proof}
	
	Next we show that \eqref{eq:NIMFA} exhibits a unique global solution.
	\begin{proof}(Theorem \ref{t:NIMFA_Delta})
		
		The right hand side of \eqref{eq:NIMFA} is locally Lipschitz, so there is a unique local solution.
		
		Instead of $q_{ss'}$, we use the modified rate functions	
		\begin{align}
			\label{eq:q_hat}
			\hat{q}_{ss'}(\phi):=& \left|q_{ss'}(\phi) \right|\\
			\nonumber
			\hat{q}_{ss}(\phi)=&-\sum_{s' \neq s}\hat{q}_{s's}(\phi)
		\end{align}	
		which are nonnegative for any input; note that $\left. \hat{q}_{ss'}(\phi) \right|_{\phi \geq 0} =\left. q_{ss'}(\phi) \right|_{\phi \geq 0}.$
		
		The modified version of \eqref{eq:NIMFA} is
		\begin{align*}
			\frac{\d}{\d t} \hat{z}_{i}(t)=\hat{Q} \left( \hat{\zeta}_{i}(t) \right)\hat{z}_i(t)
		\end{align*}
		where $\hat{Q}(\phi)=\left(\hat{q}_{ss'}(\phi) \right)_{s,s' \in \S}.$ The local solution uniquely exist in this case as well, and it either extends to a global solution or blows up at a finite time.
		
		Assume that the local solution blows up at time $t_0$. Then $\hat{\zeta}_{i}(t)$ is well-defined for any $t<t_0.$ 
		
		We construct an auxiliary time-inhomogeneous Markov process on $[0,t_0)$. The state space is $\S$ and the transition  rates at time $t$ are given by the matrix  $\hat{Q}\left( \hat{\zeta}_{i}(t)\right)$.  $p_s(t)$ denotes the probability of being in state $s \in \S.$ The Kolmogorov equations have the form
		\begin{align*}
			\frac{\d }{\d t} p(t)=\hat{Q}\left( \hat{\zeta}_i(t)\right)p(t).
		\end{align*}
		
		Since $\hat{Q} \left(\hat{\zeta}_i(t)\right)$ is continuous for $t<t_0$,
		$$\max_{0 \leq \tau \le t} \left \| \hat{Q} \left(\hat{\zeta}_i(\tau)\right) \right \|$$ exists and is finite.
		
		Based on Grönwall's inequality,
		\begin{align*}
			\hat{z}_{i}(t)-p(t)=& \hat{z}_{i}(0)-p(0) + \int_{0}^{t} \hat{Q}\left( \hat{\zeta}_i(u) \right) \left[\hat{z}_{i}(\tau)-p(\tau) \right]  \d \tau, \\
			\left \|\hat{z}_{i}(t)-p(t)\right \|=& \left \|\hat{z}_{i}(0)-p(0)\right \| +\sup_{0 \leq u \leq t} \left \|\hat{Q}\left( \hat{\zeta}_i(\tau) \right) \right\| \int_{0}^{t} \left \| \hat{z}_{i}(\tau)-p(\tau) \right \|  \d \tau, \\
			\sup_{0 \leq \tau \leq t}	\left \|\hat{z}_{i}(\tau)-p(\tau)\right \| \leq& \left \|\hat{z}_{i}(0)-p(0)\right \| \exp \left(\sup_{0 \leq \tau \leq t} \left \|\hat{Q}\left( \hat{\zeta}_i(\tau) \right) \right\| \cdot t\right).
		\end{align*}
		Choosing $p(0)=\hat{z}_i(0)$ shows that $\hat{z}_{i}(t)=p(t)$ for any $0 \leq t <t_0$ as well.
		
		But $p(t)$ is a probability vector, that is, $\hat{z}_{i}(t) \in \Delta^{\S}$, which contradicts $\hat{z}_{i}(t)$ blowing up as $t\to t_0$, so the solution must be global.
		
		Since the solution is on the simplex $\Delta^{S}$, we have $\hat{q}_{ss'}\left(\hat{\zeta}_i(t) \right)=q_{ss'}\left(\hat{\zeta}_i(t) \right)$ (that is, the absolute values in \eqref{eq:q_hat} are not necessary). Therefore $\hat{z}_i(t)$ is a solution for the original equation \eqref{eq:NIMFA} as well. Since the solution for \eqref{eq:NIMFA} is unique, $\hat{z}_{i}(t)=z_{i}(t).$ This makes $z_i(t)$ a global solution with values on the simplex $\Delta^{S}.$  
	\end{proof}
	
	\subsection{Proof of Theorem \ref{t:Main} \remm{ and \ref{t:fslln}}}
	
	The strategy of the proof is to derive an inequality for $D_{\max}(t)$ and $\tilde{D}_{i}(t)$ such that Grönwall's inequality could be applied.
	
	\remm{
	\begin{lemma}
		\label{c:1}
		Assume $M=1$. Let $\left( \omega_j \right)_{j\in [N]}$ be arbitrary non-negative weights. There exists a constant $\bar{C}=\bar{C}(q_{max})$ such that
		\begin{align*}
			\E \left( \sup_{0 \leq  \tau \leq t} \left | \sum_{j \in [N]} \omega_j \left(\hat{\xi}_{j,s}(\tau)-z_{j,s}(\tau) \right) \right|\right) \leq \bar{C}(1+t)\sqrt{\sum_{j \in [N]} \omega_j^2}.
		\end{align*}
	\end{lemma}
	
	\begin{proof} (Lemma \ref{c:1})
		\begin{align*}
			&\sum_{j \in [N]} \omega_j \left(\hat{\xi}_{j,s}(t)-z_{j,s}(t) \right)=\sum_{j \in [N]} \omega_j \left(\hat{\xi}_{j,s}(0)-z_{j,s}(0) \right) + \\
			&\sum_{s' \neq s}\sum_{j \in [N]}\omega_j \left[ \mathcal{N}_{j,ss'}\left(\mathcal{K}_{j,ss'}(t) \right) -\int_{0}^{t}q_{ss'} \left( \zeta_{j}(\tau)\right) \hat{\xi}_{j,s'}(\tau) \d \tau \right] \\
			&-\sum_{s' \neq s} \sum_{j \in [N]}\omega_j \left[ \mathcal{N}_{j,s's}\left(\mathcal{K}_{j,s's}(t) \right) -\int_{0}^{t}q_{s's} \left( \zeta_{i}(\tau)\right) \hat{\xi}_{j,s}(\tau) \d \tau \right]+ \\
			&\sum_{s'} \int_{0}^{t} \sum_{j \in [N]}\omega_j q_{ss'}(\zeta_j(\tau)) \left(\hat{\xi}_{j,s'}(\tau)-z_{j,s'}(\tau) \right) \d \tau
		\end{align*}
		
		Using the independence of $\hat{\xi}_{j,s}(t)$:
		\begin{align*}
			&\E \left[ \left| \sum_{j \in [N]} \omega_j \left(\hat{\xi}_{j,s}(0)-z_{j,s}(0) \right)\right| \right] \leq \mathbb{D} \left(\sum_{j \in [N]} \omega_j \hat{\xi}_{j,s}(0) \right)= \\
			&\sqrt{\sum_{j \in [N]}\omega_j^2\mathbb{D}^2 \left(\hat{\xi}_{j,s}(0) \right)} \leq \sqrt{\sum_{j \in [N]}\omega_j^2}.
		\end{align*}		
		
		For the remaining terms, we can use the same bound for all $s,s'$  pairs. Notice
		\begin{align*}
			\left|\mathcal{K}_{j,ss'}(t) \right|=\int_{0}^{t}q_{ss'} \left( \zeta_{j}(\tau)\right) \hat{\xi}_{j,s'}(\tau) \d \tau \leq q_{\textrm{max}}t
		\end{align*}
		and the fact that $\mathcal{N}_{j,ss'}\left(\mathcal{K}_{j,ss'}(t) \right) -\int_{0}^{t}q_{ss'} \left( \zeta_{j}(\tau)\right) \hat{\xi}_{j,s'}(\tau) \d \tau $ is a martingale. Using Doob's inequality,
		\begin{align*}
			&\E \left( \sup_{0 \leq \tau \leq t} \left| \sum_{j \in [N]}\omega_j \left[ \mathcal{N}_{j,ss'}\left(\mathcal{K}_{j,ss'}(\tau) \right) -\int_{0}^{\tau}q_{ss'} \left( \zeta_{j}(\tau')\right) \hat{\xi}_{j,s'}(\tau') \d \tau' \right] \right| \right) \leq \\
			&\left(\E\left( \sup_{0 \leq \tau \leq t} \left| \sum_{j \in [N]}\omega_j \left[ \mathcal{N}_{j,ss'}\left(\mathcal{K}_{j,ss'}(\tau) \right) -\int_{0}^{\tau}q_{ss'} \left( \zeta_{j}(\tau')\right) \hat{\xi}_{j,s'}(\tau') \d \tau' \right] \right|\right)^2   \right)^{1/2}\leq\\
			&2\left(\E\left( \left| \sum_{j \in [N]}\omega_j \left[ \mathcal{N}_{j,ss'}\left(\mathcal{K}_{j,ss'}(t) \right) -\int_{0}^{t}q_{ss'} \left( \zeta_{j}(\tau)\right) \hat{\xi}_{j,s'}(\tau) \d \tau \right] \right|\right)^2 \right)^{1/2}=\\
			& 2\left(\E \left\langle\sum_{j \in [N]}\omega_j \left[ \mathcal{N}_{j,ss'}\left(\mathcal{K}_{j,ss'}(\cdot) \right) -\int_{0}^{\cdot}q_{ss'} \left( \zeta_{j}(\tau)\right) \hat{\xi}_{j,s'}(\tau) \d \tau \right] \right\rangle_{\!\! t}\right)^{1/2}=\\
			& 2\left(\E\left\langle\sum_{j \in [N]}\omega_j \mathcal{N}_{j,ss'}\left(\mathcal{K}_{j,ss'}(\cdot) \right) \right\rangle_{\!\! t}\right)^{1/2}=
			2\left(\sum_{j \in [N]}\omega_j^2 \E\left\langle\mathcal{N}_{j,ss'}\left(\mathcal{K}_{j,ss'}(\cdot) \right) \right\rangle_{t}\right)^{1/2}\leq\\
			& 2\left(\sum_{j \in [N]}\omega_j^2 q_{\max}t\right)^{1/2}\leq
			2(1+ q_{\max}t)\sqrt{\sum_{j \in [N]}\omega_j^2},
		\end{align*}
		where $\langle\cdot\rangle_t$ denotes quadratic variation, and we used that the quadratic variation of the integral term is $0$, and the quadratic variation of a pure jump process is the total of the squared jumps (for the Poisson process, jump size is 1, and this reduces to the total number of jumps).
		
		As for the last term,
		\begin{align*}
			&\int_{0}^{t}\E \left[ \left| \sum_{j \in [N]} \omega_j q_{ss'}(\zeta_j(\tau)) \left(\hat{\xi}_{j,s'}(\tau)-z_{j,s'}(\tau) \right) \right| \right] \d \tau \leq \\
			&\int_{0}^{t} \mathbb{D} \left(\sum_{j \in [N]} \omega_j q_{ss'}(\zeta_j(\tau)) \hat{\xi}_{j,s'}(\tau)  \right)\d \tau =\\
			& \int_{0}^{t} \sqrt{\sum_{j \in [N]} \omega_{j}^2 q_{ss'}^2(\zeta_j(\tau)) \mathbb{D}^2 \left( \hat{\xi}_{j,s'}(\tau)\right)} \d \tau \leq q_{\textrm{max}}t \sqrt{\sum_{j \in [N]} \omega_j^2}.
		\end{align*}
	\end{proof}
	}

\remm{
	\begin{proof} (Theorem \ref{t:fslln})
	It is an immediate consequence of Lemma \ref{c:1} as we set the weights to  $\omega_i= \frac{1}{K}\1{i \leq K}.$ 
	\end{proof}
}		
	\remm{Next} we are showing an inequality for the error of the indicators.
	\begin{lemma}
		\label{l:D_0}
		There exists $\tilde{C}_1=\tilde{C}_1(\delta_{\max})$ such that
		\begin{align*}
			D_{\max}^{(0)}(t) \leq& \tilde{C}_1 \int_{0}^{t}D_{\max}(\tau) \d \tau, \\
			\tilde{D}_{i}^{(0)}(t) \leq & \tilde{C}_1 \int_{0}^{t} \tilde{D}_{i}(\tau) \d \tau . 
		\end{align*}
	\end{lemma}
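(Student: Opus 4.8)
\textbf{Proof proposal (Lemma \ref{l:D_0}).}
The plan is to exploit that $\xi$ in \eqref{eq:xi} and $\hat\xi$ in \eqref{eq:hat_xi} are driven by the \emph{same} Poisson processes $\N_{i,ss'}$, so their discrepancy is controlled by the points of $\N_{i,ss'}$ falling in the symmetric difference of the domains $\HH_{i,ss'}$ and $\K_{i,ss'}$. Subtracting the two Poisson representations and using $\bigl|\N_{i,ss'}(\HH)-\N_{i,ss'}(\K)\bigr|\le \N_{i,ss'}(\HH\triangle\K)$ termwise, I would first record the pathwise bound
\begin{align*}
\sum_{s\in\S}\bigl|\xi_{i,s}(\tau)-\hat\xi_{i,s}(\tau)\bigr|\le 2\sum_{s\ne s'}\N_{i,ss'}\bigl(\HH_{i,ss'}(\tau)\triangle\K_{i,ss'}(\tau)\bigr),\qquad \tau\ge 0,
\end{align*}
the sum running over ordered pairs $s\ne s'$ in $\S$. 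Thus the whole task reduces to estimating $\E\,\N_{i,ss'}\bigl(\HH_{i,ss'}(t)\triangle\K_{i,ss'}(t)\bigr)$.

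Next I would make the observation that unifies the two claims: the slice of $\HH_{i,ss'}(\cdot)\triangle\K_{i,ss'}(\cdot)$ at a fixed time coordinate $u$ is frozen once $\tau\ge u$ (it is the interval between the heights $q_{ss'}(\phi_i(u))\xi_{i,s'}(u)$ and $q_{ss'}(\zeta_i(u))\hat\xi_{i,s'}(u)$, which do not involve $\tau$), so $\tau\mapsto\HH_{i,ss'}(\tau)\triangle\K_{i,ss'}(\tau)$ is nondecreasing and $\sup_{\tau\le t}\N_{i,ss'}\bigl(\HH_{i,ss'}(\tau)\triangle\K_{i,ss'}(\tau)\bigr)=\N_{i,ss'}\bigl(\HH_{i,ss'}(t)\triangle\K_{i,ss'}(t)\bigr)$. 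Hence no Doob inequality is needed here and both claims follow from a single estimate. Writing $\HH\triangle\K=(\HH\cup\K)\setminus(\HH\cap\K)$ and applying to the union and the intersection domain the same compensator identity already used to derive \eqref{eq:preNIMFA} from \eqref{eq:xi} (that $\N_{i,ss'}(\mathcal{D}(t))-\int_0^t(\text{height of }\mathcal{D}\text{ at }\tau)\,\d\tau$ is a mean-zero martingale whenever $\mathcal{D}$ has an adapted height process), one obtains
\begin{align*}
\E\,\N_{i,ss'}\bigl(\HH_{i,ss'}(t)\triangle\K_{i,ss'}(t)\bigr)=\E\int_0^t\bigl|q_{ss'}(\phi_i(\tau))\xi_{i,s'}(\tau)-q_{ss'}(\zeta_i(\tau))\hat\xi_{i,s'}(\tau)\bigr|\,\d\tau.
\end{align*}

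I would then bound the integrand by adding and subtracting $q_{ss'}(\zeta_i(\tau))\xi_{i,s'}(\tau)$, giving at most $\bigl|q_{ss'}(\phi_i(\tau))-q_{ss'}(\zeta_i(\tau))\bigr|+q_{ss'}(\zeta_i(\tau))\bigl|\xi_{i,s'}(\tau)-\hat\xi_{i,s'}(\tau)\bigr|$. Since $\xi_j(\tau)$ and $z_j(\tau)$ are probability vectors, $\sum_{\underline{s}}\phi^{(m)}_{i,\underline{s}}(\tau)=\sum_{\underline{s}}\zeta^{(m)}_{i,\underline{s}}(\tau)=\delta^{(m)}(i)\le\delta_{\max}$ by \eqref{A:1}, so both $\phi_i(\tau)$ and $\zeta_i(\tau)$ stay in the fixed compact set $\bigl\{(\phi^{(m)})_{m=1}^M:\phi^{(m)}\ge 0,\ \sum_{\underline{s}}\phi^{(m)}_{\underline{s}}\le\delta_{\max}\bigr\}$; on this set the locally Lipschitz, nonnegative $q_{ss'}$ is Lipschitz with some constant $L=L(\delta_{\max})$ and bounded by some $\bar q=\bar q(\delta_{\max})$. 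Hence the integrand is at most $L\sum_{m=1}^M\sum_{\underline{s}}\bigl|\phi^{(m)}_{i,\underline{s}}(\tau)-\zeta^{(m)}_{i,\underline{s}}(\tau)\bigr|+\bar q\sum_{s''\in\S}\bigl|\xi_{i,s''}(\tau)-\hat\xi_{i,s''}(\tau)\bigr|$.

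Finally I would assemble. Taking expectations in the slice bound, for the first claim the result is at most $L\sum_{m=1}^M D_i^{(m)}(\tau)+\bar q\,D_i^{(0)}(\tau)\le\max(L,\bar q)\,D_{\max}(\tau)$ after passing to $\max_i$ and using $\sum_{m=0}^M D_{\max}^{(m)}=D_{\max}$; combining with the pathwise bound, integrating, and summing over the $|\S|(|\S|-1)$ ordered pairs gives $D_{\max}^{(0)}(t)\le\tilde C_1\int_0^t D_{\max}(\tau)\,\d\tau$ with $\tilde C_1:=2|\S|(|\S|-1)\max\bigl(L(\delta_{\max}),\bar q(\delta_{\max})\bigr)$. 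The second claim follows by the identical computation, using instead $\E\sum_{\underline{s}}|\phi^{(m)}_{i,\underline{s}}(\tau)-\zeta^{(m)}_{i,\underline{s}}(\tau)|\le\tilde D_i^{(m)}(\tau)$ and $\E\sum_{s''}|\xi_{i,s''}(\tau)-\hat\xi_{i,s''}(\tau)|\le\tilde D_i^{(0)}(\tau)$ together with the monotonicity of $\HH_{i,ss'}\triangle\K_{i,ss'}$ in $\tau$ noted above, yielding $\tilde D_i^{(0)}(t)\le\tilde C_1\int_0^t\tilde D_i(\tau)\,\d\tau$. The hard part will be the compensator identity for the Poisson integral over the random region $\HH_{i,ss'}\triangle\K_{i,ss'}$: its height process refers to $\xi$, which is itself a functional of the very same $\N_{i,ss'}$, so one must justify that $\xi$ is adapted and that the integrand may be taken predictable (e.g.\ replacing it by its left limits, which differ only on a Lebesgue-null set of times). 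This is the standard well-posedness of the Poisson SDE \eqref{eq:xi}, and it is the only point that needs care; the rest is Grönwall-ready bookkeeping.
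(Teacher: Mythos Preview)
Your proof is correct and follows essentially the same route as the paper: the pathwise bound via $\N_{i,ss'}$ on the symmetric difference $\HH_{i,ss'}\triangle\K_{i,ss'}$, the monotonicity of this set in $\tau$ (which collapses the $\sup$), the compensator identity giving $\E\int_0^t\bigl|q_{ss'}(\phi_i)\xi_{i,s'}-q_{ss'}(\zeta_i)\hat\xi_{i,s'}\bigr|\,\d\tau$, and the split via Lipschitz continuity and boundedness of $q_{ss'}$ on the compact set determined by $\delta_{\max}$. Your constant $2|\S|(|\S|-1)\max(L,\bar q)$ differs only cosmetically from the paper's $2|\S|^2(q_{\max}+L_q)$, and your explicit remark about predictability of the height process in the compensator step is a point the paper passes over silently.
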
 	
	
	\begin{proof} (Lemma \ref{l:D_0})
		$\oplus$ denotes symmetric difference.
		\begin{align*}
			& \left|\xi_{i,s}(\tau)-\hat{\xi}_{i,s}(\tau) \right| \leq \\ 
			&\sum_{\substack{s' \in \S \\ s' \neq s}} \left|\N_{i,ss'} \left(\HH_{i,ss'}(\tau) \right)-\N_{i,ss'} \left(\K_{i,ss'}(\tau) \right) \right|+\left|\N_{i,ss'} \left(\HH_{i,ss'}(\tau) \right)-\N_{i,ss'} \left(\K_{i,ss'}(\tau) \right) \right| \leq \\
			&\sum_{\substack{s' \in \S \\ s' \neq s}} \N_{i,ss'} \left(\HH_{i,ss'}(\tau) \oplus \K_{i,ss'}(\tau)  \right)+\N_{i,s's} \left(\HH_{i,s's}(\tau) \oplus \K_{i,s's}(\tau)  \right) \leq \\
			&\sum_{\substack{s' \in \S \\ s' \neq s}} \N_{i,ss'} \left(\HH_{i,ss'}(t) \oplus \K_{i,ss'}(t)  \right)+\N_{i,s's} \left(\HH_{i,s's}(t) \oplus \K_{i,s's}(t)  \right)
		\end{align*}
		In the last step we used the fact that $\HH_{i,ss'}(\tau) \oplus \K_{i,ss'}(\tau)$ is an increasing set in $\tau$. 
		
		Since the right hand side does not depend on $\tau$, it makes no difference whether we take $\sup_{0 \leq \tau \leq t}$ inside or outside of the expectation.
		\begin{align*}
			&D_{i}^{(0)}(t) \leq \tilde{D}_{i}^{(0)}(t) \leq \\
			& \sum_{s \in \S} \sum_{\substack{s' \in \S \\ s' \neq s}} \E \left[ \N_{i,ss'} \left(\HH_{i,ss'}(t) \oplus \K_{i,ss'}(t)  \right)+\N_{i,s's} \left(\HH_{i,s's}(t) \oplus \K_{i,s's}(t)  \right) \right]
		\end{align*}
		
		The summations with respect to $s$ and $s'$ only contribute a constant factor $\left | \S \right |^2$ which will be neglected. Also, the same bound applies for $E \left[ \N_{i,ss'} \left(\HH_{i,ss'}(t) \oplus \K_{i,ss'}(t)  \right) \right]$ and $E \left[ \N_{i,s's} \left(\HH_{i,s's}(t) \oplus \K_{i,s's}(t)  \right) \right]$, so it is enough to keep track of only the first one, with a factor of $2$.
		
		The rate functions are Lipschitz-continuous on a compact domain due to assumption \eqref{A:1}, so they are bounded; their maximum is denoted by $q_{\max}$.
		
		\begin{align*}
			&\E \left[\N_{i,ss'} \left(\HH_{i,ss'}(t) \oplus \K_{i,ss'}(t)  \right) \right]=\\
			&\E \left[\int_{0}^{t} \left|q_{ss'}\left(\phi_{i}(\tau) \right)\xi_{i,s'}(\tau)-q_{ss'}\left(\remm{\zeta_i(\tau)} \right)\hat{\xi}_{i,s'}(\tau) \right| \d \tau \right]  \leq \\
			& \E \left[ \int_{0}^{t} q_{\max} \left|\xi_{i,s'}(\tau)-\hat{\xi}_{i,s'}(\tau) \right|+L_{q}\sum_{m=1}^{M}\sum_{\underline{r} \in \S^m} \left|\phi_{i,\underline{r}}^{(m)}(\tau)-\remm{\zeta_{i,\underline{r}}^{(m)}(\tau)} \right| \d \tau \right] \leq \\
			& \left(q_{\max}+L_{q}\right)\int_{0}^{t} \sum_{m=0}^{M}D_{i}^{(m)}(\tau) \d \tau \leq \left(q_{\max}+L_{q}\right)\int_{0}^{t} \sum_{m=0}^{M}\tilde{D}_{i}^{(m)}(\tau) \d \tau
		\end{align*}
		
		Setting $\tilde{C}_1:=2\left(q_{\max}+L_q\right)\left| \S \right|^2$ yields
		\begin{align*}
			D_{i}^{(0)}(t) \leq \tilde{D}_{i}^{(0)}(t) \leq \tilde{C}_1 \int_{0}^{t} \sum_{m=0}^{M}D_{i}^{(m)}(\tau) \d \tau \leq \tilde{C}_1 \int_{0}^{t} \underbrace{\sum_{m=0}^{M}\tilde{D}_{i}^{(m)}(\tau)}_{=\tilde{D}_{i}(\tau)} \d \tau.
		\end{align*}  	
	\end{proof}
	
	The second half of the proof of Theorem \ref{t:Main} involves estimating the difference between the neighbors $\phi_{i}(t)$ and $\zeta_{i}(t)$ via the differences of the indicators.
	
	$\zeta_{i}(t)$ does not contain the indicators $\hat{\xi}_{i}(t)$ directly, only their expectation $z_{i}(t)$. To bridge this gap, we introduce ``intermediate neighborhoods''
	\begin{align*}
		\hat{\phi}_{i,\underline{s}}^{(m)}(t)=&\sum_{\underline{j} \in [N]^m}w_{i, \underline{j}}^{(m)} \hat{\xi}_{\underline{j},\underline{s}}^{(m)}(t).
	\end{align*}
	
	Note that under \eqref{NIMFA_initial_condition} and independent initial conditions,
	$$\E \left(\hat{\xi}_{\underline{i},\underline{s}}^{(m)} \right)=\E \left(\prod_{l=1}^{m} \hat{\xi}_{i_l,s_l}(t) \right)=\prod_{l=1}^{m}\E \left( \hat{\xi}_{i_l,s_l}(t) \right)=\prod_{l=1}^{m} z_{i_l,s_l}(t)=z_{\underline{i},\underline{s}}^{(m)}$$
	for non-secondary loop $\underline{i}$ indices. Assumption \eqref{A:2} was made to ensure secondary loops have low total weight. 
	\begin{align}
		\label{eq:diagonal_in_lemma}
		\begin{split}
			&\left|\E \left(\hat{\phi}_{i,\underline{s}}^{(m)}(t) \right)-\zeta_{i,\underline{s}}^{(m)}(t) \right|=\left|\sum_{\underline{j} \in [N]^{m}} w_{i,\underline{j}}^{(m)}\left[\E \left(\hat{\xi}_{\underline{j},\underline{s}}^{(m)}(t)\right)-z_{\underline{j},\underline{s}}^{(m)}(t) \right] \right|=\\
			& \left|\sum_{\substack{\underline{j} \in [N]^{m}\\ \underline{j} \textrm{ s. loop}}} w_{i,\underline{j}}^{(m)}\left[\E \left(\hat{\xi}_{\underline{j},\underline{s}}^{(m)}(t)\right)-z_{\underline{j},\underline{s}}^{(m)}(t) \right] \right| \leq \sum_{\substack{\underline{j} \in [N]^{m}\\ \underline{j} \textrm{ s. loop}}} w_{i,\underline{j}}^{(m)} \leq R \sqrt{\remm{w_{\max}^{*}}}.
		\end{split}
	\end{align}
	
	The next lemma shows that $\hat{\phi}_{i}(t)$ and $\zeta_{i}(t)$ are close.
	\begin{lemma}
		\label{l:intermediate}
		Assume \eqref{NIMFA_initial_condition} holds with independent initial conditions. Then there is a $\tilde{C}_2=\tilde{C}_2\left(\delta_{\max},R\right)$ such that for any $1 \leq m \leq M, \ i \in [N]$
		\begin{align}
			\label{eq:lemma3a}
			\sup_{0 \leq t}\E \left[ \sum_{\underline{s} \in \S^m} \left|\hat{\phi}_{i,\underline{s}}^{(m)}(t)-\zeta_{i,\underline{s}}^{(m)}(t) \right| \right] \leq \tilde{C}_2 \sqrt{\remm{w_{\max}^{*}}}.
		\end{align}
		
		If we further assume $M=1$, there exists a $\tilde{C}_3$  such that for all $t \geq 0$,
		\begin{align}
			\label{eq:lemma3b}
			\E \left[ \sup_{0 \leq t} \sum_{s \in \S} \left|\hat{\phi}_{i,s}(t)-\zeta_{i,s}(t) \right| \right] \leq \tilde{C}_3\remm{(1+t)} \underbrace{\sqrt{\sum_{j=1}^n w_{ij}^2}}_{=\mu_i}.
		\end{align}
	\end{lemma}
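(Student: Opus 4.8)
Write \textbf{Proof proposal (Lemma \ref{l:intermediate}).}

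The plan is to split $\hat{\phi}^{(m)}_{i,\ul s}(t)-\zeta^{(m)}_{i,\ul s}(t)$ into a deterministic ``bias'' term coming from secondary loops and a genuine random fluctuation of $\hat{\phi}^{(m)}_{i,\ul s}(t)$ around its mean. The bias is already controlled: by \eqref{eq:diagonal_in_lemma} we have $\bigl|\E\hat{\phi}^{(m)}_{i,\ul s}(t)-\zeta^{(m)}_{i,\ul s}(t)\bigr|\le R\sqrt{w_{\max}}$ uniformly in $t$, so summing over the $|\S|^m$ values of $\ul s$ contributes a term of the desired order. It then remains to bound $\E\bigl|\hat{\phi}^{(m)}_{i,\ul s}(t)-\E\hat{\phi}^{(m)}_{i,\ul s}(t)\bigr|$, and the structural fact driving everything is that, under \eqref{NIMFA_initial_condition} with independent initial data, the processes $\bigl(\hat{\xi}_j(\cdot)\bigr)_{j\in[N]}$ are mutually independent, so that $\hat{\phi}^{(m)}_{i,\ul s}(t)=\sum_{\ul j}w^{(m)}_{i,\ul j}\hat{\xi}^{(m)}_{\ul j,\ul s}(t)$ is a weighted sum of products of indicators which are, except when the index tuples overlap, independent of one another.

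For \eqref{eq:lemma3a} (general $m$, supremum outside the expectation) I would first apply Jensen's inequality, $\E\bigl|\hat{\phi}^{(m)}_{i,\ul s}(t)-\E\hat{\phi}^{(m)}_{i,\ul s}(t)\bigr|\le\sqrt{\mathrm{Var}\bigl(\hat{\phi}^{(m)}_{i,\ul s}(t)\bigr)}$, and expand the variance as $\sum_{\ul j,\ul j'}w^{(m)}_{i,\ul j}\,w^{(m)}_{i,\ul j'}\,\cov\bigl(\hat{\xi}^{(m)}_{\ul j,\ul s}(t),\hat{\xi}^{(m)}_{\ul j',\ul s}(t)\bigr)$. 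Since $\hat{\xi}^{(m)}_{\ul j,\ul s}(t)$ is a function of $\{\hat{\xi}_v(t):v\in\ul j\}$ only, the covariance vanishes whenever $\ul j$ and $\ul j'$ are vertex-disjoint and is bounded by $1$ otherwise; hence $\mathrm{Var}\bigl(\hat{\phi}^{(m)}_{i,\ul s}(t)\bigr)\le\sum_{\ul j\cap\ul j'\neq\emptyset}w^{(m)}_{i,\ul j}\,w^{(m)}_{i,\ul j'}$, a bound independent of $t$ so the supremum over $t$ comes for free. The diagonal contribution $\ul j=\ul j'$ is at once $\le w_{\max}\sum_{\ul j}w^{(m)}_{i,\ul j}=w_{\max}\,\delta^{(m)}(i)\le\delta_{\max}w_{\max}$; the off-diagonal overlapping part is organised by the shared vertex $v$ and the pair of coordinate positions it occupies in $\ul j$ and $\ul j'$, and is estimated in terms of $\delta_{\max}$, $R$ and $w_{\max}$ using \eqref{A:1} and \eqref{A:2}. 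Taking square roots and summing over $\ul s$ gives the bound $\tilde C_2\sqrt{w_{\max}}$. In the case $m=1$ this step is already transparent: $\mathrm{Var}\bigl(\hat{\phi}_{i,s}(t)\bigr)=\sum_j w_{ij}^2\,\mathrm{Var}\bigl(\hat{\xi}_{j,s}(t)\bigr)\le\sum_j w_{ij}^2=\mu_i^2$.

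For the stronger statement \eqref{eq:lemma3b} with $M=1$ I would use that there are no secondary loops, so that $\E\hat{\phi}_{i,s}(t)=\zeta_{i,s}(t)$ exactly, and that $\hat{\phi}_{i,s}(t)-\zeta_{i,s}(t)=\sum_j w_{ij}\bigl(\hat{\xi}_{j,s}(t)-z_{j,s}(t)\bigr)$ is a sum of independent martingales, hence itself a right-continuous martingale (recall from the text that each $\hat{\xi}_{j}(t)-z_j(t)$ is a martingale). Doob's $L^2$-maximal inequality then yields $\E\bigl[\sup_{0\le\tau\le t}\bigl(\hat{\phi}_{i,s}(\tau)-\zeta_{i,s}(\tau)\bigr)^2\bigr]\le 4\,\E\bigl[\bigl(\hat{\phi}_{i,s}(t)-\zeta_{i,s}(t)\bigr)^2\bigr]=4\sum_j w_{ij}^2\,\mathrm{Var}\bigl(\hat{\xi}_{j,s}(t)\bigr)\le 4\mu_i^2$, and a further application of Jensen's inequality followed by a sum over $s\in\S$ gives $\E\bigl[\sup_{0\le\tau\le t}\sum_{s\in\S}\bigl|\hat{\phi}_{i,s}(\tau)-\zeta_{i,s}(\tau)\bigr|\bigr]\le 2|\S|\,\mu_i$, i.e.\ \eqref{eq:lemma3b} with $\tilde C_3=2|\S|$. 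I expect the main obstacle to be precisely the combinatorial variance estimate for $m\ge2$: bounding the total weight $\sum_{\ul j\cap\ul j'\neq\emptyset}w^{(m)}_{i,\ul j}w^{(m)}_{i,\ul j'}$ of pairs of hyperedges overlapping in a vertex by a multiple of $w_{\max}$ is delicate, since a naive pairing argument only delivers an $O(1)$ bound, and one must exploit \eqref{A:1} together with the secondary-loop control \eqref{A:2} to gain the extra factor $\sqrt{w_{\max}}$.
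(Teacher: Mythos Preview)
Your approach is essentially the paper's. For \eqref{eq:lemma3b} it is identical: the paper also observes that $\hat{\phi}_{i,s}(t)-\zeta_{i,s}(t)$ is a martingale, applies Doob's $L^2$ maximal inequality, bounds the resulting variance by $\sum_j w_{ij}^2$ using independence, and arrives at $\tilde C_3=2|\S|$.

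For \eqref{eq:lemma3a} the overall strategy also matches --- split off the secondary-loop bias via \eqref{eq:diagonal_in_lemma}, then control $\E\bigl|\hat\phi^{(m)}_{i,\ul s}-\E\hat\phi^{(m)}_{i,\ul s}\bigr|$ by the standard deviation --- but with one notable difference. The paper does \emph{not} carry out the off-diagonal covariance estimate you anticipate. It simply writes
\[
\D^2\bigl(\hat\phi^{(m)}_{i,\ul s}(t)\bigr)=\sum_{\ul j\in[N]^m}\bigl(w^{(m)}_{i,\ul j}\bigr)^2\,\D^2\bigl(\hat\xi^{(m)}_{\ul j,\ul s}(t)\bigr)\le\sum_{\ul j}\bigl(w^{(m)}_{i,\ul j}\bigr)^2\le\delta_{\max}w_{\max},
\]
treating the summands $\hat\xi^{(m)}_{\ul j,\ul s}(t)$ as uncorrelated across $\ul j$ and keeping only the diagonal. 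The ``main obstacle'' you flag --- the cross terms $\cov\bigl(\hat\xi^{(m)}_{\ul j,\ul s},\hat\xi^{(m)}_{\ul j',\ul s}\bigr)$ for overlapping but distinct $\ul j,\ul j'$, which are generically nonnegative for $m\ge2$ --- is simply not addressed; those terms are silently dropped. So you are being more careful here than the paper itself: the combinatorial bound on $\sum_{\ul j\cap\ul j'\neq\emptyset}w^{(m)}_{i,\ul j}w^{(m)}_{i,\ul j'}$ that you sketch would be what is needed to make the $m\ge2$ variance step rigorous, and the paper does not supply it. Your instinct that \eqref{A:1} and \eqref{A:2} alone may not immediately yield the extra factor of $w_{\max}$ on this sum (a naive pairing via a shared vertex gives only $O(\delta_{\max}^2)$) is well founded.
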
 
	
	\begin{proof}(Lemma \ref{l:intermediate})
		
		We start by applying \eqref{eq:diagonal_in_lemma}.	
		\begin{align*}
			&\sup_{0 \leq t}\E \left[ \sum_{\underline{s} \in \S^m} \left|\hat{\phi}_{i,\underline{s}}^{(m)}(t)-\zeta_{i,\underline{s}}^{(m)}(t) \right| \right] \leq\\
			& R \left |\S \right|^M \sqrt{\remm{w_{\max}^{*}}}+\sup_{0 \leq t}\E \left[ \sum_{\underline{s} \in \S^m} \left|\hat{\phi}_{i,\underline{s}}^{(m)}(t)-\E \left(\hat{\phi}_{i,\underline{s}}^{(m)}(t) \right) \right| \right].
		\end{align*}
		
		The first term is of the desired form; we examine the second term.
		\begin{align*}
			&\E \left[ \sum_{\underline{s} \in \S^m} \left|\hat{\phi}_{i,\underline{s}}^{(m)}(t)-\E \left(\hat{\phi}_{i,\underline{s}}^{(m)}(t) \right) \right| \right]=\sum_{\underline{s} \in \S^m}\E \left( \left|\hat{\phi}_{i,\underline{s}}^{(m)}(t)-\E \left(\hat{\phi}_{i,\underline{s}}^{(m)}(t) \right) \right| \right) \leq \\
			&\sum_{\underline{s} \in \S^m} \sqrt{\mathbb{D}^2 \left( \hat{\phi}_{i,\underline{s}}^{(m)}(t) \right)}=\remm{\sum_{\underline{s} \in \S^m} \sqrt{\sum_{\underline{j}, \ul{k} \in [N]^m} w_{i, \underline{j}}^{(m)}w_{i, \underline{k}}^{(m)} \cov \left( \hat{\xi}_{\underline{j},\underline{s}}^{(m)}(t),\hat{\xi}_{\underline{k},\underline{s}}^{(m)}(t) \right)} \leq} \\
			& \remm{\left |\S \right|^M \sqrt{\sum_{\substack{\underline{j}, \ul{k} \in [N]^m \\ \ul{j} \cap \ul{k} \neq \emptyset}} w_{i, \underline{j}}^{(m)}w_{i, \underline{k}}^{(m)}  } \leq \left |\S \right|^M \sqrt{w_{\max}^{*}}}. 
		\end{align*}
		The bound is uniform in $t$, so it can be upgraded to $\sup_{0\leq t}$ for free, and \eqref{eq:lemma3a} holds with $\tilde{C}_2=\remm{\left(R+1\right)}\left|\S \right|^M.$
		
		\remm{ \eqref{eq:lemma3b} is a consequence of Lemma \ref{c:1} by setting the weights to $\omega_{j}=w_{ij}$.}
	\end{proof}	
	
	Next we show an upper bound for the differences of neighborhood vectors, which are captured by the values $D^{(m)}_{\max}(t)$.
	\begin{lemma}
		\label{l:environment}
		Assume \eqref{NIMFA_initial_condition} and independent initial conditions. Then there exist constants $ \tilde{C}_{4}=\tilde{C}_{5}\left(\delta_{\max} \right)$ such that for any $t \geq 0$ and $1 \leq m \leq M$
		\begin{align*}
			D^{(m)}_{\max}(t) \leq \tilde{C}_{2}\sqrt{\remm{w_{\max}^{*}}}+\tilde{C}_4 D_{\max}^{(0)}(t).
		\end{align*}
		where $\tilde{C}_2$ comes from Lemma \ref{l:intermediate}.
		
		If we further assume $M=1$ then
		\begin{align*}
			\tilde{D}^{(1)}(t) \leq \tilde{C}_3\remm{(1+t)} \mu+W \tilde{D}^{(0)}(t).
		\end{align*}
		where $\tilde{C}_3$ comes from Lemma \ref{l:intermediate}.
	\end{lemma}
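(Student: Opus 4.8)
The plan is to split the error via the intermediate neighborhoods $\hat{\phi}_{i,\ul s}^{(m)}(t)=\sum_{\ul j}w_{i,\ul j}^{(m)}\hat{\xi}_{\ul j,\ul s}^{(m)}(t)$ introduced just before Lemma~\ref{l:intermediate}. Using $|\phi_{i,\ul s}^{(m)}-\zeta_{i,\ul s}^{(m)}|\le|\phi_{i,\ul s}^{(m)}-\hat{\phi}_{i,\ul s}^{(m)}|+|\hat{\phi}_{i,\ul s}^{(m)}-\zeta_{i,\ul s}^{(m)}|$ and summing over $\ul s\in\S^m$, the second term is controlled directly by Lemma~\ref{l:intermediate}: its expectation with the supremum inside is at most $\tilde C_2\sqrt{w_{\max}}$ by~\eqref{eq:lemma3a}, and when $M=1$ the expectation of its supremum over $[0,t]$ is at most $\tilde C_3\mu_i$ by~\eqref{eq:lemma3b}. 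So the only thing left is the first term, which involves the indicator errors $\xi-\hat{\xi}$ alone.

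For that first term I would write $\phi_{i,\ul s}^{(m)}(\tau)-\hat{\phi}_{i,\ul s}^{(m)}(\tau)=\sum_{\ul j}w_{i,\ul j}^{(m)}\bigl(\xi_{\ul j,\ul s}^{(m)}(\tau)-\hat{\xi}_{\ul j,\ul s}^{(m)}(\tau)\bigr)$, apply the triangle inequality, and then invoke Lemma~\ref{l:technical} on the products $\xi_{\ul j,\ul s}^{(m)}=\prod_{l=1}^m\xi_{j_l,s_l}$, which gives $|\xi_{\ul j,\ul s}^{(m)}(\tau)-\hat{\xi}_{\ul j,\ul s}^{(m)}(\tau)|\le\sum_{l=1}^m|\xi_{j_l,s_l}(\tau)-\hat{\xi}_{j_l,s_l}(\tau)|$. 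Summing over $\ul s\in\S^m$: for a fixed $l$, summing over the coordinates $s_{l'}$ with $l'\neq l$ contributes a factor $|\S|^{m-1}$, while summing over $s_l$ produces $\sum_{s\in\S}|\xi_{j_l,s}(\tau)-\hat{\xi}_{j_l,s}(\tau)|$, precisely the quantity whose expectation enters $D^{(0)}_{j_l}$. Combining this with $\sum_{\ul j}w_{i,\ul j}^{(m)}=\delta^{(m)}(i)\le\delta_{\max}$ from~\eqref{A:1} and bounding each $D^{(0)}_{j_l}\le D^{(0)}_{\max}$ yields, after taking $\sup_{\tau\le t}$, then the expectation, then $\max_i$, the first inequality with a constant $\tilde C_4$ of the form $M|\S|^{M-1}\delta_{\max}$ (finite, since $M$ and $|\S|$ are fixed and $m\le M$).

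The $M=1$ vector inequality is the same computation with the supremum on the outside. The key point is that the bound just obtained for $\sum_{s}|\phi_{i,s}(\tau)-\hat{\phi}_{i,s}(\tau)|$, namely $\sum_{j}w_{ij}\sup_{\tau'\le t}\sum_{s}|\xi_{j,s}(\tau')-\hat{\xi}_{j,s}(\tau')|$, no longer depends on $\tau$, so $\sup_{\tau\le t}$ passes through for free (unlike the $\hat{\phi}-\zeta$ term, which genuinely requires the martingale/Doob argument packaged in Lemma~\ref{l:intermediate}); taking expectations then gives $\sum_j w_{ij}\tilde D^{(0)}_j(t)=(W\tilde D^{(0)}(t))_i$, and adding the $\tilde C_3\mu_i$ contribution from~\eqref{eq:lemma3b} produces the claimed componentwise bound $\tilde D^{(1)}(t)\le\tilde C_3\mu+W\tilde D^{(0)}(t)$.

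I expect no real obstacle here: all of the probabilistic content (the variance estimate and Doob's inequality) is already absorbed into Lemma~\ref{l:intermediate}, and what remains is essentially bookkeeping. The one step that needs genuine care is the combinatorial reduction from the $\S^m$-indexed neighborhood discrepancy to the per-vertex indicator error, in particular keeping correct track of the $|\S|^{m-1}$ multiplicity coming from the ``free'' coordinates of $\ul s$; everything else is the triangle inequality together with the already-established facts $\delta^{(m)}(i)\le\delta_{\max}$ and Lemma~\ref{l:technical}.
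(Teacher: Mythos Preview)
Your proposal is correct and follows essentially the same route as the paper: split via the intermediate neighborhood $\hat{\phi}$, invoke Lemma~\ref{l:intermediate} for the $\hat{\phi}-\zeta$ piece, and handle the $\phi-\hat{\phi}$ piece by the triangle inequality, Lemma~\ref{l:technical}, and the degree bound~\eqref{A:1}. The only cosmetic difference is that you track the combinatorial factor as $|\S|^{m-1}$ (which is sharp) whereas the paper uses the cruder $|\S|^M$; this affects only the value of the constant $\tilde C_4$ and is immaterial.
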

	
	\begin{proof}(Lemma \ref{l:environment})
		
		Using Lemma \ref{l:intermediate}, we have
		\begin{align*}
			&D_{i}^{(m)}(t)=\sup_{0 \leq \tau \leq t}\E \left[ \sum_{\underline{s} \in \S^m} \left|\phi_{i, \underline{s}}^{(m)}(\tau)-\zeta_{i,\underline{s}}^{(m)}(\tau) \right| \right] \leq\\
			& \tilde{C}_2 \sqrt{w_{\max}}+\sup_{0 \leq \tau \leq t}\E \left[ \sum_{\underline{s} \in \S^m} \left|\phi_{i, \underline{s}}^{(m)}(\tau)-\hat{\phi}_{i,\underline{s}}^{(m)}(\tau) \right| \right] \leq \\
			&\tilde{C}_2 \sqrt{w_{\max}}+\sum_{\underline{j} \in [N]^m}w_{i,\underline{j}}^{(m)}\left(\sup_{0 \leq \tau \leq t}\E \left[ \sum_{\underline{s} \in \S^m} \left|\xi_{\underline{j}, \underline{s}}^{(m)}(\tau)-\hat{\xi}_{\underline{j},\underline{s}}^{(m)}(\tau) \right| \right] \right).
		\end{align*}
		
		Lemma \ref{l:technical} provides
		\begin{align*}
			&\left|\xi_{\underline{j}, \underline{s}}^{(m)}(\tau)-\hat{\xi}_{\underline{j},\underline{s}}^{(m)}(\tau) \right| \leq \sum_{l=1}^{m} \left|\xi_{j_l,s_l}(\tau)-\hat{\xi}_{j_l,s_l}(\tau) \right| \\
			&\sup_{0 \leq \tau \leq t}\E \left[ \sum_{\underline{s} \in \S^m} \left|\xi_{\underline{j}, \underline{s}}^{(m)}(\tau)-\hat{\xi}_{\underline{j},\underline{s}}^{(m)}(\tau) \right| \right] \leq \sup_{0 \leq \tau \leq t}\E \left[ \sum_{\underline{s} \in \S^m} \sum_{l=1}^{m} \left|\xi_{j_l,s_l}(\tau)-\hat{\xi}_{j_l,s_l}(\tau) \right| \right] \leq\\
			& \left|S \right|^M \sum_{l=1}^{m} \sup_{0 \leq \tau \leq t } \E \left[\sum_{r \in \S} \left|\xi_{j_l,r}(\tau)-\hat{\xi}_{j_l,r}(\tau) \right| \right] \leq \left|S \right|^M \sum_{l=1}^{m} D_{j_l}^{(0)}(t) \leq M \left|\S \right|^M D_{\max}^{(0)}(t).
		\end{align*}
		
		Putting the inequalities together yields
		\begin{align*}
			D_{i}^{(m)}(t) \leq& \tilde{C}_2 \sqrt{w_{\max}}+M \left|\S \right|^M D_{\max}^{(0)}(t) \underbrace{\sum_{\underline{j} \in [N]^m}w_{i,\underline{j}}^{(m)}}_{=\delta^{(m)}(i) } \\
			D_{\max}^{(m)}(t) \leq& \tilde{C}_2 \sqrt{w_{\max}}+\underbrace{M \left|\S \right|^M \delta_{\max}}_{=:\tilde{C}_4} D_{\max}^{(0)}(t).
		\end{align*}
		
		For the second part of Lemma \ref{l:environment}, we once again use Lemma \ref{l:intermediate}.
		\begin{align*}
			&\tilde{D}_{i}^{(1)}(t)= \E \left[\sup_{0 \leq \tau \leq t} \sum_{s \in \S}  \left| \phi_{i,s}(\tau)-\zeta_{i,s}(\tau) \right| \right] \leq \\
			& \tilde{C}_3\remm{(1+t)}\mu_{i}+ \E \left[\sup_{0 \leq \tau \leq t} \sum_{s \in \S}  \left| \phi_{i,s}(\tau)-\hat{\phi}_{i,s}(\tau) \right| \right] \leq\\
			& \tilde{C}_3\remm{(1+t)}\mu_{i}+\sum_{j=1}^{N} w_{ij} \left( \E \left[\sup_{0 \leq \tau \leq t} \sum_{s \in \S}  \left| \xi_{j,s}(\tau)-\hat{\xi}_{j,s}(\tau) \right| \right] \right)=\tilde{C}_3\remm{(1+t)} \mu_i+\sum_{j=1}^N w_{ij}\tilde{D}_{j}^{(0)}(t),
		\end{align*}
		so
		\begin{align*}
			\tilde{D}^{(1)}(t) \leq \tilde{C}_3\remm{(1+t)} \mu+W \tilde{D}^{(0)}(t).
		\end{align*}
	\end{proof}	
	
	With all the preparations done, we finally turn to proving Theorem \ref{t:Main}.
	\begin{proof}(Theorem \ref{t:Main})
		
		Using Lemma \ref{l:D_0} and \ref{l:environment} and Grönwall's inequality yields
		\begin{align*}
			&D_{\max}(t)=D_{\max}^{0}(t)+\sum_{m=1}^{M} D_{\max}^{(m)}(t) \leq\\
			& M\tilde{C}_2 \sqrt{\remm{w_{\max}^{*}}}+\left(M \tilde{C}_4+1 \right)D_{\max}^{(0)}(t) \leq\\
			& M\tilde{C}_2 \sqrt{\remm{w_{\max}^{*}}}+\left(M \tilde{C}_4+1 \right)\int_{0}^{t}D_{\max}(\tau) \d \tau,
		\end{align*}
		so
		\begin{align*}
			D_{\max}(t) \leq  \underbrace{M \tilde{C}_2e^{\left(M \tilde{C}_4+1 \right)t}}_{=:C} \sqrt{\remm{w_{\max}^{*}}}. 
		\end{align*}
		
		Proving the second part is similar.
		\begin{align*}
			\tilde{D}(t)=&\tilde{D}^{(0)}(t)+\sum_{m=1}^{M}\tilde{D}^{(m)}(t) \leq \underbrace{M \tilde{C}_3}_{=:C_1}\remm{(1+t)}\mu +M \left(W+I\right)\tilde{D}^{(0)}(t) \leq \\
			&C_1 \remm{(1+t)} \mu+ \underbrace{\tilde{C_1}M}_{=:C_2} \int_{0}^{t}\left(W+I\right)\tilde{D}(\tau) \d \tau \Rightarrow \\
			\left \|\tilde{D} (t)\right \| \leq & C_1 \left \| \mu \right \|+C_2 \left\|W+I \right\|\int_{0}^{t} \left \|\tilde{D}(\tau) \right \| \d \tau,
		\end{align*}
		so
		\begin{align*}
			\left \|\tilde{D} (t)\right \| \leq & C_1\remm{(1+t)}e^{C_2 \left\|W+I \right\| t} \left \| \mu \right \|.
		\end{align*}
	\end{proof}
	
	\subsection{Proof of Theorem  \ref{t:xi_z}}

	\begin{proof}(Theorem \ref{t:xi_z})
		For \eqref{eq:thm4a}, we consider $0\leq \tau\leq t$ and use both Theorems \ref{t:Main} and \ref{t:fslln}:
		\begin{align*}
			&\E \left(\sum_{s \in \S} \left|\frac1N\sum_{i=1}^N\left(\xi_{i,s}(\tau)-z_{i,s}(\tau)\right) \right| \right)\leq\\
			&\quad\sum_{s \in \S}\E \left(\left|\frac1N\sum_{i=1}^N\left(\xi_{i,s}(\tau)-\hat\xi_{i,s}(\tau)\right) \right| +\left|\frac1N\sum_{i=1}^N\left(\hat\xi_{i,s}(\tau)-z_{i,s}(\tau)\right) \right|\right)\leq\\
			&\quad\frac1N\sum_{i=1}^N\underbrace{\sum_{s \in \S}\E \left|\left(\xi_{i,s}(\tau)-\hat\xi_{i,s}(\tau)\right) \right|}_{\leq D_{\max}(t)} +\sum_{s \in \S}
			\underbrace{\E \left|\frac1N\sum_{i=1}^N\left(\hat\xi_{i,s}(\tau)-z_{i,s}(\tau)\right) \right|}_{\leq 2/\sqrt{N}}\leq\\
			&\quad D_{\max}(t)+\frac{2|\S|}{\sqrt{N}}\leq C\left(\sqrt{w_{\max}}+\frac{1}{\sqrt{N}}\right).
		\end{align*}
		
		The derivation of \eqref{eq:thm4b} is analogous to \eqref{eq:thm4a} with the exception of keeping the $\sup_{0 \leq \tau \leq t}$ inside the expectation and using \eqref{eq:main2} instead of \eqref{eq:main1}.
		
		For \eqref{eq:thm4c}, we just note that
		\begin{align*}
			\E\left[\sup_{0 \leq \tau \leq t}  \left(\sum_{s \in \S} \left|\frac1N\sum_{i=1}^N\left(\xi_{i,s}(\tau)-\hat\xi_{i,s}(\tau)\right) \right| \right)\right] &\leq
			\frac1N\|\tilde{D}(t)\|_1\\
			&\leq \frac1{\sqrt{N}}\| \tilde{D}(t)\|_2=O\left(\sqrt{\frac{1}{N}\|\mu \|_2^2} \right),
		\end{align*}
		and the rest of the argument is essentially identical to the previous one.
		
	\end{proof}
	
	\subsection{Proof of Proposition \ref{p:activity}}
	
	Let $p_{k^m}^{(m)}:=\frac{N_{k^{(m)}}}{N}$ denote the ratio of vertices in the local group $k^m.$
	
	\begin{align}
		\nonumber
		\bar{\zeta}_{k}^{(m)}(t)&=\sum_{\underline{l}^{(m)}} \bar{w}_{k^{(m)},\underline{j}^{(m)}}^{(m)}\prod_{r=1}^{m}\E \left(\left. z_{\iota}(t) \right| k^{(m)}(\iota)=l_r^{(m)} \right)\\
		\nonumber
		&= \sum_{\underline{l}^{(m)}}\left( \prod_{r=1}^m p_{l_r}^{(m)}\right)\left(a_{k^m}^{(m)}+\sum_{r=1}^{m}a_{l_r^{(m)}}^{(m)} \right)\prod_{r=1}^{m}\E \left(\left. z_{\iota}(t) \right| k^{(m)}(\iota)=l_r^{(m)} \right)\\
		\label{eq:activity_calc}
		&= \sum_{\underline{l}^{(m)}}\left(a_{k^m}^{(m)}+\sum_{r=1}^{m}a_{l_r^{(m)}}^{(m)} \right)\prod_{r=1}^{m}p_{l_r}^{(m)}\E \left(\left. z_{\iota}(t) \right| k^{(m)}(\iota)=l_r^{(m)} \right)
	\end{align}
	
	Observe
	\begin{align*}
		&\sum_{l^{m}=1}^{K^{(m)}}p_{l_r}^{(m)}\E \left(\left. z_{\iota}(t) \right| a^{(m)}(\iota)=l_r^{(m)} \right)=\\
		&\E \left(\E \left(\left. z_{\iota}(t) \right| a^{(m)}(\iota)=l_r^{(m)} \right) \right)=\E \left( z_{\iota}(t)  \right).
	\end{align*}
	Also introduce
	\begin{align*}
		\psi^{(m)}(t):= \sum_{l=1}^{K^{(m)}}a_{l}^{(m)}p_{l}^{(m)}\E \left(\left. z_{\iota}(t) \right| a^{(m)}(\iota)=l^{(m)} \right)
	\end{align*}
	which is renaissance of an activity biased average.
	
	We expand \eqref{eq:activity_calc} based on the terms $a_{k^m}^{(m)}+\sum_{r=1}^{m}a_{l_r^{(m)}}^{(m)}$. For $a_{k^m}^{(m)}$
	\begin{align*}
		&a_{k^m}^{(m)}\sum_{\underline{l}^{(m)}}\prod_{r=1}^{m}p_{l_r}^{(m)}\E \left(\left. z_{\iota}(t) \right| k^{(m)}(\iota)=l_r^{(m)} \right)=\\
		& a_{k^m}^{(m)} \left(\sum_{l=1}^{K^(m)}p_{l}^{(m)}\E \left(\left. z_{\iota}(t) \right| k^{(m)}(\iota)=l^{(m)} \right) \right)^m=\\
		&a_{k^m}^{(m)}\E^{m} \left(z_{\iota}(t)\right) .
	\end{align*}
	For the $a_{l_r'}^{(m)}$ terms we have
	\begin{align*}
		&\sum_{\underline{l}^{(m)}}a_{l_{r'}^{(m)}}\prod_{r=1}^{m}p_{l_r}^{(m)}\E \left(\left. z_{\iota}(t) \right| k^{(m)}(\iota)=l_r^{(m)} \right)=\\
		&\underbrace{\sum_{l_{r'}=1}^{K^{(m)}}a_{l_{r'}^{(m)}}p_{l_{r'}}^{(m)}\E \left(\left. z_{\iota}(t) \right| k^{(m)}(\iota)=l_{r'}^{(m)} \right)}_{\psi^{(m)}(t)}\sum_{\substack{l_r^{(m)}=1 \\  r \neq r'}}^{K^{(m)}}\prod_{\substack{r =1 \\ r \neq r'}}^{m}p_{l_r}^{(m)}\E \left(\left. z_{\iota}(t) \right| k^{(m)}(\iota)=l_r^{(m)} \right)=\\
		& \psi^{(m)}(t) \left( \sum_{l=1}^{K^(m)}p_{l}^{(m)}\E \left(\left. z_{\iota}(t) \right| k^{(m)}(\iota)=l^{(m)} \right) \right)^{m-1}=\psi^{(m)}(t) \E^{m-1} \left(z_{\iota}(t) \right).
	\end{align*}
	
	Therefore, \eqref{eq:activity_calc} reduces to
	\begin{align*}
		\bar{\zeta}^{(m)}_k(t)&=a_{k^m}^{(m)}\E^{m} \left(z_{\iota}(t)\right)+\psi^{(m)}(t) \E^{m-1} \left(z_{\iota}(t) \right)\\
		&=\left(a_{k^m}^{(m)}\E \left(z_{\iota}(t)\right)+\psi^{(m)}(t) \right)\E^{m-1} \left(z_{\iota}(t)\right).
	\end{align*}	
	
	\subsection{Proof of Theorem \ref{t:Szemeredi}}
	
	Recall \eqref{eq:rho}. We call the sets $X,Y \subset [N]$ $\varepsilon$-regular if for all $A \subseteq X, \ B \subseteq Y$ such that $\left|A \right|>\varepsilon \left|X \right|, \ \left| B\right|>\varepsilon \left|Y \right| $ one has
	\begin{align*}
		\left|\rho\left(A,B \right)-\rho \left( X,Y\right) \right|<\varepsilon.
	\end{align*} 
	
	We use Szemer\'edi's regularity lemma.
	\begin{lemma*}(Szemer\'edi's regularity lemma)
		
		For every $\varepsilon>0, \ K_{\min} \in \mathbb{Z}^{+}$ there is a $K_{\max}$ such that if $N\geq K_{\max}$ there is a partition $V_{0}, V_{1}, \dots, V_{K}$ such that
		\begin{align*}
			&\left|V_0 \right| <\varepsilon N,\\
			& \left|V_1 \right|= \dots =\left| V_K \right|, \\
			& K_{\min} \leq K \leq K_{\max}
		\end{align*}
		and there are at most $\varepsilon {K \choose 2 }$ pairs of $\left(V_{k},V_{l}\right), \ 1 \leq k <l \leq K$ such that they are not $\varepsilon$-regular.
	\end{lemma*}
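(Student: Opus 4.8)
The plan is to prove the regularity lemma by the classical \emph{energy increment} argument. For a partition $\mathcal{P}=\{W_1,\dots,W_k\}$ of $[N]$, define its index (mean-square density)
$$
\mathrm{ind}(\mathcal{P}):=\sum_{a,b=1}^{k}\frac{|W_a|\,|W_b|}{N^2}\,\rho(W_a,W_b)^2 ,
$$
which lies in $[0,1]$ because each $\rho\in[0,1]$ and the weights $\frac{|W_a||W_b|}{N^2}$ sum to $1$. The whole proof rests on two facts about $\mathrm{ind}$: it never decreases under refinement, and an abundance of irregular pairs forces a refinement that increases it by a quantity depending only on $\varepsilon$. Since $\mathrm{ind}$ is bounded, such increments cannot continue indefinitely, so $\varepsilon$-regularity must be reached after boundedly many steps.

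First I would record the monotonicity and defect inequalities. Writing $\rho(W_a,W_b)$ as the weighted average of the densities of the cells refining $W_a\times W_b$ and applying Jensen's inequality to $x\mapsto x^2$ yields $\mathrm{ind}(\mathcal{Q})\ge\mathrm{ind}(\mathcal{P})$ whenever $\mathcal{Q}$ refines $\mathcal{P}$. The quantitative (defect) version is the heart of the matter: if a pair $(A,B)$ fails $\varepsilon$-regularity, witnessed by $A'\subseteq A$, $B'\subseteq B$ with $|A'|>\varepsilon|A|$, $|B'|>\varepsilon|B|$ and $|\rho(A',B')-\rho(A,B)|>\varepsilon$, then cutting $A$ into $\{A',A\setminus A'\}$ and $B$ into $\{B',B\setminus B'\}$ raises the contribution of this pair to the index by at least $\varepsilon^4\,\frac{|A|\,|B|}{N^2}$. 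This is a short consequence of the defect form of Cauchy--Schwarz applied to the conditional densities.

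Next I would assemble the global increment and iterate. Starting from an equipartition $\mathcal{P}$ into $k$ parts that is \emph{not} $\varepsilon$-regular, so with more than $\varepsilon\binom{k}{2}$ irregular pairs, I perform the two-way cut above for a witnessing subset of every irregular pair and pass to the common refinement; this splits each part into at most $2^{k}$ cells, and summing the per-pair gains over all irregular pairs gives $\mathrm{ind}(\mathcal{Q})\ge\mathrm{ind}(\mathcal{P})+\varepsilon^5$. Beginning with any partition into $K_{\min}$ equal parts and repeating, the index rises by $\varepsilon^5$ each round while staying $\le 1$, so after at most $\varepsilon^{-5}$ rounds the partition must already be $\varepsilon$-regular. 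The number of parts is multiplied by at most $2^{k}$ per round, so it stays below a tower of twos of height $O(\varepsilon^{-5})$, which defines $K_{\max}$; this tower-type growth is genuine and cannot be improved.

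Two routine adjustments finish the argument, together with one real obstacle. To keep the parts exactly equal I re-equalize after each refinement by slicing every cell into blocks of a common size $\lfloor N/L\rfloor$ and sweeping the fewer than $L$ leftover vertices per cell into the exceptional set $V_0$; choosing $L$ large compared with the number of cells keeps $|V_0|<\varepsilon N$ throughout, and the lower bound $K\ge K_{\min}$ is arranged by starting from a $K_{\min}$-part equipartition, since a refinement never destroys regularity. The genuine obstacle is the defect inequality and its aggregation into the uniform $+\varepsilon^5$ increment, because this is where the quantitative Cauchy--Schwarz bookkeeping and the correct power of $\varepsilon$ must be extracted; by comparison the refinement count and the $V_0$ and equalization accounting are mechanical.
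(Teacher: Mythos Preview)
The paper does not prove this lemma at all: it is quoted verbatim from the literature (with citation \cite{Szemeredi}) and used as a black box in the proof of Theorem~\ref{t:Szemeredi}. So there is no ``paper's own proof'' to compare your attempt against.

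That said, your sketch is the standard energy-increment argument and is correct in outline. One small slip: the clause ``since a refinement never destroys regularity'' is false as stated --- refining a partition can turn an $\varepsilon$-regular pair into several pairs some of which are not $\varepsilon$-regular. Fortunately you do not need that claim; the bound $K\ge K_{\min}$ follows simply because the iteration only ever refines, so the number of non-exceptional parts is nondecreasing. The rest (index monotonicity via Jensen, the $\varepsilon^4$ local defect gain, the $\varepsilon^5$ global increment, the tower-type bound on $K_{\max}$, and the re-equalization into $V_0$) is the textbook route and is fine at the level of detail you give.
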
	
	
	Fix a $\varepsilon'>0$ and a $K_{\min}$ such that
	\begin{align*}
		K_{\min}>\frac{1}{\varepsilon'}.
	\end{align*}
	This choice ensures that there are enough boxes such that most of the vertices are between boxes and not within them. This is a fairly common approach in the context of Szemer\'edi's regularity lemma \cite{Szemeredi}.
	
	Using Szemer\'edi's regularity lemma for $\varepsilon'$, we obtain a partition denoted by $V_0,V_1, \dots, V_K.$
	
	For $p$ and $\kappa$, as defined in \eqref{eq:p} and \eqref{eq:kappa}, we have the following inequalities:
	\begin{align*}
		p=\frac{\bar{d}}{N} \geq \frac{p_0(N-1)}{N} \geq \frac{p_0}{2}>0\\
		1= \sum_{k=0}^{K}\frac{\left|V_k \right|}{N} \geq \sum_{k=1}^{K}\frac{\left|V_k \right|}{N}=K \kappa \Longrightarrow\kappa \leq  \frac{1}{K} \leq \frac{1}{K_{\min}}<\varepsilon'
	\end{align*}
	where we used $N\geq 2$.
	
	Introduce the notations
	\begin{align*}
		\bar{z}_{k}(t):=&\frac{1}{\left|V_k \right|}\sum_{i \in V_k} z_{i}(t),\\
		\psi(t):=&\sum_{k=1}^{K} \frac{\left|V_k \right|}{N}\left \|\bar{z}_{k}(t)-v_k(t) \right \|_1=\kappa \sum_{k=1}^{K} \left \|\bar{z}_{k}(t)-v_k(t) \right \|_1.
	\end{align*}
	If $V_0=\emptyset,$ we use the convention $z_{0}(t) \equiv 0.$
	
	From \eqref{eq:z_bar} and \eqref{eq:v_bar}, we have
	\begin{align*}
		&\bar{z}(t)=\frac{1}{N}\sum_{i=1}^{N}z_{i}(t)=\sum_{k=0}^{K} \frac{\left|V_k \right|}{N}\frac{1}{\left| V_k \right|}\sum_{i \in V_k}z_{i}(t)=\sum_{k=0}^{K} \frac{\left|V_k \right|}{N} \bar{z}_k(t) \\	
		&\left\|\bar{z}(t)-\bar{v}(t) \right \|_1= \left \| \frac{\left|V_0 \right|}{N} \bar{z}_{0}(t)+\sum_{k=1}^{K} \frac{\left|V_k \right|}{N} \left[\bar{z}_k(t)-v_k(t) \right] \right \|_1 \leq \\
		& \frac{\left|V_0 \right|}{N} \left \|\bar{z}_{0}(t) \right \|_{1}+\sum_{k=1}^{K} \frac{\left|V_k \right|}{N} \left \|\bar{z}_k(t)-v_k(t) \right \|_1 \leq \varepsilon'+\psi(t)
	\end{align*}
	where in the last step we used $\left|V_0 \right|<\varepsilon'N$ and
	\begin{align*}
		&  \left \|\bar{z}_{0}(t) \right \|_{1} \leq \frac{1}{\left|V_0 \right |}\sum_{i \in V_0}\underbrace{ \left \|z_{i}(t) \right \|_1}_{=1}=1.	
	\end{align*}
	Going forward, it is enough to examine $\psi(t).$
	
	Next we calculate the derivative of $\bar{z}_{k}(t).$ As $M=1,$ \eqref{eq:HMFA} takes the form
	\begin{align*}
		\frac{\d}{\d t} z_{i,s}(t)=&\sum_{s' \in \mathcal{S}}q_{ss'} \left(\zeta_{i}(t)\right)z_{i,s'}(t)=\\
		&\sum_{s' \in \mathcal{S}}q_{ss'}^{(0)} z_{i,s'}(t)+\sum_{s' \in \mathcal{S}}\sum_{r \in \mathcal{S}}q_{ss',r}^{(1)} \zeta_{i,r}(t)z_{i,s'}(t)=\\
		&\sum_{s' \in \mathcal{S}}q_{ss'}^{(0)} z_{i,s'}(t)+\sum_{s' \in \mathcal{S}}\sum_{r \in \mathcal{S}}q_{ss',r}^{(1)} \left[\sum_{j=1}^{N} \underbrace{\frac{a_{ij}}{\bar{d}}}_{w_{ij}^{(m)}}z_{i,s'}(t)z_{j,r}(t) \right]\\
		\frac{\d}{\d t}\bar{z}_{k,s}(t)=& \sum_{s' \in \mathcal{S}}q_{ss'}^{(0)} \bar{z}_{k,s'}(t)+\sum_{s' \in \mathcal{S}}\sum_{r \in \mathcal{S}}q_{ss',r}^{(1)} \left[\frac{1}{\left|V_k \right|}\sum_{i \in V_k}\sum_{j=1}^{N} \frac{a_{ij}}{\bar{d}}z_{i,s'}(t)z_{j,r}(t) \right]
	\end{align*}
	Similarly,
	\begin{align*}
		\frac{\d}{\d t} v_{k,s}(t)=&\underbrace{\sum_{s' \in \mathcal{S}}q_{ss'}^{(0)} v_{k,s'}(t)+\sum_{s' \in \mathcal{S}}\sum_{r \in \mathcal{S}}q_{ss',r}^{(1)}\sum_{l=1}^{K}{\bar{w}_{kl}}v_{k,s'}(t)v_{l,r}(t)}_{=:f_{k,s}\left(V(t)\right)}
	\end{align*}
	where $V(t):=\left(v_{k,s}(t)\right)_{k \in [K], \ s \in \mathcal{S}}$ and $\overline{Z}(t)=\left(\bar{z}_{k,s}(t)\right)_{k \in [K], \ s \in \mathcal{S}}$ analogously.
	
	Next we show a Lipschitz-type inequality for $f_{k}=\left(f_{k,s} \right)_{s \in \mathcal{S}}.$ 
	\begin{align*}
		&\left|\bar{z}_{k,s'}(t)\sum_{l=1}^{K}\bar{w}_{kl}\bar{z}_{l,r}(t)	-v_{k,s'}(t)\sum_{l=1}^{K}\bar{w}_{kl}v_{l,r}(t) \right| \leq \\
		&\left|\bar{z}_{k,s'}(t)-v_{k,s'}(t) \right| \underbrace{\sum_{l=1}^{K}\bar{w}_{kl}\bar{z}_{l,r}(t)}_{\leq \sum_{k=1}^K\bar{w}_{kl} \leq \frac{2}{p_0 K}K=\frac{2}{p_0}}+\underbrace{v_{k,s'}(t)}_{\leq 1}\sum_{l=1}^{K}\underbrace{\bar{w}_{kl}}_{\leq \frac{2\kappa}{p_0} }\left|\bar{z}_{l,r}(t)-v_{l,r}(t)\right| \leq \\
		&\frac{2}{p_0}\left( \left|\bar{z}_{k,s'}(t)-v_{k,s'}(t) \right|+\kappa \sum_{l=1}^{K}\left|\bar{z}_{l,r}(t)-v_{l,r}(t) \right|\right),
	\end{align*}
	so
	\begin{align*} &\left|f_{k,s}\left(\bar{Z}(t) \right)-f_{k,s}\left(V(t) \right) \right|\leq 	
		q_{\max}\sum_{s' \in \mathcal{S}} \left|\bar{z}_{k,s'}(t)-v_{k,s'}(t) \right|+\\
		&\frac{2q_{\max}}{p_0}\sum_{s' \in \mathcal{S}}\sum_{r \in \mathcal{S}} \left( \left|\bar{z}_{k,s'}(t)-v_{k,s'}(t) \right|+\kappa \sum_{l=1}^{K}\left|\bar{z}_{l,r}(t)-v_{l,r}(t) \right|\right) = \\
		&q_{\max}\left(1+\frac{2 \left|\mathcal{S} \right|}{p_0} \right)\left \|\bar{z}_{k}(t)-v_{k}(t) \right \|_1+\frac{2q_{\max}\left|\mathcal{S} \right|}{p_0} \psi(t).
	\end{align*}
	Summation for $s\in \mathcal{S}$ results only in an extra $\mathcal{S}$ factor, so there exists a constant $L_f$ such that
	\begin{align}
		\label{eq:L_f}
		\left \|f_k \left(\overline{Z}(t) \right)-f_k \left(V(t)\right) \right \|_1 \leq L_f \left(\left \|\bar{z}_k(t)-v_k(t) \right \|_1+\psi(t) \right).
	\end{align}
	
	Next we look to replace the right hand side of $\frac{\d}{\d t} \bar{z}_{k,s}(t)$ with $f_{k,s}\left(\overline{Z}(t)\right).$ The corresponding error term is
	\begin{align}
		\label{eq:g}
		g_{k,s}(t):=\sum_{s' \in \mathcal{S}}\sum_{r \in \mathcal{S}}q_{ss',r}^{(1)} \left[\frac{1}{\left|V_k \right|}\sum_{i \in V_k}\sum_{j=1}^{N} \frac{a_{ij}}{\bar{d}}z_{i,s'}(t)z_{j,r}(t)-\sum_{l=1}^{K}{\bar{w}_{kl}}\bar{z}_{k,s'}(t)\bar{z}_{l,r}(t) \right],
	\end{align}
	and from $\frac{\d}{\d t} \bar{z}_{k}(t)=g_{k}(t)+f_k \left(\overline{Z}(t)\right)$, we have
	\begin{align*}
		\bar{z}_{k}(t)=& \bar{z}_{k}(0)+\int_{0}^{t}g_k(\tau)\d \tau+\int_{0}^{t}f_{k}\left(\overline{Z}(\tau)\right) \d \tau.
	\end{align*}
	
	Using $\bar{z}_k(0)=v_k(0)$, $\psi(t)$ can be bounded from above by
	\begin{align*}
		\psi(t)=&\kappa \sum_{k=1}^{K} \left \|\bar{z}_k(t) -v_k(t)\right \|_1 \leq\\
		&t \cdot \sup_{0 \leq \tau \leq t} \kappa \sum_{k=1}^{K} \left \|g_{k}(\tau) \right \|_1+\int_{0}^{t} \kappa \sum_{k=1}^{K} \left \|f_{k}\left(\overline{Z}(\tau) \right)-f_k \left(V(\tau)\right) \right \|_1 \d \tau \leq \\
		& t \cdot \sup_{0 \leq \tau \leq t} \kappa \sum_{k=1}^{K} \left \|g_{k}(\tau) \right \|_1+L_f\int_{0}^{t} \kappa \sum_{k=1}^{K} \left(\left \|\bar{z_k}(\tau)-v_k(\tau)\right \|_1+\psi(\tau) \right) \d \tau \leq \\
		&  t \cdot \sup_{0 \leq \tau \leq t} \kappa \sum_{k=1}^{K} \left \|g_{k}(\tau) \right \|_1+2L_{f} \int_{0}^{t}\psi(\tau) \d \tau,
	\end{align*}
	so from Grönwall's inequality,
	\begin{align*}
		\sup_{0 \leq t \leq T} \psi(t) \leq & \left(T \cdot \sup_{0 \leq t \leq T} \kappa \sum_{k=1}^{K} \left \|g_{k}(t) \right \|_1 \right) e^{2L_f T}.
	\end{align*}
	
	Therefore it is enough to show that $\sup_{0 \leq t \leq T} \kappa \sum_{k=1}^{K} \left \|g_{k}(t) \right \|_1 =O\left(\varepsilon'\right)$, and with an appropriate choice of $\varepsilon=C\varepsilon'$ we can conclude
	\begin{align*}
		&\sup_{0 \leq t \leq T} \left \|\bar{z}(t)-\bar{v}(t) \right \|_1 \leq  \varepsilon.
	\end{align*}
	
	\begin{align*}
		&\kappa \sum_{l=1}^K \left \|g_{k}(t) \right \|_{1} =\\
		& \kappa \sum_{s \in \mathcal{S}}\sum_{k=1}^K \left| \sum_{s',r \in \mathcal{S}}q_{ss',r}^{(1)} \left[\frac{1}{\left|V_k \right|}\sum_{i \in V_k}\sum_{j=1}^{N} \frac{a_{ij}}{\bar{d}}z_{i,s'}(t)z_{j,r}(t)-\sum_{l=1}^{K}{\bar{w}_{kl}}\bar{z}_{k,s'}(t)\bar{z}_{l,r}(t) \right] \right| \leq \\
		& \kappa q_{\max} \sum_{s,s',r\in \mathcal{S}}\sum_{k=1}^{K}\sum_{l=0}^{K} \left| \frac{1}{|V_k|}\sum_{i \in V_k}\sum_{j \in V_l} \frac{a_{ij}}{\bar{d}}z_{i,s'}(t)z_{j,r}(t)-\bar{w}_{kl}\bar{z}_{k,s'}(t)\bar{z}_{l,r}(t) \right|	
	\end{align*}
	
	$\sum_{s,s',r\in \mathcal{S}}(\dots)$ only contributes a factor of $\left |\mathcal{S} \right |^3$ which we can include in the constant factor along with $q_{\max}.$ The remaining terms are
	\begin{align}
		\label{eq:hard_work}
		\kappa \sum_{k=1}^{K}\sum_{l=0}^{K} \left| \frac{1}{|V_k|}\sum_{i \in V_k}\sum_{j \in V_l} \frac{a_{ij}}{\bar{d}}z_{i,s'}(t)z_{j,r}(t)-\bar{w}_{kl}\bar{z}_{k,s'}(t)\bar{z}_{l,r}(t) \right|.	
	\end{align}
	
	In the next step we shall get rid of the diagonal $(k,l)$ terms and also the terms with $l=0$. We have
	\begin{align*}
		& \frac{1}{|V_k|}\sum_{i \in V_k}\sum_{j \in V_l} \frac{a_{ij}}{\bar{d}}z_{i,s'}(t)z_{j,r}(t) \leq \frac{1}{\left|V_k \right| \bar{d}}\sum_{i \in V_k}\sum_{j \in V_k} 1=\frac{\left |V_l \right |}{\bar{d}} \leq \frac{\varepsilon'}{p} \leq \frac{2 \varepsilon'}{p_0},\\
		& \bar{w}_{kl}\bar{z}_{k,s'}(t)\bar{z}_{l,r}(t) \leq \frac{\kappa}{p} \leq \frac{2\varepsilon'}{p_0},
	\end{align*}
	so each term in the sum of \eqref{eq:hard_work} is $O \left(\varepsilon' \right).$ There are $O(K)$ pairs which are either diagonal or $l=0$, so their overall contribution to the sum is $O\left(\kappa K \varepsilon' \right)=O\left(\varepsilon' \right),$ hence we can neglect them and what we are left with is
	\begin{align}
		\label{eq:hard_work2}
		\kappa\ \sum_{(k,l) \in \mathcal{I}} \left| \frac{1}{|V_k|}\sum_{i \in V_k}\sum_{j \in V_l} \frac{a_{ij}}{\bar{d}}z_{i,s'}(t)z_{j,r}(t)-\bar{w}_{kl}\bar{z}_{k,s'}(t)\bar{z}_{l,r}(t) \right|.	
	\end{align}
	where $\mathcal{I}=\{(k,l) | k,l \in [K], k \neq l \}$.
	
	In order to have an upper bound for \eqref{eq:hard_work2} we want to use the properties of the $\varepsilon'$-regular partition. However, Szemer\'edi's regularity lemma uses subsets of $[N]$, or in other words, $0-1$ valued indicators of vertices compared to $z_{i,s}(t)$ which may take any value from $[0,1].$
	
	To account for this problem, we introduce $N$ independent homogeneous Markov processes taking values from $\mathcal{S}$. Each process makes Markov transitions according to the transition rate matrix $Q \left(\zeta_{i}(t)\right)$ and its initial distribution is given by $\left(z_{i,s}(0)\right)_{s \in \mathcal{S}}.$ Let $\eta_{i,s}(t)$ be an indicator of the $i$'th such process is at state $s$ at time $t$. We also apply the notations
	\begin{align*}
		\eta_{i}(t)=& \left(\eta_{i,s}(t)\right)_{s \in \mathcal{S}},\\
		\bar{\eta}_{k}(t):=& \frac{1}{\left |V_k \right |}\sum_{i \in V_k}\eta_{i}(t).
	\end{align*}
	
	It is easy to see that $\E \left(\eta_{i}(t)\right)=z_{i}(t).$ Also, since $i \in V_k$ and $j \in V_l$, $i$ and $j$ are different for $k \neq l$, hence the corresponding processes are independent, so
	\begin{align*}
		z_{i,s'}(t)z_{j,r}(t)=&\E \left(\eta_{i,s'}(t)\eta_{j,k}(t)\right),\\
		\bar{z}_{k,s'}(t)\bar{z}_{l,r}(t)=& \E \left(\bar{\eta}_{k,s'}(t)\bar{\eta}_{l,r}(t)\right).
	\end{align*}
	Therefore, \eqref{eq:hard_work2} can be bounded from above by
	\begin{align}
		\label{eq:hard_work3}
		\E \left[\kappa\ \sum_{(k,l) \in \mathcal{I}} \left| \frac{1}{|V_k|}\sum_{i \in V_k}\sum_{j \in V_l} \frac{a_{ij}}{\bar{d}}\eta_{i,s'}(t)\eta_{j,r}(t)-\bar{w}_{kl}\bar{\eta}_{k,s'}(t)\bar{\eta}_{l,r}(t) \right| \right].	
	\end{align}
	
	The upper bound we aim to obtain does not depend on the artificial randomness just introduced, hence the expectation is ignored.
	
	We make some algebraic manipulation to end up with edge densities needed for Szemer\'edi's regularity lemma. We use the notation
	\begin{align*}
		V_{k,s}(t):=\left\{\left. i \in V_k \right| \eta_{i,s}(t)=1\right\}.
	\end{align*}
	
	Then
	\begin{align*}
		& \frac{1}{|V_k|}\sum_{i \in V_k}\sum_{j \in V_l} \frac{a_{ij}}{\bar{d}}\eta_{i,s'}(t)\eta_{j,r}(t)=\frac{1}{\left |V_k \right | \bar{d}}e \left(V_{k,s'}(t),V_{l,r}(t) \right)=\\
		& \frac{\left |V_l \right |}{ \bar{d}}\rho \left(V_{k,s'}(t),V_{l,r}(t) \right) \frac{\left| V_{k,s'}(t)\right|}{\left|V_k \right|}\frac{\left| V_{l,r}(t)\right|}{\left|V_l \right|}=\frac{\kappa}{p}\rho \left(V_{k,s'}(t),V_{l,r}(t) \right) \bar{\eta}_{k,s'}(t)\bar{\eta}_{l,k}(t).
	\end{align*}
	By recalling \eqref{eq:w_bar}, the inside of \eqref{eq:hard_work3} can be rewritten as
	\begin{align}
		\label{eq:hard_work4}
		\frac{\kappa^2}{p} \sum_{(k,l) \in \mathcal{I}} \left| \rho\left(V_{k,s'}(t),V_{l,r}(t)\right)-\rho\left(V_{k},V_{l}\right)  \right| \bar{\eta}_{k,s'}(t)\bar{\eta}_{l,r}(t).
	\end{align}
	Note that the summands of \eqref{eq:hard_work4} are $O(1)$. 
	
	Using Szemer\'edi's lemma to \eqref{eq:hard_work4} is relatively straightforward from now on. We still have to deal with non-$\varepsilon'$-regular $k,l$ pairs, and pairs where either $\left|V_{k,s'}(t) \right| \leq \varepsilon' \left|V_k \right|$ or $\left|V_{l,r}(t) \right| \leq \varepsilon' \left|V_l \right|$. The former set of pairs are denoted by $\mathcal{I}_1$ and the latter by $\mathcal{I}_2$, and $\mathcal{I}_3:=\mathcal{I}\setminus \left(\mathcal{I}_1 \cup \mathcal{I}_2\right)$ denotes the non-problematic pairs.
	
	Then from $\left|\mathcal{I}_1 \right| \leq \varepsilon' {K \choose 2} \leq \varepsilon' K^2$ we have
	\begin{align*}
		\frac{\kappa^2}{p} \sum_{(k,l) \in \mathcal{I}_1} \left| \rho\left(V_{k,s'}(t),V_{l,r}(t)\right)-\rho\left(V_{k},V_{l}\right)  \right| \bar{\eta}_{k,s'}(t)\bar{\eta}_{l,r}(t)=O \left(\varepsilon'\kappa^2 K^2 \right)=O\left(\varepsilon'\right).
	\end{align*}
	
	$(k,l) \in \mathcal{I}_2$ is equivalent with $\bar{\eta}_{k,s'}(t) \leq \varepsilon'$ or $\bar{\eta}_{l,k}(t) \leq \varepsilon'$, yielding
	\begin{align*}
		& \frac{\kappa^2}{p} \sum_{(k,l) \in \mathcal{I}_2} \left| \rho\left(V_{k,s'}(t),V_{l,r}(t)\right)-\rho\left(V_{k},V_{l}\right)  \right| \bar{\eta}_{k,s'}(t)\bar{\eta}_{l,r}(t)\leq \\
		& \frac{\varepsilon' \kappa^2}{p} \sum_{(k,l) \in \mathcal{I}_2} 1=O \left(\varepsilon' \kappa^2 K^2\right)=O\left(\varepsilon'\right).
	\end{align*}
	
	Finally, $(k,l) \in \mathcal{I}_{3}$ gives
	\begin{align*}
		& \left| \rho\left(V_{k,s'}(t),V_{l,r}(t)\right)-\rho\left(V_{k},V_{l}\right)  \right| < \varepsilon' \Rightarrow \\
		& \frac{\kappa^2}{p} \sum_{(k,l) \in \mathcal{I}_3} \left| \rho\left(V_{k,s'}(t),V_{l,r}(t)\right)-\rho\left(V_{k},V_{l}\right)  \right| \bar{\eta}_{k,s'}(t)\bar{\eta}_{l,r}(t)\leq \\
		& \frac{\varepsilon' \kappa^2}{p} \sum_{(k,l) \in \mathcal{I}_2} 1 =O \left(\varepsilon' \kappa^2 K^2\right)=O\left(\varepsilon'\right).
	\end{align*}

	This ensures that $\sup_{0 \leq t \leq T} \kappa \sum_{k=1}^{K} \left \|g_k(t) \right \|_1=O\left( \varepsilon' \right)$ indeed holds, concluding the proof of Theorem \ref{t:Szemeredi}.
	
	\qed
	
	\bibliographystyle{abbrv}
	\bibliography{mf}

\begin{thebibliography}{10}

\bibitem{SzemerediAlgorithm}
N.~Alon, R.~A. Duke, H.~Lefmann, V.~R{\"o}dl, and R.~Yuster.
\newblock The algorithmic aspects of the regularity lemma.
\newblock {\em Proceedings, 33rd Annual Symposium on Foundations of Computer
  Science}, pages 473--481, 1992.

\bibitem{Bakhshi2010a}
R.~Bakhshi, L.~Cloth, W.~Fokkink, and B.~R. Haverkort.
\newblock {Mean-field framework for performance evaluation of push-pull gossip
  protocols}.
\newblock {\em Performance Evaluation}, 68(2):157--179, Feb. 2011.

\bibitem{simplicial4}
{\'A}.~Bod{\'o}, G.~Y. Katona, and P.~L. Simon.
\newblock {SIS} epidemic propagation on hypergraphs.
\newblock {\em Bulletin of Mathematical Biology}, 78:713--735, 2016.

\bibitem{SISpartition}
S.~Bonaccorsi, S.~Ottaviano, D.~Mugnolo, and F.~De~Pellegrini.
\newblock Epidemic outbreaks in networks with equitable or almost-equitable
  partitions.
\newblock {\em SIAM Journal on Applied Mathematics}, 75:2421--2443, 11 2015.

\bibitem{Bruneo2011}
D.~Bruneo, M.~Scarpa, A.~Bobbio, D.~Cerotti, and M.~Gribaudo.
\newblock {Markovian agent modeling swarm intelligence algorithms in wireless
  sensor networks}.
\newblock {\em Performance Evaluation}, 2011.

\bibitem{Caravagna2011}
G.~Caravagna.
\newblock {\em {Formal Modeling and Simulation of Biological Systems with
  Delays}}.
\newblock {Ph. D.} thesis, Universit\`{a} di Pisa, 2011.

\bibitem{simplicial1}
P.~Cisneros-Velarde and F.~Bullo.
\newblock Multi-group {SIS} epidemics with simplicial and higher-order
  interactions, 2020.
\newblock https://arxiv.org/abs/2005.11404.

\bibitem{annealed_formula}
G.~F. {de Arruda}, F.~A. Rodrigues, and Y.~Moreno.
\newblock Fundamentals of spreading processes in single and multilayer complex
  networks.
\newblock {\em Physics Reports}, 756:1--59, 2018.

\bibitem{unified_meanfield}
K.~Devriendt and P.~Van~Mieghem.
\newblock Unified mean-field framework for susceptible-infected-susceptible
  epidemics on networks, based on graph partitioning and the isoperimetric
  inequality.
\newblock {\em Physical Review E}, 96:052314, Nov 2017.

\bibitem{Glauber_original}
R.~J. Glauber.
\newblock Time‐dependent statistics of the {Ising} model.
\newblock {\em Journal of Mathematical Physics}, 4(2):294--307, 1963.

\bibitem{annealed}
B.~Guerra and J.~Gómez-Gardeñes.
\newblock Annealed and mean-field formulations of disease dynamics on static
  and adaptive networks.
\newblock {\em Physical Review E, Statistical, nonlinear, and soft matter
  physics}, 82:035101, 09 2010.

\bibitem{hht2014}
R.~A. Hayden, I.~Horv{\'a}th, and M.~Telek.
\newblock Mean field for performance models with generally-distributed timed
  transitions.
\newblock In G.~Norman and W.~Sanders, editors, {\em Quantitative Evaluation of
  Systems}, volume 8657 of {\em Lecture Notes in Computer Science}, pages
  90--105. Springer International Publishing, 2014.

\bibitem{simplicial2}
I.~Iacopini, G.~Petri, A.~Barrat, and V.~Latora.
\newblock Simplicial models of social contagion.
\newblock {\em Nature Communications}, 10:2485, 06 2019.

\bibitem{simplicial3}
B.~Jhun, M.~Jo, and B.~Kahng.
\newblock Simplicial {SIS} model in scale-free uniform hypergraph.
\newblock {\em arXiv}, 2019.
\newblock https://arxiv.org/abs/1910.00375.

\bibitem{Simon_book}
I.~Kiss, J.~Miller, and P.~Simon.
\newblock {\em Mathematics of Epidemics on Networks}, volume~46.
\newblock Springer, 01 2017.

\bibitem{kurtz70}
T.~Kurtz.
\newblock Solutions of ordinary differential equations as limits of pure jump
  {M}arkov processes.
\newblock {\em Journal of Applied Probability}, 7:49--58, 04 1970.

\bibitem{kurtz78}
T.~G. Kurtz.
\newblock Strong approximation theorems for density dependent {M}arkov chains.
\newblock {\em Stochastic Processes and their Applications}, 6(3):223 -- 240,
  1978.

\bibitem{NIMFA_accuracy}
C.~Li, R.~van~de Bovenkamp, and P.~Van~Mieghem.
\newblock Susceptible-infected-susceptible model: A comparison of
  $n$-intertwined and heterogeneous mean-field approximations.
\newblock {\em Physical Review E}, 86:026116, Aug 2012.

\bibitem{NIMFA2011}
P.~Mieghem.
\newblock {The N-intertwined SIS epidemic network model}.
\newblock {\em Computing}, 93:147--169, 12 2011.

\bibitem{configuration}
M.~Molloy and B.~Reed.
\newblock A critical point for random graphs with a given degree sequence.
\newblock {\em Random Structures \& Algorithms}, 6(2‐3):161--180, 1995.

\bibitem{voter_majority}
J.~Noonan and R.~Lambiotte.
\newblock Dynamics of majority rule on hypergraphs.
\newblock {\em Physical Review E}, 104:024316, Aug 2021.

\bibitem{Glauber_ODE}
M.~Ostilli and F.~Mukhamedov.
\newblock Continuous- and discrete-time {G}lauber dynamics. {F}irst- and
  second-order phase transitions in mean-field {P}otts models.
\newblock {\em {EPL} (Europhysics Letters)}, 101(6):60008, 2013.

\bibitem{voter_adaptive}
N.~Papanikolaou, G.~Vaccario, E.~Hormann, R.~Lambiotte, and F.~Schweitzer.
\newblock Consensus from group interactions: An adaptive voter model on
  hypergraphs.
\newblock {\em Physical Review E}, 105:054307, May 2022.

\bibitem{age_structured}
R.~Parasnis, R.~Kato, A.~Sakhale, M.~Franceschetti, and B.~Touri.
\newblock Usefulness of the age-structured {SIR} dynamics in modelling
  {COVID-19}, 2022.
\newblock https://arxiv.org/abs/2203.05111.

\bibitem{vesp}
R.~Pastor-Satorras and A.~Vespignani.
\newblock Epidemic spreading in scale-free networks.
\newblock {\em Physical Review Letters}, 86:3200--3203, Apr 2001.

\bibitem{activity1}
N.~Perra, B.~Gonçalves, R.~Pastor-Satorras, and A.~Vespignani.
\newblock Activity driven modeling of time varying networks.
\newblock {\em Scientific reports}, 2, 03 2012.

\bibitem{Schlicht2008}
R.~Schlicht and G.~Winkler.
\newblock {A delay stochastic process with applications in molecular biology}.
\newblock {\em Journal of Mathematical Biology}, 57(5):613--48, Nov. 2008.

\bibitem{NIMFA_accuracy2}
D.~H. Silva, S.~C. Ferreira, W.~Cota, R.~Pastor-Satorras, and C.~Castellano.
\newblock Spectral properties and the accuracy of mean-field approaches for
  epidemics on correlated power-law networks.
\newblock {\em Physical Review Research}, 1:033024, Oct 2019.

\bibitem{simon2017NIMFA}
P.~L. Simon and I.~Z. Kiss.
\newblock On bounding exact models of epidemic spread on networks, 2017.
\newblock https://arxiv.org/abs/1704.01726.

\bibitem{Sridhar_Kar1}
A.~Sridhar and S.~Kar.
\newblock Mean-field approximation for stochastic population processes in
  networks under imperfect information, 01 2021.
\newblock https://arxiv.org/abs/2101.09644.

\bibitem{Sridhar_Kar2}
A.~Sridhar and S.~Kar.
\newblock On the accuracy of deterministic models for viral spread on networks.
\newblock {\em ArXiv}, 2021.
\newblock https://arxiv.org/abs/2104.04913.

\bibitem{Szemeredi}
E.~Szemeredi.
\newblock Regular partitions of graphs.
\newblock {\em Proceedings of the {C.N.R.S.} International Colloquium}, 260:11,
  04 1975.

\bibitem{activity2}
L.~Zino, A.~Rizzo, and M.~Porfiri.
\newblock {An analytical framework for the study of epidemic models on activity
  driven networks}.
\newblock {\em Journal of Complex Networks}, 5(6):924--952, 11 2017.

\end{thebibliography}

\end{document}